\documentclass{amsart}
\usepackage{amssymb, amstext, amscd, amsmath}
\usepackage[all,cmtip]{xy}
\usepackage{enumitem}
\usepackage{xcolor}
\usepackage{tikz}
\usepackage{tikz-cd}
\usepackage{comment}

\usepackage[nobysame,alphabetic,lite]{amsrefs}

\usepackage[
pdftitle={RDP Fell bundles},
pdfauthor={Alcides Buss, Pradyut Karmakar},
pdfsubject={Mathematics},
colorlinks=true,
linkcolor=blue,
citecolor=blue,
urlcolor=blue
]{hyperref}

\renewcommand*{\MR}[1]{\href{https://mathscinet.ams.org/mathscinet-getitem?mr=#1}{MR #1}}
\newcommand*{\arxiv}[1]{\href{https://arxiv.org/abs/#1}{arXiv:#1}}

\usepackage{verbatim}

\theoremstyle{plain}
\newtheorem{theorem}{Theorem}[section]
\newtheorem{corollary}[theorem]{Corollary}
\newtheorem{proposition}[theorem]{Proposition}
\newtheorem{lemma}[theorem]{Lemma}
\theoremstyle{definition}
\newtheorem{remark}[theorem]{Remark}
\newtheorem{example}[theorem]{Example}

\newtheorem{definition}[theorem]{Definition}

\theoremstyle{plain}

\newcommand{\bC}{{\mathbb{C}}}

\newcommand{\bF}{{\mathbb{F}}}

\newcommand{\bR}{{\mathbb{R}}}

\renewcommand{\phi}{\varphi}
\newcommand{\upchi}{{\raise.35ex\hbox{\ensuremath{\chi}}}}

\newcommand{\Aut}{\operatorname{Aut}}

\newcommand{\supp}{\operatorname{supp}}

\title{Rapid decay and localizability for Fell bundles over \'etale Groupoids}
\author[Alcides Buss]{Alcides Buss}
\address[Alcides Buss]{Departamento de Matem\'atica, Universidade Federal de Santa Catarina, 88.040-900
Florian\'opolis-SC, Brazil}
	\email{alcides.buss@ufsc.br}
	\urladdr{http://mtm.ufsc.br/~alcides/}

\author{Pradyut Karmakar}
\address[Pradyut Karmakar]{Sam Houston State University, 332 G LDB, 1900 Avenue I, Huntsville, Texas, USA}
\email{karmakar.pradyut@gmail.com}

\begin{document}

\subjclass[2020]{Primary 46L55, 22A22; Secondary 46L05, 37B05}
\keywords{Rapid decay property, Fell bundles, \'etale groupoids, reduced $C^*$-algebras, partial actions, localizability, Deaconu-Renault groupoids}

\begin{abstract}
    We introduce a notion of the Rapid Decay Property (RDP) for Fell bundles over locally compact Hausdorff \'etale groupoids, extending earlier rapid decay theories for \'etale groupoids and twists. Our approach yields analytic control on convolution norms and leads to the existence of dense Schwartz-type $*$-subalgebras of the reduced cross-sectional $C^*$-algebra $C_r^*(E)$. As an application, we obtain approximation results showing that, under suitable hypotheses, sections of $C_r^*(E)$ with support contained in an open subset $U\subseteq G$ can be approximated in the reduced norm by compactly supported sections supported inside $U$. In this sense, the Rapid Decay Property provides an analytic mechanism leading to a form of \emph{localizability} for Fell bundles.
    
    We also investigate the relationship between RDP, polynomial growth, and dynamical systems. We show that Fell bundles over groupoids with polynomial growth naturally satisfy the RDP. Furthermore, for a transformation groupoid $G=\Gamma\ltimes_\theta X$ associated with a partial action, we prove that RDP for a Fell bundle over $G$ is equivalent to RDP for a naturally associated Fell bundle over the discrete group $\Gamma$. Finally, we apply these tools to Deaconu-Renault groupoids. By realizing them as partial crossed products of free groups, we show that the presence of persistent branching forces exponential growth, completely obstructing the RDP. This provides a striking illustration of a system where the acting group has RDP, but the associated groupoid fails to inherit it, fully clarifying the boundary between the group and groupoid theories.
\end{abstract}

\maketitle

\tableofcontents

\section{Introduction}

Approximation and decay properties play a fundamental role in the analysis of $C^*$-algebras associated with groups and groupoids. Among these, the Rapid Decay Property (RDP), introduced by Haagerup and Jolissaint for discrete groups, provides analytic control over convolution norms and allows one to construct dense Fr\'echet $*$-subalgebras of reduced group $C^*$-algebras. This property has proved extremely useful in operator algebras, geometric group theory, and noncommutative geometry.

Rapid decay phenomena beyond the group case have already been studied in several related settings. For \'etale groupoids, a version of property RD was introduced by Hou \cite{Hou2017}. For twists over \'etale groupoids, a twisted version was later developed by Weygandt \cite{Weygandt2024}, generalizing both Hou's groupoid setting and the earlier twisted group case appearing in the appendix by Chatterji to Mathai's work on twisted group algebras \cite{Mathai2006}. More recently, Fuller and Karmakar \cite{FullerKarmakar2024} used rapid decay methods in the study of Fourier coefficients and local approximation questions for reduced groupoid $C^*$-algebras. Further developments include the work of Austad, Ortega, and Palmstr{\o}m on property $RD_p$ and applications to $K$-theory \cite{AustadOrtegaPalmstrom2025}, as well as recent work of Stoiber on spectral continuity for \'etale groupoids with property RD \cite{Stoiber2025}.

The goal of the present paper is to extend this analytic framework to \emph{Fell bundles} over \'etale groupoids. Given a Fell bundle $E\to G$, we introduce Sobolev-type norms on compactly supported sections of $E$ and formulate a version of the Rapid Decay Property which controls the reduced cross-sectional norm in terms of these Sobolev norms. This provides a natural common generalization of the classical Rapid Decay Property for groups, the groupoid version of Hou, and the twisted groupoid setting studied in the literature.

One of the main consequences of RDP in this context is the existence of dense Schwartz-type $*$-subalgebras of the reduced cross-sectional algebra $C_r^*(E)$. These algebras behave well with respect to convolution and involution and provide analytic control over the reduced norm. Such smooth dense subalgebras play an important role in noncommutative geometry and in the study of approximation properties of operator algebras.

Our motivation for studying this property also comes from the notion of \emph{localizability} for Fell bundles introduced by Resende in the quantale framework \cite{Resende2017}. In that setting, localizability expresses a compatibility between the topology of the groupoid and the algebraic structure of a compatible completion of $C_c(E)$. From the analytic point of view adopted in this paper, a natural formulation of this phenomenon can be expressed directly in terms of the reduced $C^*$-algebra: if $U\subseteq G$ is open and $f\in C_r^*(E)$ satisfies $\supp(f)\subseteq U$, one would like to approximate $f$ in the reduced norm by compactly supported sections whose support is contained in $U$. This type of approximation property may be viewed as an analytic form of localizability for the reduced cross-sectional algebra.

A central theme of this paper is that the Rapid Decay Property provides a natural mechanism for obtaining such approximation results. Using positive definite multipliers and Schwartz-type regularization arguments, we show that, under suitable hypotheses, RDP implies a form of localizability for the reduced cross-sectional algebra of a Fell bundle. This connects the analytic theory developed here with the approximation and localizability phenomena studied in recent work of Pacheco \cite{PachecoThesis,Pacheco2024}.

Another central goal of this work is to understand the subtle interplay between geometric growth, dynamical systems, and the RDP. While Rapid Decay is a robust property for many discrete groups, its behavior in the groupoid setting is significantly more rigid. For instance, Weygandt \cite{Weygandt2024} recently showed that for principal \'etale groupoids, RDP is strictly equivalent to polynomial growth. We investigate this dichotomy in two main directions. 

First, we establish that any Fell bundle over a groupoid with polynomial growth naturally satisfies the RDP. Conversely, we completely characterize the RDP for the fundamental class of Deaconu-Renault groupoids. We prove that for covering maps of degree $d \geq 2$, the persistent branching of the dynamics forces the groupoid to have exponential growth, which completely obstructs the Rapid Decay Property. 

Second, we study Fell bundles associated with partial actions of discrete groups. We prove a reduction theorem showing that RDP for a Fell bundle over a partial transformation groupoid $G = \Gamma \ltimes_\theta X$ is equivalent to RDP for an associated Fell bundle over the acting group $\Gamma$. By realizing Deaconu-Renault groupoids as partial crossed products of free groups via Steinberg's model \cite{Steinberg2026}, we provide a striking application of this reduction: we exhibit natural dynamical systems where the acting group (a free group $F_d$) satisfies the RDP, but the exponential branching of the partial action space prevents the corresponding groupoid from inheriting it. This resolves a delicate boundary between group and groupoid Rapid Decay theories.

\medskip

The main contributions of this paper can be summarized as follows:

\begin{enumerate}
\item We formulate the Rapid Decay Property for Fell bundles over \'etale groupoids using Sobolev-type norms, and we use it to construct Schwartz-type dense $*$-subalgebras of the reduced cross-sectional algebra $C_r^*(E)$.

\item We show that using positive definite multiplier techniques and negative type functions, Rapid Decay implies analytic forms of localizability for the reduced cross-sectional algebra.

\item We establish that every Fell bundles over a groupoid with polynomial growth satisfies the RDP, and we prove that persistent branching in Deaconu-Renault groupoids yields exponential growth, completely obstructing the property.

\item We prove a reduction theorem for partial actions, showing that RDP for a partial transformation groupoid is equivalent to RDP for an associated Fell bundle over the acting group, and we apply this to Steinberg's free group model to clarify the limitations of RDP in non-injective dynamics.
\end{enumerate}

\medskip

The paper is organized as follows.

In Section~\ref{sec:preliminaries} we recall the necessary background on Fell bundles, \'etale groupoids, and localizability.

In Section~\ref{sec:RD-Fell-bundles} we introduce the Rapid Decay Property for Fell bundles and establish basic analytic estimates, including the construction of Schwartz-type subalgebras.

In Section~\ref{sec:Fell-bundles-poly-growth} we explore the deep connection between RDP and the geometric growth of the groupoid. Extending classical results, we show that Fell bundles over groupoids with polynomial growth satisfy the RDP. Conversely, we completely characterize the RDP for the fundamental class of Deaconu-Renault groupoids, proving that persistent branching obstructs the property (Proposition~\ref{prop:ObstructionRDP}).

In Section~\ref{sec:continuity-locality} we establish fundamental continuity and locality properties of the $l^2$-norms. These estimates provide the essential analytic tools required for our subsequent approximation and localizability results.

In Section~\ref{sec:partial-actions-reduction} we study the behavior of RDP for Fell bundles associated with partial actions. We show that RDP for a partial transformation groupoid reduces to the group case, and we apply this to Steinberg's free group model to clarify the limitations of RDP in non-injective dynamics.

In Section~\ref{sec:group-actions} we analyze Fell bundles arising from global actions of groups on $C^*$-algebras. We establish necessary conditions for RDP in this setting and prove that Rapid Decay holds for trivial actions on commutative and finite-dimensional algebras.

Finally, in Section~\ref{sec:localizability} we present our main analytic application. We show how Rapid Decay, combined with positive definite multiplier techniques and the Haagerup property, yields local approximation results. In particular, we prove that under suitable exactness and RDP assumptions, sections of the reduced $C^*$-algebra supported in an open subset can be approximated by continuous sections with compact support in the same set, establishing an analytic form of localizability.

\subsection*{Acknowledgements}
The first author was supported by CNPq and FAPESC.

\section{Preliminaries}\label{sec:preliminaries}

Throughout the paper, $G$ denotes a locally compact Hausdorff \'etale groupoid
with unit space $G^{(0)}$.
We write
\[
s,r\colon G\to G^{(0)}
\]
for the source and range maps.
For $x\in G^{(0)}$, we use the standard notation
\[
G_x:=s^{-1}(x),
\qquad
G^x:=r^{-1}(x).
\]

Let $E=\{E_\gamma\}_{\gamma\in G}$ be a saturated Fell bundle over $G$.
We denote by $C_c(E)$ the $*$-algebra of compactly supported continuous
sections of $E$, endowed with the usual convolution and involution, and by
\[
C_r^*(E)
\]
its reduced cross-sectional $C^*$-algebra.
If $U\subseteq G$ is open, we write $E|_U$ for the restricted bundle over $U$
and $C_c(E|_U)$ for the compactly supported continuous sections of this
restriction.

We shall freely view $C_c(E)$ as a subspace of the space of sections of the
bundle, and for $f\in C_c(E)$ we write
\[
\supp(f):=\{\gamma\in G: f(\gamma)\neq 0\}.
\]
More generally, whenever $C_r^*(E)$ is realized as a subspace of $C_0(E)$, we
use the same notation for the support of elements of $C_r^*(E)$.

A \emph{length function} on $G$ is a continuous map
\[
L\colon G\to [0,\infty)
\]
such that:
\begin{enumerate}[label=\textup{(\roman*)}]
\item $L(\gamma^{-1})=L(\gamma)$ for all $\gamma\in G$;
\item $L(x)=0$ for all $x\in G^{(0)}$;
\item $L(\gamma\eta)\leq L(\gamma)+L(\eta)$ whenever $s(\gamma)=r(\eta)$.
\end{enumerate}

When $\Gamma$ is a discrete group, we write $\ell\colon \Gamma\to [0,\infty)$
for a length function on $\Gamma$, reserving the notation $L$ for length
functions on groupoids.


\section{Rapid Decay for Fell Bundles}\label{sec:RD-Fell-bundles}

Let $G$ be a locally compact Hausdorff \'etale groupoid with unit space
$G^{(0)}$, and let $L\colon G\to [0,\infty)$ be a length function, that is,
a continuous map such that
\begin{enumerate}[label=\textup{(\roman*)}]
\item $L(\gamma^{-1})=L(\gamma)$ for all $\gamma\in G$;
\item $L(x)=0$ for all $x\in G^{(0)}$;
\item $L(\gamma\eta)\leq L(\gamma)+L(\eta)$ whenever $(\gamma,\eta)\in G^{(2)}$.
\end{enumerate}
For $x\in G^{(0)}$ we write
\[
G_x=s^{-1}(x),
\qquad
G^x=r^{-1}(x).
\]

We first recall the scalar-valued version.
For $f\in C_c(G)$ and $p\in \mathbb Z_+$ a non-negative integer, define
\begin{align*}
\|f\|_{2,p,s,L}
&:=
\sup_{x\in G^{(0)}}
\Big(
\sum_{\gamma\in G_x}|f(\gamma)|^2(1+L(\gamma))^{2p}
\Big)^{1/2},\\
\|f\|_{2,p,r,L}
&:=
\sup_{x\in G^{(0)}}
\Big(
\sum_{\gamma\in G^x}|f(\gamma)|^2(1+L(\gamma))^{2p}
\Big)^{1/2},
\end{align*}
and
\[
\|f\|_{2,p,L}:=\max\{\|f\|_{2,p,s,L},\|f\|_{2,p,r,L}\}.
\]
We say that $G$ has the \emph{Rapid Decay Property} with respect to $L$ if there
exist constants $C>0$ and $p> 0$(positive integer) such that
\[
\|f\|_r\leq C\|f\|_{2,p,L}
\qquad\text{for all }f\in C_c(G).
\]

We now extend this definition to Fell bundles.

Let $E=\{E_\gamma\}_{\gamma\in G}$ be a Fell bundle over $G$.
We do \emph{not} assume that $E$ is saturated.
Write $C_c(E)$ for the $*$-algebra of compactly supported continuous sections,
and $C_r^*(E)$ for its reduced cross-sectional $C^*$-algebra.

For $f\in C_c(E)$ define
\begin{align*}
\|f\|_{2,p,s,L}
&:=
\sup_{x\in G^{(0)}}
\Big\|
\sum_{\gamma\in G_x} f(\gamma)^*f(\gamma)(1+L(\gamma))^{2p}
\Big\|^{1/2},\\
\|f\|_{2,p,r,L}
&:=
\sup_{x\in G^{(0)}}
\Big\|
\sum_{\gamma\in G^x} f(\gamma)f(\gamma)^*(1+L(\gamma))^{2p}
\Big\|^{1/2},
\end{align*}
and
\[
\|f\|_{2,p,L}:=\max\{\|f\|_{2,p,s,L},\|f\|_{2,p,r,L}\}.
\]

For $p=0$ we also write
\[
\|f\|_{2,s}:=\|f\|_{2,0,s,L},
\qquad
\|f\|_{2,r}:=\|f\|_{2,0,r,L},
\]
\[
\|f\|_{II}:=\|f\|_{2,0,L}=\max\{\|f\|_{2,s},\|f\|_{2,r}\}.
\]
Because $L$ is symmetric, one has
\[
\|f\|_{2,p,r,L}=\|f^*\|_{2,p,s,L},
\qquad
\|f\|_{2,p,L}=\|f^*\|_{2,p,L}.
\]

If $p\geq 0$, let $H^{2,p,L}(E)$ denote the completion of $C_c(E)$ with respect
to the norm $\|\cdot\|_{2,p,L}$.
We then define the associated Schwartz-type space by
\[
H^{2,L}(E):=\bigcap_{p\in \mathbb Z_+} H^{2,p,L}(E)\cap C_0(G,E),
\]
equipped with the Fr\'echet topology given by the seminorms
$\|\cdot\|_{2,p,L}$, $p\geq 0$.

\begin{definition}
We say that the Fell bundle $E$ has the \emph{Rapid Decay Property} with
respect to $L$ if there exist constants $C>0$ and $p\in \mathbb Z_+$ such that
\[
\|f\|_r\leq C\|f\|_{2,p,L}
\qquad\text{for all }f\in C_c(E).
\]
\end{definition}

\begin{remark}
When $E=\bC\times G$ is the trivial line bundle, this recovers the usual Rapid
Decay Property for the groupoid $G$.
More generally, for any Fell line bundle over $G$, the above definition agrees
with the twisted groupoid versions appearing in the literature.
\end{remark}

\begin{remark}
For later use, let us also record the $I$-norm
\[
\|f\|_{I}
:=
\max\Big\{
\sup_{x\in G^{(0)}}\sum_{\gamma\in G^x}\|f(\gamma)\|,
\;
\sup_{x\in G^{(0)}}\sum_{\gamma\in G_x}\|f(\gamma)\|
\Big\}.
\]
For every $f\in C_c(E)$ one has
\begin{equation}\label{eq:norm-inequalities}
\|f\|_\infty\leq \|f\|_{II}\leq \|f\|_r\leq \|f\|_{I}.
\end{equation}
\end{remark}

\begin{proposition}\label{prop:RD-gives-continuous-inclusion}
Assume that $E$ has the Rapid Decay Property with respect to $L$.
Then the identity map on $C_c(E)$ extends uniquely to a continuous linear map
\[
\iota\colon H^{2,L}(E)\longrightarrow C_r^*(E).
\]
In particular, we may regard $H^{2,L}(E)$ as a dense subspace of $C_r^*(E)$.
\end{proposition}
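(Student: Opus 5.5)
Our plan is the standard extension-by-continuity argument, with the Fréchet topology on $H^{2,L}(E)$ as the domain.

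\emph{Step 1: continuity on the dense subspace.} By the Rapid Decay Property there are $C>0$ and $p_0\in\mathbb Z_+$ with $\|f\|_r\le C\|f\|_{2,p_0,L}$ for all $f\in C_c(E)$. Since $1+L\ge 1$, we have $(1+L(\gamma))^{2p}\le(1+L(\gamma))^{2q}$ for $p\le q$. Summing the resulting operator inequalities in the fibres, and using that norms are monotone on positive elements, gives $\|f\|_{2,p,L}\le\|f\|_{2,q,L}$. So the seminorms increase in $p$, and one seminorm $\|\cdot\|_{2,p_0,L}$ already dominates $\|\cdot\|_r$ on $C_c(E)$. Hence the identity $C_c(E)\to C_c(E)\subseteq C_r^*(E)$ is uniformly continuous from $(C_c(E),\|\cdot\|_{2,p_0,L})$ into the complete space $C_r^*(E)$. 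It therefore extends uniquely to a bounded map $\iota_{p_0}\colon H^{2,p_0,L}(E)\to C_r^*(E)$ with $\|\iota_{p_0}(\xi)\|_r\le C\|\xi\|_{2,p_0,L}$.

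\emph{Step 2: definition of $\iota$.} Each element of $H^{2,L}(E)$ lies in $H^{2,p_0,L}(E)$. To make sense of the intersection, we regard all the completions as sitting inside a common ambient space of sections; since $\|\cdot\|_\infty\le\|\cdot\|_{II}\le\|\cdot\|_{2,p,L}$, Cauchy sequences converge uniformly to sections. We then set $\iota:=\iota_{p_0}|_{H^{2,L}(E)}$. Continuity for the Fréchet topology is immediate, because $\iota$ is dominated by the single seminorm $\|\cdot\|_{2,p_0,L}$.

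\emph{Step 3: uniqueness and density.} We need $C_c(E)$ to be dense in $H^{2,L}(E)$ for the Fréchet topology. Given $\xi\in H^{2,L}(E)$ and $n$, we pick $f_n\in C_c(E)$ with $\|\xi-f_n\|_{2,q,L}<1/n$ for all $q\le n$. By monotonicity it suffices to approximate in $\|\cdot\|_{2,n,L}$, so a single approximant works for each $n$. Thus $C_c(E)$ is dense in $H^{2,L}(E)$, and any continuous extension of the identity is unique. Since $\iota(C_c(E))=C_c(E)$ is dense in $C_r^*(E)$, the image of $\iota$ is dense.

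\emph{Main obstacle.} The difficulty is injectivity of $\iota$, which is needed to "regard $H^{2,L}(E)$ as a subspace", together with the identification inside the intersection. We would use the $j$-map $C_r^*(E)\to C_0(E)$, which is injective, contractive for $\|\cdot\|_\infty$, and the identity on $C_c(E)$. On $C_c(E)$ we have $j\circ\iota=\mathrm{id}$. Both $j\circ\iota$ and the canonical map $H^{2,p_0,L}(E)\to C_0(E)$ are $\|\cdot\|_\infty$-continuous, the first via $\|\cdot\|_\infty\le\|\cdot\|_r\le C\|\cdot\|_{2,p_0,L}$ and the second via $\|\cdot\|_\infty\le\|\cdot\|_{2,p_0,L}$. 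Hence the two maps agree, so $j(\iota(\xi))=\xi$ as sections, and injectivity of $\iota$ follows.
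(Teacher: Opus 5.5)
Your proposal is correct and follows the paper's route: use the Rapid Decay inequality to extend the identity to a bounded map on $H^{2,p_0,L}(E)$, then restrict it to $H^{2,L}(E)\subseteq H^{2,p_0,L}(E)$. Your additional points are left implicit in the paper's proof and are handled correctly in yours: monotonicity of the seminorms, Fr\'echet density of $C_c(E)$ in $H^{2,L}(E)$ (which is what actually gives uniqueness), and injectivity of $\iota$ via the injective map $j\colon C_r^*(E)\to C_0(E)$ (which justifies regarding $H^{2,L}(E)$ as a subspace).
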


\begin{proof}
By assumption there exist constants $C>0$ and $q> 0$ such that
\[
\|f\|_r\leq C\|f\|_{2,q,L}
\qquad\text{for all }f\in C_c(E).
\]
Thus the identity map on $C_c(E)$ is continuous from the normed space
$(C_c(E),\|\cdot\|_{2,q,L})$ into $C_r^*(E)$, and therefore extends uniquely to
a continuous map
\[
H^{2,q,L}(E)\to C_r^*(E).
\]
Restricting this map to $H^{2,L}(E)\subseteq H^{2,q,L}(E)$ gives the desired
continuous linear map
\[
\iota\colon H^{2,L}(E)\to C_r^*(E).
\]
Its image contains $C_c(E)$, hence is dense in $C_r^*(E)$.
\end{proof}

\begin{lemma}\label{lem:weighted-convolution-estimate}
Assume that $E$ has the Rapid Decay Property with respect to $L$, witnessed by
constants $C>0$ and $q> 0$.
Then for every $p> 0$ and every $f,g\in C_c(E)$ one has
\begin{align*}
\|f*g\|_{2,p,s,L}
&\leq C\,\|f\|_{2,p+q,L}\,\|g\|_{2,p,s,L},\\
\|f*g\|_{2,p,r,L}
&\leq C\,\|g\|_{2,p+q,L}\,\|f\|_{2,p,r,L}.
\end{align*}
In particular,
\[
\|f*g\|_{2,p,L}
\leq
C\max\big\{
\|f\|_{2,p+q,L}\|g\|_{2,p,L},
\|g\|_{2,p+q,L}\|f\|_{2,p,L}
\big\}.
\]
\end{lemma}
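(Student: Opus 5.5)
The plan is to reduce the weighted estimate to the unweighted Rapid Decay inequality applied to a \emph{weighted} copy of $f$. Write $w(\gamma):=1+L(\gamma)$. Subadditivity of $L$ gives, for $(\alpha,\beta)\in G^{(2)}$,
\[
w(\alpha\beta)\le 1+L(\alpha)+L(\beta)\le w(\alpha)\,w(\beta),
\qquad\text{hence}\qquad
w(\alpha\beta)^p\le w(\alpha)^p w(\beta)^p .
\]
So I would set $f_p:=f\,w^p$ and $g_p:=g\,w^p$; both lie in $C_c(E)$ since $L$ is continuous. Directly from the definitions,
\[
\|g_p\|_{2,s}=\|g\|_{2,p,s,L},
\qquad
\|f_p\|_{2,q,L}=\|f\|_{2,p+q,L}.
\]
The first inequality of the lemma should then follow from the chain
\[
\|f*g\|_{2,p,s,L}\;\lesssim\;\|\lambda(f_p)\|\,\|g_p\|_{2,s}\;\le\;\|f_p\|_r\,\|g\|_{2,p,s,L}\;\le\;C\,\|f_p\|_{2,q,L}\,\|g\|_{2,p,s,L},
\]
where the last step is the Rapid Decay hypothesis.

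For the first step, fix $x\in G^{(0)}$ and work in the Hilbert $E_x$-module $\ell^2(G_x,E)$, on which $C_r^*(E)$ acts by the regular representation $\lambda_x$. Here $\|\xi\|^2=\|\sum_{\gamma\in G_x}\xi(\gamma)^*\xi(\gamma)\|$, and $\lambda_x(h)\xi=h*\xi$ with $\|\lambda_x(h)\|\le\|h\|_r$. The restriction $g_p|_{G_x}$ is a vector of norm at most $\|g\|_{2,p,s,L}$, and $\lambda_x(f_p)g_p|_{G_x}=(f_p*g_p)|_{G_x}$. It would then remain to show that the weighted section $\big((f*g)w^p\big)|_{G_x}$ has norm at most $\|(f_p*g_p)|_{G_x}\|$, using
\[
\big((f*g)w^p\big)(\gamma)=\sum_{\alpha\beta=\gamma}f(\alpha)g(\beta)\,w(\alpha\beta)^p
\]
together with $w(\alpha\beta)^p=\theta(\alpha,\beta)\,w(\alpha)^pw(\beta)^p$ for some kernel $0\le\theta\le 1$.

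This comparison is the main obstacle. For scalar or line-bundle coefficients one simply takes absolute values and uses monotonicity of $\ell^2$-norms in the modulus, as in the Hou and Weygandt arguments. For general Fell bundles the terms $f(\alpha)g(\beta)$ are not positive, so pointwise domination by scalars is not automatic. My first attempt would be to pass to scalars via the Hilbert-module Cauchy--Schwarz inequality
\[
\Big\|\sum_i a_ib_i\Big\|\le\Big\|\sum_i a_ia_i^*\Big\|^{1/2}\Big\|\sum_i b_i^*b_i\Big\|^{1/2}
\]
and a duality pairing against unit vectors $\zeta\in\ell^2(G_x,E)$. This turns $\langle\zeta,(f*g)w^p\rangle$ into a double sum over $(\alpha,\beta)$ whose positive factors can be estimated with the factorization of $w^p$. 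If this does not close, I would fall back on the scalar majorant: for each $x$ and vector, replace $f,g$ by the scalar functions $\|f(\cdot)\|,\|g(\cdot)\|$ on the relevant fibres, and apply the Rapid Decay inequality to these via a Schur-multiplier bound for the kernel $\theta$. This fallback costs possibly a worse constant, but one independent of $f,g$.

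The second inequality follows from the first by taking adjoints. Since $(f*g)^*=g^**f^*$, $\|h\|_{2,p,r,L}=\|h^*\|_{2,p,s,L}$ and $\|h^*\|_{2,p+q,L}=\|h\|_{2,p+q,L}$, we get
\[
\|f*g\|_{2,p,r,L}=\|g^**f^*\|_{2,p,s,L}\le C\,\|g\|_{2,p+q,L}\,\|f\|_{2,p,r,L}.
\]
The ``in particular'' bound follows by combining the two inequalities, using $\|\cdot\|_{2,p,s,L},\|\cdot\|_{2,p,r,L}\le\|\cdot\|_{2,p,L}$ and taking the maximum.
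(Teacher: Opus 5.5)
\textbf{There is a genuine gap: the comparison step you leave open is the whole content of the lemma, and neither of your routes closes it.}

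Your framework matches the paper's: weighted copies $f_p=fw^p$, $g_p=gw^p$; the identities $\|f_p\|_{2,q,L}=\|f\|_{2,p+q,L}$ and $\|g_p\|_{2,s}=\|g\|_{2,p,s,L}$; the bound $\|\lambda_x(f_p)\|\le\|f_p\|_r$ on $\ell^2(G_x,E)$; RD applied to $f_p$; and adjoints for the range estimate. All of that is fine.

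The comparison step fails in the form you state it, even for the trivial line bundle. Take $\Gamma=\mathbb{Z}/2=\{e,s\}$ with $L(s)=1$, $f=\delta_e-2^{-p}\delta_s$ and $g=\delta_e+2^{-p}\delta_s$. Then $f_p*g_p=0$, but $(f*g)w^p=(1-4^{-p})\delta_e$: the reweighting by $\theta$ destroys the cancellations in $f_p*g_p$. What you actually need is that the operator $\xi\mapsto\sum_\beta\theta(\zeta\beta^{-1},\beta)f_p(\zeta\beta^{-1})\xi(\beta)$ on $\ell^2(G_x,E)$ is controlled by $\|\lambda_x(f_p)\|$, or by Sobolev norms of $f$. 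The pointwise bound $0\le\theta\le1$ does not give this once the entries are not positive scalars. In the example, $\theta=\begin{pmatrix}1&4^{-p}\\1&1\end{pmatrix}$, which is not a contractive Schur multiplier.

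Neither of your fallbacks supplies the missing bound.
\begin{itemize}
\item As sketched, the Cauchy--Schwarz/duality route never invokes RD. Applied termwise to the double sum, it yields Schur-test bounds of $\|\cdot\|_I$ type, which Sobolev norms do not control without polynomial growth.
\item The scalar-majorant route is the Hou/Weygandt argument. It needs the RD inequality for the scalar function $\gamma\mapsto\|f_p(\gamma)\|$ on $G$, which is not a hypothesis; RD of a Fell bundle need not pass to the base (cf.\ Remark~\ref{remark:RD-global-vs-partial}). Moreover, $\sup_x\bigl(\sum_{\gamma\in G_x}\|f(\gamma)\|^2w(\gamma)^{2q}\bigr)^{1/2}$ is not bounded by $\|f\|_{2,q,L}$, so you would end with the wrong right-hand side anyway.
\end{itemize}

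The paper's own proof has the same hole. From the pointwise bound $\|(f*g)(\zeta)\|w(\zeta)^p\le\sum_{\gamma\eta=\zeta}\|f_p(\gamma)\|\,\|g_p(\eta)\|$ it concludes ``hence $\|f*g\|_{2,p,s,L}\le\|f_p*g_p\|_{2,s}$''. That is exactly the inequality refuted above, and pointwise domination of norms does not control $\|\cdot\|_{2,s}$ for Fell bundles in any case. So your suspicion is well founded: the written argument is only valid in the scalar case, using $|f_p|,|g_p|$ and RD for $|f_p|$.

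One way to get a usable substitute is a Schur-multiplier argument on length shells.
\begin{itemize}
\item Write $f=\sum_m f_m$ with $f_m:=f\rho_m(L)$, where $(\rho_m)$ is a continuous partition of unity with $\supp\rho_m\subseteq[m-1,m+1]$.
\item Wherever $f_m(\zeta\beta^{-1})\neq0$ one has $|L(\zeta)-L(\beta)|\le m+1$. So for integer $p$,
\[
w(\zeta)^p=\sum_{k=0}^p\binom{p}{k}\,w(\beta)^{p-k}\bigl(L(\zeta)-L(\beta)\bigr)^k .
\]
\item Let $D_u$ denote diagonal multiplication by $u$, and take $\psi(u)=u^k\chi(u/(m+1))$ with $\chi\in C_c^\infty(\bR)$, $\chi\equiv1$ on $[-1,1]$, and $\psi(u)=\int\hat\psi(t)e^{itu}\,dt$. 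The operator with entries $(L(\zeta)-L(\beta))^kf_m(\zeta\beta^{-1})$ equals $\int\hat\psi(t)\,D_{e^{itL}}\lambda_x(f_m)D_{e^{-itL}}\,dt$.
\item Its norm is therefore at most $\|\hat\psi\|_1\|f_m\|_r=O((m+1)^k)\|f_m\|_r$.
\item Apply RD to $f_m$, use $\|f_m\|_{2,q,L}\le m^{-(N-q)}\|f\|_{2,N,L}$ for $m\ge1$, and sum over $m$. This gives
\[
\|f*g\|_{2,p,s,L}\le C_p\|f\|_{2,p+q+2,L}\|g\|_{2,p,s,L}.
\]
\end{itemize}
This is weaker than the stated constant and index, but it is enough for Theorem~\ref{thm:schwartz-subalgebra}.
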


\begin{proof}
Fix $p\geq 0$, and write for $\gamma\in G$,
\[
f_p(\gamma):=f(\gamma)(1+L(\gamma))^p,
\qquad
g_p(\gamma):=g(\gamma)(1+L(\gamma))^p.
\]
Since
\[
1+L(\gamma\eta)\leq (1+L(\gamma))(1+L(\eta))\quad\mbox{for all }\gamma,\eta\in G,
\]
we obtain the pointwise estimate
\[
\|(f*g)(\zeta)\|(1+L(\zeta))^p
\leq
\sum_{\gamma\eta=\zeta}\|f_p(\gamma)\|\,\|g_p(\eta)\|.
\]
Hence
\[
\|f*g\|_{2,p,s,L}\leq \|f_p*g_p\|_{2,s}.
\]
Now left convolution by $f_p$ on the source-side Hilbert module is bounded by
$\|f_p\|_r$, so
\[
\|f_p*g_p\|_{2,s}\leq \|f_p\|_r\,\|g_p\|_{2,s}.
\]
Using Rapid Decay for $f_p$, we get
\[
\|f_p\|_r\leq C\|f_p\|_{2,q,L}=C\|f\|_{2,p+q,L},
\]
and therefore
\[
\|f*g\|_{2,p,s,L}\leq C\,\|f\|_{2,p+q,L}\,\|g\|_{2,p,s,L}.
\]

The range estimate follows by applying the source estimate to
\[
(f*g)^*=g^**f^*,
\]
namely
\begin{align*}
\|f*g\|_{2,p,r,L}
&=\|(f*g)^*\|_{2,p,s,L}\\
&=\|g^**f^*\|_{2,p,s,L}\\
&\leq C\,\|g^*\|_{2,p+q,L}\,\|f^*\|_{2,p,s,L}\\
&=C\,\|g\|_{2,p+q,L}\,\|f\|_{2,p,r,L}.
\end{align*}
\end{proof}

\begin{theorem}\label{thm:schwartz-subalgebra}
Assume that $E$ has the Rapid Decay Property with respect to $L$.
Then $H^{2,L}(E)$ is a dense involutive Fr\'echet subalgebra of $C_r^*(E)$.
\end{theorem}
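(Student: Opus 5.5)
The plan is to treat $H^{2,L}(E)$ as the projective limit of the Banach completions $H^{2,p,L}(E)$, extend the algebraic operations from $C_c(E)$ by continuity using Lemma~\ref{lem:weighted-convolution-estimate}, and then use Proposition~\ref{prop:RD-gives-continuous-inclusion} to place everything inside $C_r^*(E)$.

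First, the Fréchet structure. The seminorms $\|\cdot\|_{2,p,L}$ increase with $p$, because $(1+L)^{2p}\le(1+L)^{2p'}$ for $p\le p'$, and since $1\le(1+L)^{2p}$ they all dominate $\|\cdot\|_{II}\ge\|\cdot\|_\infty$. Hence all the completions $H^{2,p,L}(E)$ embed compatibly into $H^{2,0,L}(E)$. Each $H^{2,0,L}(E)$-element can be realized as a section in $C_0(G,E)$ via the bound $\|\cdot\|_\infty\le\|\cdot\|_{II}$, because uniform limits of sections in $C_c(E)$ lie in $C_0(G,E)$. So the intersection defining $H^{2,L}(E)$ is meaningful as a space of sections. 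It is complete for the countable family of seminorms: a sequence that is Cauchy in every $\|\cdot\|_{2,p,L}$ converges in each $H^{2,p,L}(E)$, and the limits coincide as sections in $C_0(G,E)$. We also have to check that the topology is Hausdorff, since $\|\cdot\|_{2,0,L}$ is a norm on sections.

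Second, the operations. The involution preserves every $\|\cdot\|_{2,p,L}$, since $\|f\|_{2,p,L}=\|f^*\|_{2,p,L}$. So it extends isometrically to each $H^{2,p,L}(E)$ and hence to $H^{2,L}(E)$.

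For multiplication, Lemma~\ref{lem:weighted-convolution-estimate} gives
\[
\|f*g\|_{2,p,L}\le C\,\|f\|_{2,p+q,L}\|g\|_{2,p+q,L}.
\]
Thus convolution is jointly continuous from $H^{2,L}\times H^{2,L}$ to $H^{2,p,L}$ for every $p$, and it extends to a continuous bilinear product on $H^{2,L}(E)$. This requires $C_c(E)$ to be dense in $H^{2,L}(E)$ for the Fréchet topology, which I will take as part of the definition and verify as follows. An element $f$ of the intersection is, for each $p$, a $\|\cdot\|_{2,p,L}$-limit of some $f_n\in C_c(E)$. A diagonal choice then works because the seminorms are increasing: approximate $f$ in $\|\cdot\|_{2,n,L}$ to within $1/n$. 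Associativity and the $*$-anti-multiplicativity pass to the limit from $C_c(E)$.

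Third, the embedding. By Proposition~\ref{prop:RD-gives-continuous-inclusion} the map $\iota$ is continuous. It is multiplicative and $*$-preserving on the dense subspace $C_c(E)$, hence on all of $H^{2,L}(E)$ by continuity of both products. Its image is dense since it contains $C_c(E)$.

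The main obstacle is the injectivity of $\iota$, which is needed to call $H^{2,L}(E)$ a subalgebra. I would argue via the canonical contractive map $j\colon C_r^*(E)\to C_0(G,E)$, $a\mapsto$ (its section), which satisfies $\|j(a)\|_\infty\le\|a\|_r$. The maps $j\circ\iota$ and the section-realization described in the first step agree on $C_c(E)$. Both are continuous into $C_0(G,E)$ with the sup norm, because $\|\cdot\|_\infty\le\|\cdot\|_{2,0,L}$. Hence they agree everywhere, so $\iota(f)=0$ forces the section $f$ to vanish, and therefore $f=0$. With injectivity in hand, $H^{2,L}(E)$ is identified with a dense involutive Fréchet subalgebra of $C_r^*(E)$.
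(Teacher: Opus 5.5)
Your proposal is correct and follows essentially the same route as the paper. The involution is isometric for each $\|\cdot\|_{2,p,L}$, the product is extended from $C_c(E)$ by the estimate of Lemma~\ref{lem:weighted-convolution-estimate}, and density and compatibility with the product of $C_r^*(E)$ come from the continuous map $\iota$ of Proposition~\ref{prop:RD-gives-continuous-inclusion}. Your extra checks supply details that the paper's proof leaves implicit: density of $C_c(E)$ in $H^{2,L}(E)$ by the diagonal argument, and injectivity of $\iota$ by realizing elements as sections in $C_0(G,E)$ and using the sup-norm bound.
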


\begin{proof}
Density follows from Proposition~\ref{prop:RD-gives-continuous-inclusion},
since $C_c(E)$ is dense in both $H^{2,L}(E)$ and $C_r^*(E)$.

The involution is continuous because
\[
\|f^*\|_{2,p,L}=\|f\|_{2,p,L}
\qquad\text{for all }p\geq 0.
\]

Let $f,g\in H^{2,L}(E)$, and choose sequences $(f_n)$ and $(g_n)$ in $C_c(E)$
such that
\[
f_n\to f,\qquad g_n\to g
\]
in the Fr\'echet topology of $H^{2,L}(E)$.
Fix $p\geq 0$.
By Lemma~\ref{lem:weighted-convolution-estimate},
\[
\|f_n*g_n-f_m*g_m\|_{2,p,L}
\]
is bounded by a linear combination of
\[
\|f_n-f_m\|_{2,p+q,L}\|g_n\|_{2,p,L},
\qquad
\|g_n-g_m\|_{2,p+q,L}\|f_m\|_{2,p,L}.
\]
Since $(f_n)$ and $(g_n)$ are Cauchy in every seminorm $\|\cdot\|_{2,r,L}$,
it follows that $(f_n*g_n)$ is Cauchy in each seminorm $\|\cdot\|_{2,p,L}$.
Hence it converges in $H^{2,L}(E)$ to some element, denoted by $f*g$.

Because the inclusion
\[
H^{2,L}(E)\hookrightarrow C_r^*(E)
\]
is continuous, this limit agrees with the convolution product in $C_r^*(E)$.
Thus $H^{2,L}(E)$ is closed under convolution.
\end{proof}

\begin{remark}
In the sequel, the main role of $H^{2,L}(E)$ is as a regularizing subalgebra:
under suitable multiplier assumptions, one shows that certain elements of
$C_r^*(E)$ belong to $H^{2,L}(E)$, and hence can be approximated locally by
compactly supported sections.
\end{remark}


Here is a general permanence property of the Rapid Decay Property:

\begin{proposition}\label{prop:RD-open-subgroupoid}
Let $G$ be an \'etale groupoid equipped with a length function $L$, and let
$E\to G$ be a Fell bundle.
Suppose that $E$ has the Rapid Decay Property with respect to $L$.

Let $H\subseteq G$ be an open subgroupoid and denote by
$E|_H \to H$ the restricted Fell bundle.
Then $E|_H$ also has the Rapid Decay Property with respect to the restricted
length function $L|_H$.
\end{proposition}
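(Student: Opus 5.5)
The plan is to compare, for $f\in C_c(E|_H)$, the two sides of the Rapid Decay inequality with the corresponding quantities for its extension by zero $\tilde f$ to $G$. Since $H$ is open, extension by zero gives an injective $*$-homomorphism $C_c(E|_H)\to C_c(E)$, $f\mapsto\tilde f$. Because $H$ is a subgroupoid, products of elements of $H$ stay in $H$, and for $\zeta\in H$ every factorisation $\zeta=\gamma\eta$ with $\gamma,\eta\in H$ is also a factorisation in $G$. Conversely, a factorisation in $G$ with a factor outside $H$ contributes zero for $\tilde f*\tilde g$. Hence $\widetilde{f*g}=\tilde f*\tilde g$, and compatibility with involution is clear.

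The Sobolev side is immediate. For $x\in H^{(0)}\subseteq G^{(0)}$ the fibre $H_x$ is contained in $G_x$, and $\tilde f$ vanishes on $G_x\setminus H_x$. So the sums defining $\|\tilde f\|_{2,p,s,L}$ and $\|f\|_{2,p,s,L|_H}$ coincide at such $x$. For $x\in G^{(0)}\setminus H^{(0)}$ we have $G_x\cap H=\emptyset$, since $s(H)\subseteq H^{(0)}$, so the sum is $0$. Arguing the same way on the range side gives
\[
\|\tilde f\|_{2,p,L}=\|f\|_{2,p,L|_H}.
\]

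The main step is the reduced norm inequality $\|f\|_{r,E|_H}\le\|\tilde f\|_{r,E}$; I expect this to be the only real obstacle. I will compute the reduced norm through the regular representations on the fibre Hilbert modules. For $x\in H^{(0)}$, the module $\ell^2(E|_{H_x})$ over $E_x$ sits as a complemented submodule of $\ell^2(E|_{G_x})$, namely the sections supported on $H_x$. Left convolution by $\tilde f$ maps it into itself, because $\gamma\in H$ and $\eta\in H_x$ give $\gamma\eta\in H_x$. On this submodule, left convolution by $\tilde f$ restricts exactly to the regular representation $\lambda_x^H(f)$. The norm of an adjointable operator's restriction to an invariant complemented submodule is at most the norm of the operator, so $\|\lambda^H_x(f)\|\le\|\lambda^G_x(\tilde f)\|\le\|\tilde f\|_r$. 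Taking the supremum over $x\in H^{(0)}$ gives the claim. This requires the reduced norm to be realised as the supremum over units of these fibrewise regular representations, possibly after inducing through $E_x$-representations. If the paper's convention is instead the $C_0(G^{(0)})$-module $L^2(E)$, the same argument applies to the closed submodule of sections supported in $H$ (which is well defined since $H$ is open) after restricting coefficients to $C_0(H^{(0)})$. One could alternatively invoke a conditional-expectation argument, namely that $C_r^*(E|_H)\to C_r^*(E)$ is isometric for open subgroupoids, but the direct compression estimate suffices and only needs the inequality.

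Combining the steps with the Rapid Decay Property of $E$ gives, for all $f\in C_c(E|_H)$,
\[
\|f\|_{r}\le\|\tilde f\|_r\le C\|\tilde f\|_{2,p,L}=C\|f\|_{2,p,L|_H},
\]
with the same constants $C$ and $p$. Finally, $L|_H$ is clearly a length function on $H$, since symmetry, vanishing on units and subadditivity are all inherited from $L$.
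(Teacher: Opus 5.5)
Your proposal is correct and follows the same route as the paper. Both arguments extend by zero, bound $\|f\|_{r,H}$ by $\|\tilde f\|_{r,G}$ by restricting the fibrewise regular representations to the part of the fibre modules supported on $H$, and then compare the Sobolev norms. Your observation that the Sobolev norms are actually equal, $\|\tilde f\|_{2,p,L}=\|f\|_{2,p,L|_H}$, is exactly what the final chain $\|f\|_{r,H}\le C\|\tilde f\|_{2,p,L}\le C\|f\|_{2,p,L|_H}$ requires. The paper only records $\|f\|_{2,p,L|_H}\le\|f\|_{2,p,L}$, which is the opposite direction to the one its last step uses.
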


\begin{proof}
Let $f\in C_c(E|_H)$.
Viewing $f$ as a compactly supported section of $E$ by extending it by $0$
outside $H$, we obtain an element of $C_c(E)$.

Since $H$ is a subgroupoid, the convolution and involution computed in
$C_c(E|_H)$ coincide with those computed in $C_c(E)$.
Moreover, the reduced regular representation of $E|_H$ at a unit
$x\in H^{(0)}$ is naturally identified with the restriction of the reduced
regular representation of $E$ at $x$.
Hence
\[
\|f\|_{r,H} \le \|f\|_{r,G}.
\]

For the Sobolev-type norms we have
\[
\|f\|_{2,p,s,L|_H}
=
\sup_{x\in H^{(0)}}
\Big(\sum_{\gamma\in H_x}
\|f(\gamma)\|^2(1+L(\gamma))^{2p}\Big)^{1/2}
\le
\|f\|_{2,p,s,L},
\]
since $H_x\subseteq G_x$.
Similarly,
\[
\|f\|_{2,p,r,L|_H} \le \|f\|_{2,p,r,L}.
\]
Therefore
\[
\|f\|_{2,p,L|_H}\le \|f\|_{2,p,L}.
\]

Since $E$ has Rapid Decay with respect to $L$, there exist constants
$C>0$ and $p>0$ such that
\[
\|f\|_{r,G}\le C\|f\|_{2,p,L}.
\]
Combining the above inequalities yields
\[
\|f\|_{r,H}
\le
\|f\|_{r,G}
\le
C\|f\|_{2,p,L}
\le
C\|f\|_{2,p,L|_H}.
\]
Hence $E|_H$ has the Rapid Decay Property with respect to $L|_H$.
\end{proof}

\begin{remark}
    It would also be natural to study stability properties of the Rapid Decay
Property under constructions such as products of groupoids and exterior
tensor products of Fell bundles. For instance, the Rapid Decay Property
passes to products $G\times H$ whenever $G$ has RD and $H$ is compact,
see~\cite[Proposition~4.2]{Weygandt2024}.
However, extending this type of stability result to Fell bundles appears
to require a careful treatment of exterior tensor products of Fell bundles
over groupoids, which we leave for future work.
\end{remark}

\section{Fell Bundles over Groupoids with polynomial growth}\label{sec:Fell-bundles-poly-growth}

In this section, we show that the Rapid Decay Property is satisfied by a very broad class of Fell bundles, namely those whose underlying groupoid exhibits polynomial growth. This covers, for instance, all Fell bundles over finitely generated abelian groups (such as $\mathbb{Z}^n$) and transformation groupoids arising from partial actions of polynomial growth groups.

Let us first recall the definition of polynomial growth for \'etale groupoids, which was investigated by Austad, Ortega, and Palmstr\o m in \cite[Definition~3.11]{AustadOrtegaPalmstrom2025}.

\begin{definition}[{\cite[Definition~3.11]{AustadOrtegaPalmstrom2025}}]
    Let $G$ be an \'etale groupoid equipped with a length function $L$. We say that $G$ has \emph{polynomial growth} with respect to $L$ if there exist constants $c > 0$ and $d > 0$ such that for all $x \in G^{(0)}$ and all $n \geq 0$,
    \begin{align*}
        |B_{G_x}(n)| \leq c(1+n)^d \,,
    \end{align*}
    where $B_{G_x}(n) = \{g \in G_x : L(g) \leq n\}$ is the closed ball of radius $n$ in the fiber $G_x$.
\end{definition}

\begin{example}[Partial transformation groupoids]
    Let $\Gamma$ be a discrete group with polynomial growth with respect to a length function $\ell$. This means there exist constants $c > 0$ and $d \geq 1$ such that $|B_\Gamma(n)| \leq c(1+n)^d$ for all $n \geq 0$, where $B_\Gamma(n) = \{ \gamma \in \Gamma : \ell(\gamma) \leq n \}$.
    
    Consider a partial action $\theta$ of $\Gamma$ on a locally compact Hausdorff space $X$. The associated partial transformation groupoid $G = \Gamma \ltimes_\theta X$ is an \'etale groupoid with unit space $G^{(0)} \cong X$. We can define a natural length function $L$ on $G$ by setting $L(\gamma, x) = \ell(\gamma)$ for every $(\gamma, x) \in G$. 
    
    For any $x \in X$, the source fiber is $G_x = \{ (\gamma, x) : x \in U_{\gamma^{-1}} \}$. Since the map $(\gamma, x) \mapsto \gamma$ is an injection from $G_x$ into $\Gamma$, we have
    \begin{align*}
        |B_{G_x}(n)| &= |\{ (\gamma, x) \in G_x : L(\gamma, x) \leq n \}| \\
        &\leq |\{ \gamma \in \Gamma : \ell(\gamma) \leq n \}| = |B_\Gamma(n)| \leq c(1+n)^d
    \end{align*}
    for all $x \in X$ and $n \geq 0$. Thus, $G$ has polynomial growth with respect to $L$. 
    
    In particular, by Theorem \ref{teo:pol-growth}, it will follow that any Fell bundle over a partial transformation groupoid $G = \Gamma \ltimes_\theta X$ has the RDP whenever $\Gamma$ is a group with polynomial growth (e.g., if $\Gamma$ is a finitely generated abelian or nilpotent group).
\end{example}

\begin{example}[AF groupoids and adic dynamics]
    The case of $\Gamma = \mathbb{Z}$ in the previous example is already rich enough to encompass the entire class of AF (Approximately Finite) groupoids. As established by the foundational works of Renault \cite{Renault1980}, Herman, Putnam, Skau \cite{Herman1992}, and Exel \cite{Exel1993}, any AF groupoid $G$---which is classically modeled as the tail equivalence relation on the path space $X$ of a Bratteli diagram---can be realized as a partial transformation groupoid:
    $$G \cong \mathbb{Z} \ltimes_\theta X,$$
    where $\theta$ is the adic (or Vershik) transformation induced by an ordering on the diagram. 
    
    This structural realization equips $G$ with a canonical ``adic'' length function inherited from $\mathbb{Z}$, given by $L(n, x) = |n|$. Under this specific length function, the groupoid inherits the linear growth of $\mathbb{Z}$. Thus, while a naive combinatorial length function on the Bratteli diagram might exhibit exponential growth, the dynamical realization guarantees the existence of a natural length function of polynomial growth (degree 1). It then follows from Theorem~\ref{teo:pol-growth} below that any Fell bundle over an AF groupoid satisfies the RDP with respect to this adic length function.
\end{example}

This next theorem provides a large class of examples of Fell bundles which admit rapid decay. The proof is an adaptation of the arguments found in \cite[Proposition~3.14]{AustadOrtegaPalmstrom2025} to the context of Fell bundles.

\begin{theorem}\label{teo:pol-growth}
    Let $G$ be an \'etale groupoid with polynomial growth with respect to a length function $L$. Then any Fell bundle $E$ over $G$ has RDP with respect to $L$.
\end{theorem}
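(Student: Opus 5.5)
The plan is to bound the reduced norm by the $I$-norm from \eqref{eq:norm-inequalities} after a Cauchy--Schwarz step that exploits polynomial growth. Since $\|f\|_r\le\|f\|_I$, it suffices to show
\[
\|f\|_I\le C\|f\|_{2,p,L}
\]
for some $p$, and by symmetry ($\|f\|_{2,p,r,L}=\|f^*\|_{2,p,s,L}$, with $\|f^*(\gamma)\|=\|f(\gamma^{-1})\|$ and $L$ symmetric) it suffices to handle the source-side sum
\[
\sum_{\gamma\in G_x}\|f(\gamma)\|.
\]
The range-side sum $\sum_{\gamma\in G^x}\|f(\gamma)\|$ equals the source sum for $f^*$, since inversion maps $G^x$ bijectively onto $G_x$.

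Fix $x\in G^{(0)}$ and write
\[
\|f(\gamma)\|=\|f(\gamma)\|(1+L(\gamma))^{p}\cdot(1+L(\gamma))^{-p}.
\]
Cauchy--Schwarz gives
\[
\sum_{\gamma\in G_x}\|f(\gamma)\|\le\Big(\sum_{\gamma\in G_x}\|f(\gamma)\|^2(1+L(\gamma))^{2p}\Big)^{1/2}\Big(\sum_{\gamma\in G_x}(1+L(\gamma))^{-2p}\Big)^{1/2}.
\]

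Two points then need checking. First, the scalar weighted sum must be dominated by the Fell-bundle Sobolev norm. Here $\|f(\gamma)\|^2=\|f(\gamma)^*f(\gamma)\|$. Each $f(\gamma)^*f(\gamma)$ is positive in the $C^*$-algebra $E_x$, and a sum of positive elements has norm at least the norm of each term. However, the norm of a sum of positive elements is generally \emph{not} at least the sum of their norms. Using only $\|\sum a_\gamma\|\ge\max\|a_\gamma\|$ loses too much, so this is the main obstacle. I would circumvent it by never passing to scalars on the $f$ side and working with Hilbert-module Cauchy--Schwarz instead. For vectors $a_\gamma\in E_\gamma$ and scalars $c_\gamma$, the inequality
\[
\Big\|\sum_\gamma c_\gamma a_\gamma\Big\|
\]
with $a_\gamma\in E_\gamma$ does not make sense, since the $a_\gamma$ lie in different fibers. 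So the better route is to bound $\|f\|_r$ directly, via the regular representation on $\ell^2(G_x,E)$, by a Schur-type/Cauchy--Schwarz argument in Hilbert modules:
\[
\Big\|\sum_i a_i^* b_i\Big\|\le\Big\|\sum_i a_i^*a_i\Big\|^{1/2}\Big\|\sum_i b_i^*b_i\Big\|^{1/2}.
\]

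Concretely, for $\xi\in\ell^2(G_y,E)$, I write
\[
(f*\xi)(\zeta)=\sum_{\gamma\eta=\zeta}f(\gamma)\xi(\eta).
\]
Then I decompose $f=\sum_{n\ge0}f_n$ with $f_n$ the restriction to the annulus $n\le L<n+1$; the truncation can be done via continuous cutoffs, or the estimate can simply be run at the level of operators on $\ell^2$. For each annulus, I bound $\|f_n\|_r$ using the growth bound: each fiber meets the annulus in at most $c(n+2)^d$ points. A Hilbert-module Cauchy--Schwarz over the at most $c(n+2)^d$ terms gives
\[
\|f_n\|_r\le\big(c(n+2)^d\big)^{1/2}\|f_n\|_{II}.
\]
This is the Fell-bundle analogue of the estimate in \cite[Proposition~3.14]{AustadOrtegaPalmstrom2025}, and I expect the careful operator-valued verification of this bound to be the technical heart of the proof.

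Finally, I sum over the annuli:
\[
\|f\|_r\le\sum_n\|f_n\|_r\le\sum_n c^{1/2}(n+2)^{d/2}(1+n)^{-p}\|f\|_{2,p,L},
\]
using $\|f_n\|_{II}\le(1+n)^{-p}\|f\|_{2,p,L}$. This last bound follows because multiplying the positive sums by $(1+L)^{2p}\ge(1+n)^{2p}$ on the annulus only increases them in the order on $E_x$. Choosing an integer $p>d/2+1$ makes the series converge, which yields the Rapid Decay Property with $C=c^{1/2}\sum_n(n+2)^{d/2}(1+n)^{-p}$.
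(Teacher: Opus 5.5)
Your final argument is correct, but it follows a different route from the paper's. You were right to drop the scalar $I$-norm route, since $\sum_\gamma\|f(\gamma)\|^2$ is not controlled by $\|\sum_\gamma f(\gamma)^*f(\gamma)\|$.

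The paper never decomposes $f$. It imports the operator-valued Schur bound
\[
\|f\|_r^2\le\sup_{x}\Big\|\sum_{g\in G_x}(f(g)^*f(g))^{1/2}\Big\|\cdot\sup_{x}\Big\|\sum_{g\in G^x}(f(g)f(g)^*)^{1/2}\Big\|
\]
from \cite[Remark~5.5]{BHM:universal-propII}. In effect it keeps the ``$I$-norm'' sum inside $E_x$, which is exactly the fix for the obstacle you identified. It then applies in $E_x$ the operator Cauchy--Schwarz inequality $\sum_g \lambda_g a_g\le(\sum_g\lambda_g^2)^{1/2}(\sum_g a_g^2)^{1/2}$ with $\lambda_g=(1+L(g))^{-k}$. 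Polynomial growth therefore enters only once, through $\sup_x\sum_{g\in G_x}(1+L(g))^{-2k}<\infty$.

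You instead split $f$ into annular pieces and bound each piece crudely. The step you defer as the ``technical heart'' is elementary. Write $|a|^2:=a^*a$, and take $\xi\in\ell^2(G_y,E)$ and $\zeta\in G_y$. The nonzero terms of $(f_n*\xi)(\zeta)$ have $\gamma\in G^{r(\zeta)}$ in the $n$-th annulus, so by symmetry of $L$ there are at most $N_n\le c(n+2)^d$ of them, all in $E_\zeta$. Apply $\bigl|\sum_{i=1}^N a_i\bigr|^2\le N\sum_i|a_i|^2$ pointwise and reindex by $\eta=\gamma^{-1}\zeta$ to get
\[
\langle f_n*\xi,f_n*\xi\rangle\le N_n\sum_{\eta\in G_y}\xi(\eta)^*\Big(\sum_{\gamma\in G_{r(\eta)}}f_n(\gamma)^*f_n(\gamma)\Big)\xi(\eta)\le N_n\|f_n\|_{2,s}^2\langle\xi,\xi\rangle .
\]
This works for the discontinuous truncations $f_n$ directly on the regular representations. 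Only finitely many $n$ occur, because $L$ is bounded on $\supp(f)$.

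Your route is self-contained, since it needs no Schur-type inequality. It even shows that the source-side Sobolev norm alone dominates $\|f\|_r$. The paper's route is shorter given the citation and treats the whole fiber in a single Cauchy--Schwarz step, with no annulus summation.
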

\begin{proof}
    Since $G$ has polynomial growth, there exist a constant $c > 0$ and a positive integer $t \geq 1$ such that
    \begin{align*}
        \sup_{x \in G^{(0)}}|B_{G_x}(m)| \leq c (1+m)^t 
    \end{align*}
    for all $m \geq 0$. Fix $x \in G^{(0)}$ and let $k=2+t$. We estimate the sum:
    \begin{align*}
        \sum_{g \in G_x}(1+L(g))^{-2k} &= \sum_{n=0}^{\infty} \sum_{\substack{g \in G_x \\ n \leq L(g) < n+1}}(1+L(g))^{-2k}\\
        & \leq \sum_{n=0}^{\infty}|B_{G_x}(n+1)|(1+n)^{-2k}\\
        & \leq  c \sum_{n=0}^{\infty} (2+n)^t (1+n)^{-2k}\\
        & \leq 2^t c \sum_{n=0}^{\infty} (1+n)^t (1+n)^{-2(2+t)}\\
        & = 2^t c \sum_{n=0}^{\infty} (1+n)^{-(t+4)} \leq 2^t c \sum_{n=0}^{\infty} (1+n)^{-4} =: c_1 \,.
    \end{align*}
    Now we use the following inequality, which is a particular case of \cite[Remark~5.5]{BHM:universal-propII},
    \begin{align*}
        \|f\|_r^2 \leq \sup_{x \in G^{(0)}}\left\|\sum_{g \in G_x} (f(g)^{\ast}f(g))^{1/2}\right\| \cdot \sup_{x \in G^{(0)}}\left\|\sum_{g \in G^x} (f(g)f(g)^{\ast})^{1/2}\right\|,
    \end{align*}
    for all $f \in C_c(E)$.
    Recall that in any $C^*$-algebra $A$, for any finite sequence of positive elements $a_g \in A^+$ and scalars $\lambda_g \geq 0$, the generalized Cauchy-Schwarz inequality holds:
    \begin{align}\label{impineq}
        \sum_g \lambda_g a_g \leq \left(\sum_g \lambda_g^2\right)^{\frac{1}{2}} \left(\sum_g a_g^2\right)^{\frac{1}{2}}\,.
    \end{align}
    Applying \eqref{impineq} inside the $C^*$-algebra $E_x$ with $a_g = (f(g)^{\ast}f(g))^{1/2}(1+L(g))^k$ and $\lambda_g = (1+L(g))^{-k}$, we obtain:
    \begin{align*}
        \sum_{g \in G_x} (f(g)^{\ast}f(g))^{1/2} &=\sum_{g \in G_x} (f(g)^{\ast}f(g))^{1/2} (1+L(g))^k (1+L(g))^{-k}\\
        & \leq \left(\sum_{g \in G_x} (1+L(g))^{-2k}\right)^{\frac{1}{2}} \left(\sum_{g \in G_x} f(g)^{\ast} f(g) (1+L(g))^{2k} \right)^{\frac{1}{2}}\\
        & \leq \sqrt{c_1} \left(\sum_{g \in G_x} f(g)^{\ast} f(g) (1+L(g))^{2k} \right)^{\frac{1}{2}}\,.
    \end{align*}
    Taking the norm and the supremum over $x \in G^{(0)}$, we get
    \begin{multline*}
        \sup_{x \in G^{(0)}} \left\|\sum_{g \in G_x} (f(g)^{\ast}f(g))^{1/2} \right\| \leq \\
         \sqrt{c_1} \sup_{x \in G^{(0)}} \left\|\left(\sum_{g \in G_x} f(g)^{\ast} f(g) (1+L(g))^{2k} \right)^{\frac{1}{2}}\right\| \leq \sqrt{c_1} \|f\|_{2,k}\,.
    \end{multline*}
    By a completely analogous argument (using the inversion map on $G$ to sum over $G^x$), we also get:
    \begin{multline*}
        \sup_{x \in G^{(0)}} \left\|\sum_{g \in G^x} (f(g)f(g)^{\ast})^{1/2} \right\| \leq \\
        \sqrt{c_1} \sup_{x \in G^{(0)}} \left\|\left(\sum_{g \in G^x} f(g)f(g)^{\ast} (1+L(g))^{2k} \right)^{\frac{1}{2}}\right\| \leq \sqrt{c_1} \|f\|_{2,k}\,.
     \end{multline*}
    Hence 
    \begin{align*}
        \|f\|_r^2 \leq c_1 \|f\|_{2,k}^2\,,
    \end{align*}
    which implies $\|f\|_r \leq \sqrt{c_1} \|f\|_{2,k}$. Thus the result follows.
\end{proof}

As an immediate consequence of the previous theorem, we recover the fact that any Fell bundle over a compact groupoid has the Rapid Decay Property.

\begin{corollary}
Let $G$ be a compact Hausdorff \'etale groupoid, and let $L\colon G\to [0,\infty)$ be a continuous length function. Then any Fell bundle $E \to G$ has the Rapid Decay Property with respect to $L$.
\end{corollary}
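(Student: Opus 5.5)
The plan is to reduce the statement to Theorem~\ref{teo:pol-growth} by checking that a compact étale groupoid has polynomial growth, in fact bounded growth, with respect to any continuous length function $L$. Once that is established, the corollary follows at once.

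\textbf{Uniformly bounded fibres.} Since $G$ is étale, each fibre $G_x=s^{-1}(x)$ is discrete, being a fibre of a local homeomorphism. Each $G_x$ is also closed in the compact Hausdorff space $G$, hence compact. A compact discrete set is finite, so every $G_x$ is finite.

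For the uniform bound I would cover $G$ by open bisections. Compactness gives finitely many bisections $U_1,\dots,U_N$ with $G=\bigcup_{i=1}^N U_i$. Each $U_i$ meets a given fibre $G_x$ in at most one point, because $s|_{U_i}$ is injective. Hence
\[
|G_x|\le N \qquad\text{for all } x\in G^{(0)}.
\]

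\textbf{Polynomial growth and conclusion.} From the previous step,
\[
|B_{G_x}(n)|\le |G_x|\le N\le N(1+n)^d
\]
for every $d>0$ and every $n\geq 0$. So $G$ has polynomial growth with respect to $L$ in the sense of \cite[Definition~3.11]{AustadOrtegaPalmstrom2025}. Theorem~\ref{teo:pol-growth} then gives the Rapid Decay Property for any Fell bundle $E\to G$.

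The only step needing care is the uniform bound on $|G_x|$, since finiteness of each individual fibre is not enough on its own. The finite cover by bisections handles this cleanly. Continuity and boundedness of $L$ play no role beyond making $L$ a length function.
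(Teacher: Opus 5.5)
Your proof is correct and follows the paper's argument: a finite cover of $G$ by open bisections gives a uniform bound $|G_x|\le N$, hence polynomial growth, and Theorem~\ref{teo:pol-growth} finishes the job. Your bound $N\le N(1+n)^d$ for every $d>0$ is slightly more careful than the paper's phrase ``polynomial growth of degree $0$'', since the cited definition requires $d>0$.
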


\begin{proof}
Since $G$ is compact and \'etale, there exists a finite family of open bisections covering $G$. Hence, there is a uniform bound $n \in \mathbb{N}$ such that $|G_x|\le n$ for all $x\in G^{(0)}$. This implies that $G$ has polynomial growth of degree $0$ with respect to any length function. The result then follows directly from Theorem~\ref{teo:pol-growth}.
\end{proof}

\begin{example}[Deaconu-Renault groupoids and exponential growth]
    It is important to emphasize that polynomial growth is not a universal feature of dynamical groupoids. Consider the Deaconu-Renault groupoid $G$ associated with a local homeomorphism $T \colon X \to X$ on a locally compact Hausdorff space $X$, defined by
    \[
    G = \{ (x, m-n, y) \in X \times \mathbb{Z} \times X : T^m(x) = T^n(y), \, m,n \in \mathbb{N}_0 \},
    \]
    where $s(x, k, y) = y$ and $r(x, k, y) = x$. We equip $G$ with the natural length function $L(x, k, y) = \min \{ m+n : k = m-n, \, T^m(x) = T^n(y) \}$. 
    
    To analyze the growth of the fibers, suppose that $T$ is a $d$-to-1 covering map with $d \geq 2$. This naturally occurs, for instance, when $T(z)=z^d$ on the circle $S^1$, or when $T$ is the shift map on the totally disconnected full shift space $X = \{1, \dots, d\}^{\mathbb{N}}$. Considering the ``backward'' dynamics where $m=0$, each point $x$ has exactly $d^n$ distinct pre-images under $T^n$. Each such pre-image $y$ corresponds to an element $(x, -n, y) \in G_x$ of length $L = n$. Thus, the size of the ball of radius $N$ in the fiber satisfies
    \[
    |B_{G_x}(N)| \geq \sum_{n=0}^N d^n = \frac{d^{N+1}-1}{d-1}.
    \]
    Since $d \geq 2$, this grows exponentially. Consequently, $G$ does not have polynomial growth. While $G$ naturally has polynomial growth when $T$ is a global homeomorphism ($G \cong \mathbb{Z} \ltimes_T X$), the presence of persistent branching leads to exponential growth, showing that Theorem \ref{teo:pol-growth} does not apply to groupoids associated with Cuntz-like algebras.
\end{example}

\begin{remark}[Polynomial growth with non-injective dynamics]
    It is worth noting that if the branching of the pre-images is not persistent, a Deaconu-Renault groupoid can have polynomial growth even when the local homeomorphism $T$ is not injective. 
    
    Consider the discrete space $X = \{a, b_0, b_1, b_2, \dots\}$. Define a map $T \colon X \to X$ by $T(a) = a$, $T(b_0) = a$, and $T(b_n) = b_{n-1}$ for all $n \ge 1$. Since $X$ is discrete, $T$ is trivially a local homeomorphism. Furthermore, $T$ is surjective but not injective, because $T(a) = T(b_0) = a$.
    
    However, the branching in the pre-image tree of $T$ does not multiply. The pre-images of $a$ under $T^N$ form the set $\{a, b_0, b_1, \dots, b_{N-1}\}$, which has exactly $N+1$ elements. For any other point $b_k$, the number of pre-images is even smaller (in fact, it is exactly $1$). Consequently, the maximum number of pre-images of any point under $T^N$ grows only linearly with $N$. 
    
    When we construct the Deaconu-Renault groupoid $G$ associated with $T$, the balls in the fibers will only exhibit linear growth, meaning $G$ has polynomial growth of degree 1. This happens despite $T$ failing to be a global homeomorphism. This example highlights precisely why the assumption of persistent branching (such as $T$ being a $d$-to-1 covering map) is essential to obtain exponential growth and obstruct the Rapid Decay Property.
\end{remark}

\begin{proposition}[Obstruction to RDP for branching dynamics]\label{prop:ObstructionRDP}
    Let $X$ be a locally compact Hausdorff space and $T \colon X \to X$ a $d$-to-1 covering map with $d \geq 2$. Let $G$ be the associated Deaconu-Renault groupoid equipped with the natural length function $L(x,k,y) = \min\{m+n : k = m-n, \, T^m(x)=T^n(y)\}$. Then $G$ does not have the Rapid Decay Property.
\end{proposition}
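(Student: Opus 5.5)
The plan is to argue by contradiction, using a sequence of explicit test sections whose reduced norm grows like $d^n$ while their Sobolev norms grow only like $d^{n/2}$ times a polynomial in $n$. Suppose the trivial line bundle over $G$ has RDP with constants $C>0$ and $p$, so that $\|f\|_r\le C\|f\|_{2,p,L}$ for all $f\in C_c(G)$. For each $n\ge 1$ I will consider the set
\[
R_n:=\{(y,0,y')\in G:\ T^n(y)=T^n(y')\}.
\]
It is open in $G$, being the image of the basic open set $\{(y,n,n,y')\}$ of the Deaconu--Renault topology. Fix $x_0\in X$ and a cutoff $\varphi\in C_c(X)$ with $0\le\varphi\le 1$ and $\varphi(x_0)=1$. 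Define
\[
a_n(y,0,y'):=\varphi(T^n y)\quad\text{on } R_n,
\qquad a_n:=0 \text{ elsewhere}.
\]
The first step is to check that $a_n\in C_c(G)$. Continuity holds because $a_n$ is the composition of $\varphi$ with the continuous map $(y,0,y')\mapsto T^n(y)$ on the open set $R_n$. Compact support should follow from the fact that $T^n$ is a finite-degree covering map, hence proper, so that $\{(y,y'): T^n y=T^n y'\in\supp\varphi\}$ is compact in $R_n$. I expect this point-set step to be the main technical obstacle; if it causes trouble I would instead replace $\varphi$ by a finite sum over local bisections of $T^n$ above a small evenly covered neighbourhood of $x_0$.

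Next I estimate the Sobolev norm from above. Let $(y,0,y')\in\supp a_n$. Then $T^n y=T^n y'$, which witnesses $L(y,0,y')\le 2n$. Moreover, for fixed $y$ there are exactly $d^n$ points $y'$ with $T^n y'=T^n y$, and similarly for fixed $y'$. Since $|a_n|\le 1$, the definitions of $\|\cdot\|_{2,p,s,L}$ and $\|\cdot\|_{2,p,r,L}$ give
\[
\|a_n\|_{2,p,L}\le d^{n/2}(1+2n)^p .
\]

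Then I compute the reduced norm from below. Because $a_n$ is real-valued and $R_n$ is symmetric, $a_n^*=a_n$. For $(y,0,y'')\in R_n$,
\[
(a_n*a_n)(y,0,y'')=\sum_{y':\,T^ny'=T^ny}\varphi(T^ny)^2=d^n\varphi(T^n y)^2 .
\]
Evaluating at the unit $x_1$, for any $x_1$ with $T^n x_1=x_0$, gives $(a_n*a_n)(x_1)=d^n$. By \eqref{eq:norm-inequalities}, $\|a_n*a_n\|_r\ge\|a_n*a_n\|_\infty\ge d^n$. The $C^*$-identity then gives
\[
\|a_n\|_r^2=\|a_n^**a_n\|_r\ge d^n,
\qquad\text{so}\qquad \|a_n\|_r\ge d^{n/2}.
\]
This alone is not enough, so I sharpen it. When $\varphi$ is replaced by $\varphi_0$ with $\varphi_0\equiv 1$ on a neighbourhood of $x_0$, the computation above gives $a_n*a_n=d^n a_n$ near the fibre over $x_0$. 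Restricting the regular representation at $x_1$ to $\ell^2$ of the $d^n$-element set $\{(y',0,x_1): T^n y'=x_0\}$, the operator $\lambda_{x_1}(a_n)$ is the all-ones $d^n\times d^n$ matrix on that block. Its norm is $d^n$, so $\|a_n\|_r\ge d^n$.

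Combining the two estimates with the assumed RDP inequality gives $d^n\le C\,d^{n/2}(1+2n)^p$ for all $n$. Since $d\ge 2$, the left side grows exponentially faster than the right, which is impossible. Hence $G$ does not have the Rapid Decay Property. The remaining care points are the compact support of $a_n$ and the block computation of $\lambda_{x_1}(a_n)$; the latter is a finite linear-algebra check once the cutoff equals $1$ near $x_0$.
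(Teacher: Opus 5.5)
Your argument is correct, but it takes a different route from the paper. The paper argues structurally. It passes to the clopen kernel $R=c^{-1}(0)$ and uses that RDP passes to open subgroupoids (Proposition~\ref{prop:RD-open-subgroupoid}). It then notes $|B_{R_x}(2N)|\ge d^N$ and invokes Weygandt's theorem that a principal \'etale groupoid has RDP only if it has polynomial growth.

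You instead produce explicit witnesses in $C_c(G)$. On the open pieces $R_n$, the functions $a_n$ satisfy $\|a_n\|_{2,p,L}\le d^{n/2}(1+2n)^p$. The compression of $\lambda_{x_1}(a_n)$ to $\ell^2$ of the $d^n$-point set $\{(y',0,x_1):T^ny'=x_0\}$ is the all-ones matrix, so $\|a_n\|_r\ge d^n$. In effect you reprove, for this groupoid, the part of Weygandt's result that is needed: a full $N\times N$ block has operator norm $N$ but $\ell^2$ row norms only $\sqrt N$.

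The paper's route is shorter and more conceptual, but it depends on the principality of $R$ and on matching the hypotheses of an external theorem. Yours is self-contained and works directly in the regular representation of $G$, without the heredity proposition. It also gives the explicit gap $\|a_n\|_r/\|a_n\|_{2,p,L}\ge d^{n/2}(1+2n)^{-p}$.

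The step you flagged is harmless. $T^n$ is a finite-sheeted covering of locally compact Hausdorff spaces, hence proper. Also, $R_n$ carries the relative product topology of $\{(y,y'):T^ny=T^ny'\}$. So $\supp a_n$ lies in the compact set $\{(y,y'):T^ny=T^ny'\in\supp\varphi\}\subseteq (T^n)^{-1}(\supp\varphi)\times(T^n)^{-1}(\supp\varphi)$, and extension by zero is continuous because $G$ is Hausdorff.

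Finally, $\varphi(x_0)=1$ alone already makes that block the all-ones matrix. So neither $\varphi\equiv 1$ near $x_0$ nor the preliminary $C^*$-identity bound is needed.
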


\begin{proof}
    Consider the canonical continuous cocycle $c \colon G \to \mathbb{Z}$ given by $c(x, k, y) = k$. Its kernel $R = c^{-1}(0)$ is an open and closed subgroupoid of $G$. The elements of $R$ are exactly the triples $(x, 0, y)$ such that $T^n(x) = T^n(y)$ for some $n \in \mathbb{N}_0$. Thus, $R$ is an equivalence relation, which means it is a principal groupoid.
    
    Suppose for a contradiction that $G$ has the Rapid Decay Property with respect to $L$. Since $R$ is an open subgroupoid of $G$, it must inherit the Rapid Decay Property with respect to the restricted length function $L|_R$. 
    
    Let us analyze the growth of $R$. The restriction of $L$ to $R$ is given by
    \[
    L(x, 0, y) = \min \{ 2n : T^n(x) = T^n(y) \}.
    \]
    Fix a point $x \in X$ and let $N \in \mathbb{N}$. The closed ball of radius $2N$ in the source fiber $R_x$ is
    \[
    B_{R_x}(2N) = \{ (x, 0, y) \in R : L(x, 0, y) \leq 2N \}.
    \]
    Notice that for any $y \in X$ satisfying $T^N(y) = T^N(x)$, the triple $(x, 0, y)$ belongs to $R$ and has length $L(x, 0, y) \leq 2N$. Let $z = T^N(x)$. Because $T$ is a covering map of degree $d$, the $N$-th iterate $T^N$ is a covering map of degree $d^N$. Therefore, the point $z$ has exactly $d^N$ distinct pre-images in $X$. This means there are $d^N$ choices for $y$, all of which are contained in $B_{R_x}(2N)$. 
    
    Consequently, $|B_{R_x}(2N)| \geq d^N$, which shows that $R$ has exponential growth. 
    
    However, Weygandt \cite{Weygandt2024} proved that for principal Hausdorff \'etale groupoids, the Rapid Decay Property is equivalent to polynomial growth. Since $R$ has exponential growth, it cannot satisfy the RDP. This contradicts our assumption that $G$ has RDP, completing the proof.
\end{proof}

\begin{example}[Directed graphs and Cuntz-Krieger groupoids]
    The dichotomy of growth for Deaconu-Renault groupoids is particularly striking in the context of directed graphs. Let $E$ be a finite directed graph with no sinks, and let $X = E^\infty$ be the locally compact Hausdorff space of infinite paths. The shift map $\sigma \colon E^\infty \to E^\infty$, given by $\sigma(e_1 e_2 e_3 \dots) = e_2 e_3 \dots$, is a surjective local homeomorphism. The associated Deaconu-Renault groupoid $G_E$ is the path groupoid whose reduced $C^*$-algebra is exactly the graph $C^*$-algebra $C^*(E)$.

    The Rapid Decay Property for $G_E$ is highly restricted and depends entirely on the branching structure of the graph $E$:
    \begin{enumerate}
    \item \emph{No persistent branching (Polynomial growth):} Suppose $E$ consists of a single vertex and a single loop. Then $E^\infty$ is a single point, $\sigma$ is the identity map, and $G_E \cong \mathbb{Z}$. Since $\mathbb{Z}$ has polynomial growth, $G_E$ has the RDP. More generally, if $E$ is a finite graph where no cycle has an exit, the classical structure theory of graph algebras dictates that $C^*(E)$ is Morita equivalent to a commutative $C^*$-algebra (specifically, a finite direct sum of copies of $C(\mathbb{T})$). Dynamically, this means the pre-image trees under the shift $\sigma$ do not multiply persistently. The fibers of the groupoid exhibit bounded (hence polynomial) growth, allowing the RDP to hold.        
        \item \emph{Persistent branching (Exponential growth):} Suppose $E$ contains a vertex that emits multiple edges within a strongly connected component. A canonical example is the graph with one vertex and $d \ge 2$ loops. Here, $C^*(E)$ is the Cuntz algebra $\mathcal{O}_d$, and the shift map $\sigma$ is exactly a $d$-to-1 covering map on $E^\infty$. By Proposition~\ref{prop:ObstructionRDP}, the persistent branching of the paths forces the principal subgroupoid $R = \ker(c)$ to have exponential growth. Consequently, $G_E$ completely fails the Rapid Decay Property. 
    \end{enumerate}
    This application illustrates that within the rich class of graph $C^*$-algebras, the Rapid Decay Property essentially separates ``abelian-like'' dynamics from purely infinite (Cuntz-Krieger) dynamics.
\end{example}

\begin{remark}
Recent work by Austad, Ortega, and Palmstrøm \cite{AustadOrtegaPalmstrom2025} constructs specific shifts of ``infinite type'', which admit a length function leading to strong subexponential growth. This highlights that the RDP for Renault-Deaconu groupoids is not a uniform property; rather, it is governed by the topological entropy and the specific branching pattern of the shift map. While our work focuses on the exponential regime—a characteristic feature of finite-type dynamics—their examples explore the critical boundary of subexponential growth, where the RDP may still be attainable.
\end{remark}

\begin{remark}
The exponential growth exhibited by the $d$-to-1 Deaconu-Renault groupoid is not incidental. As noted in \cite{AustadOrtegaPalmstrom2025}, for minimal and effective groupoids with compact unit space, the pure infiniteness of $C^*_r(G)$ forces exponential growth of the groupoid (with respect to any locally bounded length function). Since the $d$-to-1 dynamics typically give rise to purely infinite algebras, the resulting exponential growth acts as a barrier to the RDP, reinforcing the link between the pure infiniteness of the algebra and the geometric growth of the underlying groupoid.
\end{remark}

\section{Continuity and locality of the $\ell^2$-norm}\label{sec:continuity-locality}

In this section we establish two elementary facts that will be used later in the
proof of the local approximation theorem. First, the fiberwise sums defining the
$\ell^2$-norm vary continuously over the unit space. Second, if an $\ell^2$-section
is supported in an open subset, then it can be approximated in $\ell^2$-norm by
compactly supported sections inside that open set.

Let $E\to G$ be a Fell bundle over a locally compact Hausdorff \'etale groupoid $G$.

\subsection{The source and range $\ell^2$-norms}

For $f\in C_c(E)$, we define
\[
\|f\|_{2,s}
:=
\sup_{x\in G^{(0)}}
\Big\|
\sum_{\gamma\in G_x} f(\gamma)^*f(\gamma)
\Big\|^{1/2},
\qquad
\|f\|_{2,r}
:=
\sup_{x\in G^{(0)}}
\Big\|
\sum_{\gamma\in G^x} f(\gamma)f(\gamma)^*
\Big\|^{1/2}.
\]
Thus
\[
\|f\|_{II}=\max\{\|f\|_{2,s},\|f\|_{2,r}\},
\]
in agreement with Section~\ref{sec:RD-Fell-bundles}. Also,
\[
\|f\|_{2,r}=\|f^*\|_{2,s}.
\]

We denote by $\ell^2_s(E)$ the completion of $C_c(E)$ with respect to
$\|\cdot\|_{2,s}$, and by $\ell^2_r(E)$ the completion with respect to
$\|\cdot\|_{2,r}$.

For $f\in \ell^2_s(E)$ and $x\in G^{(0)}$, we write
\[
T_f(x):=\sum_{\gamma\in G_x} f(\gamma)^*f(\gamma)\in E_x.
\]
Likewise, for $f\in \ell^2_r(E)$, we write
\[
S_f(x):=\sum_{\gamma\in G^x} f(\gamma)f(\gamma)^*\in E_x.
\]

\begin{remark}
The norm $\|f\|_{2,s}$ is the one induced by the usual
$C_0(G^{(0)})$-valued inner product
\[
\langle \xi,\eta\rangle_s(x):=\sum_{\gamma\in G_x}\xi(\gamma)^*\eta(\gamma),
\]
so $\ell^2_s(E)$ is the usual source-side Hilbert-module completion of $C_c(E)$.
Similarly, $\ell^2_r(E)$ is obtained from the corresponding range-side norm.
\end{remark}

\subsection{Continuity of the fiberwise sums}

\begin{proposition}\label{prop:Tf-continuous}
For every $f\in \ell^2_s(E)$, the map
\[
G^{(0)}\ni x\longmapsto T_f(x)=\sum_{\gamma\in G_x} f(\gamma)^*f(\gamma)\in E_x
\]
is a continuous section of the unit $C^*$-bundle $E^{(0)}\to G^{(0)}$.

Similarly, for every $f\in \ell^2_r(E)$, the map
\[
G^{(0)}\ni x\longmapsto S_f(x)=\sum_{\gamma\in G^x} f(\gamma)f(\gamma)^*\in E_x
\]
is continuous.
\end{proposition}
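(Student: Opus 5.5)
The plan is to prove continuity first for $f\in C_c(E)$ and then pass to $\ell^2_s(E)$ by a uniform approximation argument. The case of $S_f$ follows from the case of $T_f$ by the identity $S_f=T_{f^*}$, using $\|f\|_{2,r}=\|f^*\|_{2,s}$ and the fact that $f\mapsto f^*$ extends to an isometry $\ell^2_r(E)\to\ell^2_s(E)$.

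\emph{Step 1: $f\in C_c(E)$.} I would use the standard observation that $T_f=\langle f,f\rangle_s$ is the restriction to $G^{(0)}$ of the convolution $f^**f\in C_c(E)$:
\[
(f^**f)(x)=\sum_{\gamma\in G_x}f(\gamma^{-1})^*\cdots
\]
More precisely, $(f^**f)(x)=\sum_{r(\eta)=x}f^*(\eta)f(\eta^{-1})=\sum_{\gamma\in G_x}f(\gamma)^*f(\gamma)$. Since $f^**f$ is a continuous compactly supported section of $E$ and $G^{(0)}$ is open and closed in $G$ (as $G$ is \'etale and Hausdorff), its restriction is a continuous section of $E^{(0)}$. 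If one prefers a direct argument, I would cover $\supp f$ by finitely many open bisections $U_1,\dots,U_n$ and use a partition of unity $f=\sum_i f_i$ with $\supp f_i\subseteq U_i$. Then
\[
T_f(x)=\sum_{i,j}\sum_{\gamma\in G_x}f_i(\gamma)^*f_j(\gamma),
\]
and each inner sum has at most one term, namely $f_i(\gamma)^*f_j(\gamma)$ with $\gamma=(s|_{U_i})^{-1}(x)$, provided that this element also lies in $U_j$. Continuity of each term follows from continuity of multiplication and involution in the Fell bundle, together with the homeomorphism $s|_{U_i}$. The only delicate point is near the boundary of $s(U_i\cap U_j)$, where the term tends to $0$ because the functions vanish there. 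This is exactly why the convolution viewpoint is cleaner, so I would cite continuity of $f^**f$.

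\emph{Step 2: density.} For $f\in\ell^2_s(E)$, I would choose $f_n\in C_c(E)$ with $\|f_n-f\|_{2,s}\to0$. The key estimate is a uniform bound in $x$:
\[
\|T_f(x)-T_{f_n}(x)\|=\|\langle f,f\rangle_s(x)-\langle f_n,f_n\rangle_s(x)\|\le \big(\|f\|_{2,s}+\|f_n\|_{2,s}\big)\|f-f_n\|_{2,s}.
\]
This follows by writing the difference as $\langle f-f_n,f\rangle_s+\langle f_n,f-f_n\rangle_s$ and applying the Cauchy--Schwarz inequality for Hilbert modules fiberwise. Here I use that the fiber at $x$ is the $E_x$-valued Hilbert module $\ell^2(G_x,E)$, whose norm is dominated by $\|\cdot\|_{2,s}$. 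Hence $T_{f_n}\to T_f$ uniformly on $G^{(0)}$, and a uniform limit of continuous sections of a continuous (Banach) bundle is continuous. I would verify this by the usual $\varepsilon/3$ argument, using the upper semicontinuity, or continuity, of the norm on $E$.

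The main obstacle is giving meaning to $T_f(x)$ for $f$ in the abstract completion. I would identify elements of $\ell^2_s(E)$ with sections $\gamma\mapsto f(\gamma)$, and interpret $\sum_{\gamma\in G_x}f(\gamma)^*f(\gamma)$ as a norm-convergent series in $E_x$. Convergence holds because the evaluation $\ell^2_s(E)\to\ell^2(G_x,E)$ is contractive, and the sum equals $\langle f,f\rangle$ computed in that fiber. With this identification, the estimate in Step 2 is legitimate and the proof closes.
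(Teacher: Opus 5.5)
Your proposal is correct and follows essentially the same route as the paper. Both arguments approximate $f$ by $f_n\in C_c(E)$ in $\|\cdot\|_{2,s}$, split $T_{f_n}(x)-T_f(x)=\langle f_n,f_n-f\rangle_s(x)+\langle f_n-f,f\rangle_s(x)$, and apply fiberwise Cauchy--Schwarz to get the uniform bound $(\|f_n\|_{2,s}+\|f\|_{2,s})\|f_n-f\|_{2,s}$, with $S_f=T_{f^*}$ handled via the involution. Beyond the paper, you also justify continuity of $T_{f_n}$ through $T_{f_n}=(f_n^* * f_n)|_{G^{(0)}}$ and explain the meaning of $T_f(x)$ as a norm-convergent series in $E_x$, two points the paper leaves implicit.
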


\begin{proof}
We prove the source statement; the range statement follows by applying the same
argument to $f^*$.

Let $f\in \ell^2_s(E)$, and choose a sequence $(f_n)_n\subseteq C_c(E)$ such that
\[
\|f_n-f\|_{2,s}\to 0.
\]
For each $n$, the section
\[
T_{f_n}(x):=\sum_{\gamma\in G_x} f_n(\gamma)^*f_n(\gamma)
\]
is continuous.

Fix $x\in G^{(0)}$. Then
\begin{align*}
T_{f_n}(x)-T_f(x)
&=
\sum_{\gamma\in G_x}\bigl(f_n(\gamma)^*f_n(\gamma)-f(\gamma)^*f(\gamma)\bigr)\\
&=
\sum_{\gamma\in G_x} f_n(\gamma)^*(f_n(\gamma)-f(\gamma))
+\sum_{\gamma\in G_x}(f_n(\gamma)-f(\gamma))^*f(\gamma).
\end{align*}
By the $C^*$-valued Cauchy--Schwarz inequality,
\begin{align*}
\|T_{f_n}(x)-T_f(x)\|
&\leq
\Big\|
\sum_{\gamma\in G_x} f_n(\gamma)^*f_n(\gamma)
\Big\|^{1/2}
\Big\|
\sum_{\gamma\in G_x}(f_n(\gamma)-f(\gamma))^*(f_n(\gamma)-f(\gamma))
\Big\|^{1/2}\\
&\quad+
\Big\|
\sum_{\gamma\in G_x}(f_n(\gamma)-f(\gamma))^*(f_n(\gamma)-f(\gamma))
\Big\|^{1/2}
\Big\|
\sum_{\gamma\in G_x} f(\gamma)^*f(\gamma)
\Big\|^{1/2}.
\end{align*}
Hence
\[
\|T_{f_n}(x)-T_f(x)\|
\leq
\bigl(\|f_n\|_{2,s}+\|f\|_{2,s}\bigr)\|f_n-f\|_{2,s}.
\]
Taking the supremum over $x\in G^{(0)}$ gives
\[
\sup_{x\in G^{(0)}}\|T_{f_n}(x)-T_f(x)\|
\leq
\bigl(\|f_n\|_{2,s}+\|f\|_{2,s}\bigr)\|f_n-f\|_{2,s}\to 0.
\]
Thus $T_{f_n}\to T_f$ uniformly. Since each $T_{f_n}$ is continuous, so is $T_f$.
\end{proof}

\subsection{Locality in $\ell^2_s(E)$}

\begin{lemma}\label{lem:l2-locality-source}
Let $U\subseteq G$ be open, and let $f\in \ell^2_s(E)$ satisfy
\[
\supp(f)\subseteq U.
\]
Then
\[
f\in \overline{C_c(E|_U)}^{\|\cdot\|_{2,s}}.
\]
\end{lemma}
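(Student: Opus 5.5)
To prove Lemma~\ref{lem:l2-locality-source}, I would use a cutoff argument. First make the function $T_f$ small outside a compact set of the unit space, then localize inside $U$ with a partition of unity subordinate to open bisections, and finally approximate by elements of $C_c(E)$ supported in $U$. The first step is to represent elements of $\ell^2_s(E)$ as sections of $E$ vanishing at infinity, so that $\supp(f)$ makes sense. An element $f$ is the $\|\cdot\|_{2,s}$-limit of a sequence $f_n\in C_c(E)$. By \eqref{eq:norm-inequalities}, $\|\cdot\|_\infty\le\|\cdot\|_{2,s}$ pointwise on each fibre, so the $f_n$ converge uniformly to a section $f$, and $f$ is continuous. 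By Proposition~\ref{prop:Tf-continuous}, $T_f$ is continuous. It also vanishes at infinity, because it is a uniform limit of the compactly supported functions $T_{f_n}$.

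Fix $\varepsilon>0$. Choose a compact set $K_0\subseteq G^{(0)}$ such that $\|T_f(x)\|<\varepsilon^2$ for $x\notin K_0$. The ``tail'' of $f$ in a fibre $G_x$ with $x\in K_0$ also has to be controlled. For this I would go back to an approximant $g\in C_c(E)$ with $\|f-g\|_{2,s}<\varepsilon$, and put $K:=\supp(g)$, which is compact. Then for every $x$,
\[
\Big\|\sum_{\gamma\in G_x\setminus K} f(\gamma)^*f(\gamma)\Big\|
=\Big\|\sum_{\gamma\in G_x\setminus K}(f-g)(\gamma)^*(f-g)(\gamma)\Big\|\le\varepsilon^2 .
\]
Hence $f\cdot 1_K$ approximates $f$ to within $\varepsilon$ in $\|\cdot\|_{2,s}$. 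The difficulty is that $1_K$ is not continuous and $K$ need not lie in $U$.

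The localization step is the main obstacle. The compact set $K\cap\supp(f)$ need not be contained in $U$, because $\supp(f)$ is the open set $\{f\ne0\}$ and its closure may leave $U$. I would handle this with a cutoff $\phi_\delta\colon G\to[0,1]$ defined as a continuous function of $\|f(\gamma)\|$: $\phi_\delta(\gamma)=0$ when $\|f(\gamma)\|\le\delta$ and $\phi_\delta(\gamma)=1$ when $\|f(\gamma)\|\ge 2\delta$. Then $\{\phi_\delta\ne0\}\cap K$ lies in the compact set $\{\gamma\in K:\|f(\gamma)\|\ge\delta\}$, which is contained in $\supp(f)\subseteq U$. Choose $\chi\in C_c(G)$ with $0\le\chi\le1$, $\chi=1$ on a neighbourhood of $K$, and covering $K$ by finitely many open bisections. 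Set $h=\chi\phi_\delta f$. This is continuous, compactly supported, and has support in $U$, so $h\in C_c(E|_U)$. To estimate $\|f-h\|_{2,s}$, split $f-h$ into the part outside $K$, which is $O(\varepsilon)$ by the display above, and the part on $K$ where $\|f\|<2\delta$. Because $K$ is covered by $N$ bisections, the number of points of $K\cap G_x$ is at most $N$, uniformly in $x$. So the second part contributes at most $N(2\delta)^2$ in the sum, and choosing $\delta$ small relative to $N$ and $\varepsilon$ makes it small. The factor $(1-\chi\phi_\delta)\in[0,1]$ is scalar, so multiplying by it does not increase any of the positive sums $\sum f^*f$.

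These estimates combine to give $\|f-h\|_{2,s}\lesssim\varepsilon$ with $h\in C_c(E|_U)$, which proves $f\in\overline{C_c(E|_U)}^{\|\cdot\|_{2,s}}$. The only delicate points are the fact that $T_f$ vanishes at infinity (proved above) and the uniform bound on $|K\cap G_x|$ coming from the étale structure.
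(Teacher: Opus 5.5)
Your proof is correct, and it departs from the paper's at exactly the delicate step. Both arguments begin the same way. You approximate $f$ by $g\in C_c(E)$ with $\|f-g\|_{2,s}<\varepsilon$, put $K=\supp(g)$, and use that $f=f-g$ off $K$, so any scalar cutoff with values in $[0,1]$costs at most $\varepsilon$ there.

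The paper then picks $h\in C_c(U)$ with $h\equiv1$ near $K\cap\supp(f)$, asserting that this set is compact. With $\supp(f)=\{f\neq0\}$ (the definition in Section~\ref{sec:preliminaries}, and the one Section~\ref{sec:localizability} relies on), that assertion is false in general. Take $G=G^{(0)}=\mathbb{R}$ with the trivial line bundle, $U=(0,1)$, and $f=g$ a tent function on $[0,1]$. Then $K\cap\supp(f)=(0,1)$, and no such $h$ exists. So the paper's route is shorter but only works when $K\cap\overline{\{f\neq0\}}\subseteq U$.

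Your truncation at level $\delta$, combined with the \'etale bound $|K\cap G_x|\le N$ from finitely many bisections, is exactly what handles the open-support hypothesis. The cost is an error $4N\delta^2$, which you can make small once $K$ is fixed. One correction to your wording: the problem is not that $K\cap\supp(f)$ leaves $U$ (it does not), but that it need not be compact.

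Some of your steps are not actually needed. The compact set $K_0$ and the vanishing of $T_f$ at infinity are never used. The cutoff $\chi$ is also unnecessary, since $\{\gamma:\|f(\gamma)\|\ge\delta\}$ is already compact for $f\in C_0(E)$.

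One point needs a fix in general. Your $\phi_\delta$ is continuous only if $\gamma\mapsto\|f(\gamma)\|$ is, i.e.\ only if $E$ is a continuous Banach bundle. If Fell bundles are allowed to be merely upper semicontinuous, replace $\chi\phi_\delta$ by an Urysohn function $\psi\in C_c(U)$ with $0\le\psi\le1$ and $\psi\equiv1$ on the compact set $\{\gamma\in K:\|f(\gamma)\|\ge\delta\}$. That set is closed by upper semicontinuity and lies in $\{f\neq0\}\subseteq U$. The same estimate then gives $\|f-\psi f\|_{2,s}^2\le\varepsilon^2+N\delta^2$.
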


\begin{proof}
Fix $\varepsilon>0$.
Choose $g\in C_c(E)$ such that
\[
\|f-g\|_{2,s}<\varepsilon.
\]
Let $K:=\supp(g)$, which is compact.

Since $\supp(f)\subseteq U$ and $K\cap \supp(f)$ is compact, there exists
$h\in C_c(U)$ with $0\leq h\leq 1$ and
\[
h\equiv 1\quad\text{on a neighborhood of }K\cap \supp(f).
\]
Define
\[
\xi:=hf\in C_c(E|_U).
\]

For every $x\in G^{(0)}$,
\[
T_{f-\xi}(x)
=
\sum_{\gamma\in G_x}(1-h(\gamma))^2f(\gamma)^*f(\gamma).
\]
Now $h\equiv 1$ on a neighborhood of $K\cap \supp(f)$, so
\[
\{\gamma\in \supp(f):h(\gamma)\neq 1\}\cap K=\varnothing.
\]
Hence on that set one has $g(\gamma)=0$, and therefore
\[
(1-h(\gamma))f(\gamma)=(1-h(\gamma))(f(\gamma)-g(\gamma)).
\]
It follows that
\[
T_{f-\xi}(x)
=
\sum_{\gamma\in G_x}(1-h(\gamma))^2(f(\gamma)-g(\gamma))^*(f(\gamma)-g(\gamma)).
\]
Since $0\leq 1-h\leq 1$, we obtain
\[
\|T_{f-\xi}(x)\|
\leq
\Big\|
\sum_{\gamma\in G_x}(f(\gamma)-g(\gamma))^*(f(\gamma)-g(\gamma))
\Big\|.
\]
Taking the supremum over $x$ yields
\[
\|f-\xi\|_{2,s}\leq \|f-g\|_{2,s}<\varepsilon.
\]
Thus $\xi\in C_c(E|_U)$ and $\|f-\xi\|_{2,s}<\varepsilon$.
\end{proof}

\begin{corollary}\label{cor:l2-locality-II}
Let $U\subseteq G$ be open, and let $f$ belong to the completion of $C_c(E)$ with
respect to $\|\cdot\|_{II}$.
Assume that
\[
\supp(f)\subseteq U.
\]
Then
\[
f\in \overline{C_c(E|_U)}^{\|\cdot\|_{II}}.
\]
\end{corollary}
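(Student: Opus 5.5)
The plan is to rerun the cutoff argument of Lemma~\ref{lem:l2-locality-source}, this time estimating the source and range norms together with one cutoff function. We do not apply the lemma twice separately, because that would produce two different approximants.

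\emph{Setup.} Since $\|\cdot\|_{II}$ dominates both $\|\cdot\|_{2,s}$ and $\|\cdot\|_{2,r}$, an element $f$ of the $\|\cdot\|_{II}$-completion is simultaneously an element of $\ell^2_s(E)$ and of $\ell^2_r(E)$, represented by the same section of $E$. Its support is therefore well defined.

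\emph{Step 1: choose the cutoff.} Fix $\varepsilon>0$ and pick $g\in C_c(E)$ with $\|f-g\|_{II}<\varepsilon$. Put $K:=\supp(g)$. Exactly as in the proof of Lemma~\ref{lem:l2-locality-source}, choose $h\in C_c(U)$ with $0\le h\le 1$ and $h\equiv 1$ on a neighbourhood of $K\cap\supp(f)$. Set $\xi:=hf$.

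\emph{Step 2: $\xi$ lies in $C_c(E|_U)$.} Here $f$ is continuous, because it is a $\|\cdot\|_\infty$-limit of elements of $C_c(E)$ (using $\|\cdot\|_\infty\le\|\cdot\|_{II}$). Hence $\xi=hf$ is a continuous section supported in the compact set $\supp(h)\subseteq U$.

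\emph{Step 3: the pointwise identity.} As in the lemma, on every $\gamma$ where $h(\gamma)\ne1$ we have either $f(\gamma)=0$ or $\gamma\notin K$. In both cases
\[
(1-h(\gamma))f(\gamma)=(1-h(\gamma))(f(\gamma)-g(\gamma)).
\]
This is an identity of sections, so it can be used on both the source side and the range side.

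\emph{Step 4: the source estimate.} The proof of the lemma gives $\|f-\xi\|_{2,s}\le\|f-g\|_{2,s}$ directly.

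\emph{Step 5: the range estimate.} On the range side we compute
\[
S_{f-\xi}(x)=\sum_{\gamma\in G^x}(1-h(\gamma))^2(f-g)(\gamma)(f-g)(\gamma)^*\le S_{f-g}(x).
\]
The inequality holds because $0\le(1-h)^2\le1$ and each term $(f-g)(\gamma)(f-g)(\gamma)^*$ is a positive element of $E_x$. Hence $\|f-\xi\|_{2,r}\le\|f-g\|_{2,r}$.

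\emph{Conclusion.} Taking the maximum of the two estimates gives $\|f-\xi\|_{II}\le\|f-g\|_{II}<\varepsilon$, which proves the corollary.

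\emph{Main obstacle.} The only delicate point is bookkeeping rather than analysis. The fiberwise sums $T_f$ and $S_f$ must make sense for elements of the completion, and the positivity comparison in Step 5 is done termwise. The sums converge because $f$ lies in both $\ell^2_s(E)$ and $\ell^2_r(E)$, and Proposition~\ref{prop:Tf-continuous} handles their meaning. Once this is granted, the argument is a direct rerun of Lemma~\ref{lem:l2-locality-source}.
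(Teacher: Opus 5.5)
Your single-cutoff argument takes a different route from the paper's proof, and on the key point it is the more careful one. The paper applies Lemma~\ref{lem:l2-locality-source} to $f$ and to $f^*$ separately and then just cites $\|\cdot\|_{II}=\max\{\|\cdot\|_{2,s},\|\cdot\|_{2,r}\}$. As written, that only places $f$ in the intersection of the $\|\cdot\|_{2,s}$- and $\|\cdot\|_{2,r}$-closures of $C_c(E|_U)$, and for two norms this intersection is in general larger than the closure for their maximum. The paper's argument can be patched, since the two approximants have the form $h_1f$ and $h_2f$ and $\max(h_1,h_2)f$ controls both errors, but the paper does not say this. Producing one approximant that satisfies both estimates, as you do, supplies exactly the missing step.

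By rerunning the lemma's proof, however, you also inherit its defects.

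\emph{Minor defect.} The identity in Step~3 fails at points where $f(\gamma)=0$, $g(\gamma)\neq0$ and $h(\gamma)\neq1$. What holds is the termwise inequality $(1-h)^2ff^*\le(1-h)^2(f-g)(f-g)^*$. So the equality in Step~5 must be replaced by an inequality, which is harmless.

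\emph{Genuine gap: Step~1.} With the convention $\supp(f)=\{\gamma:f(\gamma)\neq0\}$ from Section~\ref{sec:preliminaries}, the set $K\cap\supp(f)$ need not have compact closure inside $U$, and then no such $h$ exists. For example, take $G=G^{(0)}=\bR$ with the trivial line bundle, so that $\|\cdot\|_{II}=\|\cdot\|_\infty$. Let $U=(0,\infty)$, $f(t)=\max\{0,t(1-t)\}$ and $g=f$. Then $K\cap\supp(f)=(0,1)$, while every $h\in C_c(U)$ vanishes near $0$, so no admissible $h$ equals $1$ there.

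\emph{Repair by thresholding.} Let $K$ be the closed support of $g$.
\begin{enumerate}
\item Since $K$ is covered by finitely many open bisections, $N:=\sup_x\max\{|K\cap G_x|,|K\cap G^x|\}<\infty$.
\item For $\delta>0$, the set $K_\delta:=\{\gamma\in K:\|f(\gamma)\|\ge\delta\}$ is compact, because $\gamma\mapsto\|f(\gamma)\|$ is upper semicontinuous, and it is contained in $U$.
\item Take $h\in C_c(U)$ with $0\le h\le1$ and $h\equiv1$ near $K_\delta$, and write
\[
f-hf=\chi_{G\setminus K}(1-h)(f-g)+\chi_K(1-h)f.
\]
\item The first term is termwise dominated by $f-g$ on both the source and range sides.
\item The second term has, in each source or range fiber, at most $N$ nonzero values, each of norm $<\delta$.
\item Hence $\|f-hf\|_{II}\le\|f-g\|_{II}+\sqrt N\,\delta$, and choosing $\delta=\varepsilon/\sqrt N$ finishes the proof.
\end{enumerate}
This also repairs the lemma itself. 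If $\supp$ is read as the closed support, your Step~1 is fine as it stands.
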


\begin{proof}
By Lemma~\ref{lem:l2-locality-source}, we have
\[
f\in \overline{C_c(E|_U)}^{\|\cdot\|_{2,s}}.
\]
Applying the same lemma to $f^*$ gives
\[
f^*\in \overline{C_c(E|_U)}^{\|\cdot\|_{2,s}},
\]
hence
\[
f\in \overline{C_c(E|_U)}^{\|\cdot\|_{2,r}}.
\]
Since
\[
\|\,\cdot\,\|_{II}=\max\{\|\,\cdot\,\|_{2,s},\|\,\cdot\,\|_{2,r}\},
\]
the conclusion follows.
\end{proof}

\begin{corollary}\label{cor:schwartz-locality-II}
Let $L$ be a length function on $G$, let $U\subseteq G$ be open, and let
$f\in H^{2,L}(E)$ satisfy
\[
\supp(f)\subseteq U.
\]
Then, for every $p>0$,
\[
f\in \overline{C_c(E|_U)}^{\|\cdot\|_{2,p,L}}.
\]
\end{corollary}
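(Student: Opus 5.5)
The idea is to reduce this to Corollary~\ref{cor:l2-locality-II}, applied to the weighted section
\[
f_p(\gamma):=f(\gamma)(1+L(\gamma))^p .
\]
By definition $\|g\|_{2,p,L}=\|g_p\|_{II}$ for every $g\in C_c(E)$. Hence multiplication by the continuous, strictly positive weight $w_p:=(1+L)^p$ is an isometric isomorphism from $(C_c(E),\|\cdot\|_{2,p,L})$ onto $(C_c(E),\|\cdot\|_{II})$. It is onto $C_c(E)$ because $w_p^{-1}$ is also continuous, so multiplication by $w_p^{\pm1}$ preserves compactly supported continuous sections. It therefore extends to an isometric isomorphism
\[
M_p\colon H^{2,p,L}(E)\to \overline{C_c(E)}^{\|\cdot\|_{II}}.
\]

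The key steps are as follows.

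\textbf{Step 1.} Fix $p>0$. Note that $f\in H^{2,L}(E)\subseteq H^{2,p,L}(E)\cap C_0(G,E)$, and identify $M_p f$ with the pointwise product $w_pf$. To justify this identification, take $g_n\in C_c(E)$ with $g_n\to f$ in $\|\cdot\|_{2,p,L}$. Since $w_p\ge 1$, the sequence also converges in $\|\cdot\|_{2,0,L}=\|\cdot\|_{II}$, and hence uniformly, because $\|\cdot\|_\infty\le\|\cdot\|_{II}$ by \eqref{eq:norm-inequalities}. The limit of $g_n$ as a section is therefore $f$. Moreover, $w_pg_n$ is Cauchy in $\|\cdot\|_{II}$, so it converges uniformly to some section $F$. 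Since $w_pg_n\to w_pf$ pointwise, we get $F=w_pf$.

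\textbf{Step 2.} Observe that $\supp(w_pf)=\supp(f)\subseteq U$, because $w_p>0$ everywhere. Corollary~\ref{cor:l2-locality-II} then gives $\xi_n\in C_c(E|_U)$ with $\xi_n\to w_pf$ in $\|\cdot\|_{II}$.

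\textbf{Step 3.} Set $\eta_n:=w_p^{-1}\xi_n$. Then $\eta_n\in C_c(E|_U)$, since it has the same support as $\xi_n$ and is continuous. Using the isometry from the first paragraph,
\[
\|\eta_n-f\|_{2,p,L}=\|\xi_n-w_pf\|_{II}\to0,
\]
so $f$ lies in the $\|\cdot\|_{2,p,L}$-closure of $C_c(E|_U)$.

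The main obstacle is Step 1. The abstract completion $H^{2,p,L}(E)$ must be identified with actual sections, so that $\supp(f)$ and the pointwise product $w_pf$ make sense. This needs the injectivity of the natural map from the completion to sections, which is supplied by the $C_0$ condition in the definition of $H^{2,L}(E)$ together with $\|\cdot\|_\infty\le\|\cdot\|_{II}$. I would make this identification explicit, as in Step 1, before applying Corollary~\ref{cor:l2-locality-II}.
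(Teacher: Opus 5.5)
Your proof is correct and is essentially the paper's argument: weight $f$ by $(1+L)^p$, apply Corollary~\ref{cor:l2-locality-II} to get $\xi_n\in C_c(E|_U)$ converging to $f(1+L)^p$ in $\|\cdot\|_{II}$, and divide back by $(1+L)^p$, using that this weight turns $\|\cdot\|_{2,p,L}$ into $\|\cdot\|_{II}$ isometrically. Your Step~1 makes explicit what the paper leaves implicit, namely that $f(1+L)^p$ lies in the $\|\cdot\|_{II}$-completion and is represented by the pointwise product. One small correction to your closing remark: that step only needs the map from completions to sections to be well defined, which is what $\|\cdot\|_\infty\le\|\cdot\|_{II}$ gives, and the injectivity of that map comes from the fiberwise form of the norms rather than from the $C_0$ condition.
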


\begin{proof}
Fix $p>0$.
Since $f\in H^{2,L}(E)$, one has
\[
f(1+L)^p\in \ell^2_s(E)\cap \ell^2_r(E),
\]
and clearly
\[
\supp\bigl(f(1+L)^p\bigr)=\supp(f)\subseteq U.
\]
By Corollary~\ref{cor:l2-locality-II},
\[
f(1+L)^p\in \overline{C_c(E|_U)}^{\|\cdot\|_{II}}.
\]
Equivalently, there exists a net $(\xi_i)_i\subseteq C_c(E|_U)$ such that
\[
\|\xi_i-f(1+L)^p\|_{II}\to 0.
\]
Setting
\[
g_i:=\xi_i(1+L)^{-p}\in C_c(E|_U),
\]
we obtain
\[
\|g_i-f\|_{2,p,L}
=
\|\xi_i-f(1+L)^p\|_{II}\to 0.
\]
\end{proof}


\section{Partial actions and reduction to the group case}\label{sec:partial-actions-reduction}

In this section we show that, for groupoids arising from partial actions of
discrete groups, the Rapid Decay Property for Fell bundles reduces to the
corresponding property for a naturally associated Fell bundle over the acting
group.
This provides a useful source of examples and, at the same time, shows that in
this class the study of RDP may often be reduced to the group case.

The construction below is closely related to the general correspondence between
Fell bundles and cocycles (or groupoid fibrations) studied in the literature.
In our setting, however, we work directly with transformation groupoids coming
from partial actions and with \emph{reduced} cross-sectional algebras, which is
the situation needed here.

\subsection{Transformation groupoids from partial actions}

Let $\Gamma$ be a discrete group and let
\[
\theta=\bigl(\{D_\gamma\}_{\gamma\in\Gamma},\{\theta_\gamma\}_{\gamma\in\Gamma}\bigr)
\]
be a partial action of $\Gamma$ on a locally compact Hausdorff space $X$, in the
sense of Exel.
Thus each $D_\gamma\subseteq X$ is an open subset,
\[
\theta_\gamma\colon D_{\gamma^{-1}}\to D_\gamma
\]
is a homeomorphism, and the usual compatibility relations hold.

We denote by
\[
G=\Gamma\ltimes_\theta X
=
\{(\gamma,x)\in \Gamma\times X : x\in D_{\gamma^{-1}}\}
\]
the associated transformation groupoid.
Its unit space is naturally identified with $X$, and the structure maps are
given by
\[
s(\gamma,x)=x,
\qquad
r(\gamma,x)=\theta_\gamma(x),
\]
\[
(\gamma,\theta_\eta(x))(\eta,x)=(\gamma\eta,x),
\qquad
(\gamma,x)^{-1}=(\gamma^{-1},\theta_\gamma(x)).
\]
It is well known that $G$ is a locally compact Hausdorff \'etale groupoid.

There is a canonical continuous cocycle
\[
c\colon G\to \Gamma,
\qquad
c(\gamma,x)=\gamma,
\]
recording the group component.

Let $E=\{E_g\}_{g\in G}$ be a Fell bundle over $G$.
For the purposes of this section, no saturation hypothesis is needed.

\subsection{The associated Fell bundle over the group}

For each $\gamma\in\Gamma$, let
\[
G_\gamma:=c^{-1}(\{\gamma\})
=\{(\gamma,x)\in G:x\in D_{\gamma^{-1}}\}.
\]
Since $\Gamma$ is discrete, each $G_\gamma$ is an open and closed subset of $G$.

We define a bundle
\[
\widetilde E=\bigsqcup_{\gamma\in\Gamma}\widetilde E_\gamma
\]
over $\Gamma$ by setting
\[
\widetilde E_\gamma:=C_0(G_\gamma,E),
\]
the Banach space of continuous sections of $E|_{G_\gamma}$ vanishing at
infinity.
We now define multiplication and involution on $\widetilde E$ pointwise.

If $\xi\in \widetilde E_\gamma$ and $\eta\in \widetilde E_\delta$, define
\[
(\xi\eta)(\gamma\delta,x)
=
\xi(\gamma,\theta_\delta(x))\,\eta(\delta,x),
\qquad x\in D_{\delta^{-1}}\cap D_{(\gamma\delta)^{-1}},
\]
and if $\xi\in \widetilde E_\gamma$, define
\[
\xi^*(\gamma^{-1},y)
=
\xi(\gamma,\theta_{\gamma^{-1}}(y))^*,
\qquad y\in D_\gamma .
\]
These operations are well defined because the multiplication and involution in
the Fell bundle $E$ are defined fiberwise over the groupoid operations in $G$.
With these operations, $\widetilde E$ becomes a Fell bundle over the discrete
group $\Gamma$.

\begin{remark}
This construction is the transformation-groupoid instance of the general
procedure of pushing a Fell bundle along a cocycle $c\colon G\to\Gamma$.
In the present case, everything can be described explicitly, which makes it
convenient for applications to Rapid Decay.
\end{remark}

\subsection{The algebraic identification}

Define
\[
\Phi\colon C_c(G,E)\longrightarrow C_c(\Gamma,\widetilde E)
\]
by
\[
(\Phi f)(\gamma)=f|_{G_\gamma}.
\]
Equivalently, for $\gamma\in\Gamma$ and $(\gamma,x)\in G_\gamma$,
\[
(\Phi f)(\gamma)(\gamma,x)=f(\gamma,x).
\]
Since $\supp(f)$ is compact and $\Gamma$ is discrete, only finitely many fibers
$G_\gamma$ meet $\supp(f)$, so indeed $\Phi f\in C_c(\Gamma,\widetilde E)$.

The inverse map is obtained by assembling the finitely many sections
$\xi(\gamma)\in \widetilde E_\gamma$ into a single compactly supported section on
$G$.
Thus $\Phi$ is a linear bijection.

\begin{proposition}\label{phi-algebraic-iso}
The map $\Phi$ is a $*$-algebra isomorphism
\[
\Phi\colon C_c(G,E)\xrightarrow{\;\cong\;} C_c(\Gamma,\widetilde E).
\]
\end{proposition}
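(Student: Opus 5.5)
The plan is to verify directly that $\Phi$ preserves convolution and involution; linearity and bijectivity were already noted just before the statement. Throughout we use the description of composable pairs in $G=\Gamma\ltimes_\theta X$: a pair $(g,h)\in G^{(2)}$ with product $(\zeta,x)$ is exactly a pair $g=(\gamma,\theta_\delta(x))$, $h=(\delta,x)$ with $\gamma\delta=\zeta$ and $x\in D_{\delta^{-1}}\cap D_{\zeta^{-1}}$. The key observation is that, for fixed $(\zeta,x)$, this pair is determined by the choice of $\delta\in\Gamma$ such that $x\in D_{\delta^{-1}}$ and $\theta_\delta(x)\in D_{\gamma^{-1}}$. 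By the partial action axioms the second condition is automatic once $x\in D_{\delta^{-1}}\cap D_{\zeta^{-1}}$.

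For convolution, fix $f,g\in C_c(G,E)$ and $(\zeta,x)\in G_\zeta$. Since $G$ is étale, convolution is
\[
(f*g)(\zeta,x)=\sum_{(\zeta,x)=ab} f(a)g(b).
\]
Reparametrizing the sum by $\delta\in\Gamma$ as above (with $\gamma=\zeta\delta^{-1}$) gives
\[
(f*g)(\zeta,x)=\sum_{\delta}f(\zeta\delta^{-1},\theta_\delta(x))\,g(\delta,x),
\]
where the sum runs over those $\delta$ with $x\in D_{\delta^{-1}}$. On the other side,
\[
(\Phi f*\Phi g)(\zeta)=\sum_{\gamma\delta=\zeta}(\Phi f)(\gamma)(\Phi g)(\delta),
\]
a finite sum in $C_0(G_\zeta,E)$. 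Evaluating at $(\zeta,x)$ and using the definition of the product in $\widetilde E$, the $\delta$-th term is $f(\gamma,\theta_\delta(x))g(\delta,x)$ when $x\in D_{\delta^{-1}}\cap D_{\zeta^{-1}}$. Here one has to interpret $\xi\eta\in\widetilde E_{\gamma\delta}=C_0(G_{\gamma\delta},E)$ as extended by zero at points $x\in D_{\zeta^{-1}}\setminus D_{\delta^{-1}}$. This matches the groupoid sum term by term.

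Involution is similar and easier: $f^*(a)=f(a^{-1})^*$. For $a=(\gamma^{-1},y)$ we have $a^{-1}=(\gamma,\theta_{\gamma^{-1}}(y))$, which is exactly the formula defining $\xi^*$ in $\widetilde E$. Hence $\Phi(f^*)(\gamma^{-1})=(\Phi f(\gamma))^*$, which is $(\Phi f)^*(\gamma^{-1})$ for the involution on $C_c(\Gamma,\widetilde E)$.

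I expect the main obstacle to be bookkeeping rather than anything analytic. One must check that the product $\xi\eta$, which is a priori defined only on $D_{\delta^{-1}}\cap D_{(\gamma\delta)^{-1}}$, really defines an element of $C_0(G_{\gamma\delta},E)$ after extension by zero. One must also check that the reindexing between composable pairs in $G$ and factorizations $\zeta=\gamma\delta$ in $\Gamma$ is a bijection onto the nonzero terms. Continuity and vanishing at infinity of the extension follow from $\xi,\eta$ vanishing at infinity and $D_{\delta^{-1}}\cap D_{\zeta^{-1}}$ being open. The bijection is precisely the identification of $G^{(2)}$ above, using the partial action axiom $\theta_\gamma\circ\theta_\delta\subseteq\theta_{\gamma\delta}$. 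Multiplicativity and $*$-preservation together with bijectivity then give the $*$-isomorphism.
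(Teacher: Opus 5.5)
Your verification that $\Phi$ is multiplicative and $*$-preserving is correct and follows the same computation as the paper. Your parametrization of composable pairs by $\delta$, the use of $\theta_\delta(D_{\delta^{-1}}\cap D_{\zeta^{-1}})=D_\delta\cap D_{\gamma^{-1}}$, and the explicit extension of $\xi\eta$ by zero to $G_{\gamma\delta}$ supply details the paper leaves implicit.

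The gap is the one step you do not argue. You take bijectivity of $\Phi$ from the discussion preceding the statement, but surjectivity is false in general. The paper has the same defect: its proof also checks only the algebraic operations and relies on that assertion. Since $\widetilde E_\gamma=C_0(G_\gamma,E)$, an element $\xi\in C_c(\Gamma,\widetilde E)$ is a finitely supported family of sections that merely vanish at infinity. ``Assembling'' them gives an element of $C_0(G,E)$, and this lies in $C_c(G,E)$ only if each $\xi(\gamma)$ is compactly supported. For example, take the trivial group acting on $X=\mathbb{R}$ with $E$ the trivial line bundle: then $\Phi$ is the inclusion $C_c(\mathbb{R})\hookrightarrow C_0(\mathbb{R})$, which is not onto.

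What your argument actually proves is weaker. $\Phi$ is an injective $*$-homomorphism (injective because $G=\bigsqcup_{\gamma}G_\gamma$). Its image is the $*$-subalgebra of those $\xi\in C_c(\Gamma,\widetilde E)$ with $\xi(\gamma)\in C_c(G_\gamma,E)$ for every $\gamma$. This subalgebra is dense in the $\ell^1$-norm, hence in $C_r^*(\Gamma,\widetilde E)$. It equals all of $C_c(\Gamma,\widetilde E)$ when every $D_\gamma$ is compact, e.g.\ clopen domains in a compact space. The statement should be weakened to this form.

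The passage to reduced algebras survives by density. However, the implication (i)$\Rightarrow$(ii) in Theorem~\ref{rdp-partial-action-reduction} then needs one extra approximation step. For sections supported in a fixed finite set $F\subseteq\Gamma$, both $\|\cdot\|_r$ and $\|\cdot\|_{2,p,\ell}$ are bounded by a constant depending on $F$ and $p$ times $\max_{\gamma\in F}\|\xi(\gamma)\|_\infty$. So approximating each $\xi(\gamma)$ uniformly by compactly supported sections transfers the Rapid Decay inequality from the image of $\Phi$ to all of $C_c(\Gamma,\widetilde E)$.
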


\begin{proof}
Let $f,h\in C_c(G,E)$.
For $(\gamma\delta,x)\in G$, one has
\begin{align*}
(\Phi(f*h))(\gamma\delta)(\gamma\delta,x)
&=(f*h)(\gamma\delta,x)\\
&=\sum_{(\eta,y)(\zeta,z)=(\gamma\delta,x)} f(\eta,y)h(\zeta,z).
\end{align*}
Because $G=\Gamma\ltimes_\theta X$, the above decomposition is uniquely
determined by the group components, and one obtains
\[
(\Phi(f*h))(\gamma\delta,x)
=
\sum_{\alpha\beta=\gamma\delta}
(\Phi f)(\alpha)(\alpha,\theta_\beta(x))\,(\Phi h)(\beta)(\beta,x),
\]
which is exactly the convolution formula in $C_c(\Gamma,\widetilde E)$.
Hence
\[
\Phi(f*h)=\Phi(f)*\Phi(h).
\]

Similarly, for $(\gamma^{-1},y)\in G_{\gamma^{-1}}$,
\[
(\Phi(f^*))(\gamma^{-1})(\gamma^{-1},y)
=
f(\gamma,\theta_{\gamma^{-1}}(y))^*
=
(\Phi f)(\gamma)^*(\gamma^{-1},y).
\]
Thus
\[
\Phi(f^*)=\Phi(f)^*.
\]
Therefore $\Phi$ is a $*$-algebra isomorphism.
\end{proof}

\subsection{Passage to reduced cross-sectional algebras}

The $*$-algebra isomorphism above is compatible with the regular
representations, and therefore with the reduced norms.

\begin{proposition}\label{phi-reduced-iso}
The map $\Phi$ extends to a canonical $*$-isomorphism
\[
\Phi\colon C_r^*(G,E)\xrightarrow{\;\cong\;} C_r^*(\Gamma,\widetilde E).
\]
\end{proposition}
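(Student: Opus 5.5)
The plan is to show that $\|\Phi f\|_{r}=\|f\|_{r}$ for every $f\in C_c(G,E)$. Combined with Proposition~\ref{phi-algebraic-iso}, this lets $\Phi$ extend to an isometric $*$-homomorphism between the completions. Its range is closed and contains the dense subalgebra $C_c(\Gamma,\widetilde E)$, so the extension is surjective. The whole argument therefore reduces to comparing the two regular representations, and I would realise both of them on a single Hilbert module.

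\textbf{Step 1: the Hilbert module over $\Gamma$.} The reduced norm on $C_c(\Gamma,\widetilde E)$ comes from the right Hilbert $\widetilde E_e$-module $\ell^2(\Gamma,\widetilde E)$, with inner product $\langle\xi,\eta\rangle=\sum_{\beta}\xi(\beta)^*\eta(\beta)\in \widetilde E_e$, on which $C_c(\Gamma,\widetilde E)$ acts by left convolution. Here $\widetilde E_e=C_0(G_e,E)\cong C_0(G^{(0)},E^{(0)})$, since $G_e=\{e\}\times X$ is the unit space. I would identify $\ell^2(\Gamma,\widetilde E)$ with $\ell^2_s(E)$ from Section~\ref{sec:continuity-locality}. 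On $C_c$ the map $\Phi$ gives this identification, and for $x\in X$ one computes
\[
\langle\Phi\xi,\Phi\eta\rangle(e,x)=\sum_{\beta:\,x\in D_{\beta^{-1}}}\xi(\beta,x)^*\eta(\beta,x)=\langle\xi,\eta\rangle_s(x).
\]
This holds because $G_x=\{(\beta,x):x\in D_{\beta^{-1}}\}$ is in bijection with the set of such $\beta$. Hence $\Phi$ extends to a unitary of Hilbert modules $\ell^2_s(E)\to\ell^2(\Gamma,\widetilde E)$ over the identification $\widetilde E_e\cong C_0(E^{(0)})$.

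\textbf{Step 2: intertwining and conclusion.} By Proposition~\ref{phi-algebraic-iso}, $\Phi(f*\xi)=\Phi f*\Phi\xi$ for $f,\xi\in C_c(G,E)$. So the left regular representation of $f$ on $\ell^2_s(E)$ is unitarily conjugate to that of $\Phi f$ on $\ell^2(\Gamma,\widetilde E)$, and the two operator norms coincide. What remains is to check that the left action on $\ell^2_s(E)$ computes $\|\cdot\|_r$ for $G$. The reduced norm is usually defined as the supremum over $x\in X$ of the norms of the fibrewise representations on $\ell^2(G_x,E)$, which are the localisations of $\ell^2_s(E)$ at the points of the unit bundle. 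The supremum over localisations of an adjointable operator on a $C_0(X)$-module (equivalently, on a module over the $C_0(X)$-algebra $C_0(E^{(0)})$) equals its operator norm. This gives $\|f\|_r=\|\Phi f\|_r$.

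\textbf{Main obstacle.} I expect the delicate point to be Step 2's identification of the reduced norms. It depends on using the same model of the regular representation on both sides: the module $\ell^2_s(E)$, rather than a family of Hilbert spaces obtained from states. It also requires the non-saturated case to be handled correctly, where some fibres $E_x$ may be zero; these are harmless, since they contribute nothing to either side. If conventions differ, for example if $C_r^*(\Gamma,\widetilde E)$ is defined via induced representations from $\widetilde E_e$, I would check directly that the convolution formula matches the left action on $\ell^2(\Gamma)\otimes$-style modules. Everything else is bookkeeping via Proposition~\ref{phi-algebraic-iso}.
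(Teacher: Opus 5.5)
Your proposal is correct and takes the same route as the paper, namely transporting the regular representation through $\Phi$. It makes the paper's brief fibrewise sketch precise by exhibiting the unitary $\ell^2_s(E)\cong\ell^2(\Gamma,\widetilde E)$ of Hilbert $C_0(E^{(0)})=\widetilde E_e$-modules and then using that the reduced norm is the supremum of the norms of the localisations. One harmless caveat: $\Phi(C_c(G,E))$ is the algebraic direct sum of the spaces $C_c(G_\gamma,E)$, which in general is only dense in $C_c(\Gamma,\widetilde E)$, because the fibres $\widetilde E_\gamma=C_0(G_\gamma,E)$ need not consist of compactly supported sections. So when extending the unitary and in the surjectivity step you should invoke density of this image rather than equality, and density is all your argument needs.
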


\begin{proof}
The regular representation of $C_c(G,E)$ is obtained from the family of
representations on the Hilbert modules associated to the source fibers of $G$.
On the other hand, the regular representation of $C_c(\Gamma,\widetilde E)$ is
defined from the regular Hilbert module of the Fell bundle $\widetilde E$ over
the group $\Gamma$.

Under the identification implemented by $\Phi$, these regular representations
correspond fiberwise: the contribution of the source fiber at $x\in X$ is
identified with the evaluation at $x$ of the $\Gamma$-indexed regular
representation of $\widetilde E$.
Consequently, $\Phi$ preserves the reduced norm on compactly supported sections,
and therefore extends by continuity to the required $*$-isomorphism.
\end{proof}

\begin{remark}
In a more general framework, one may derive Proposition
\ref{phi-reduced-iso} from the theory of Fell bundles associated to cocycles or
groupoid fibrations. Here we prefer to keep the argument at the level of this
special case, since this is the only case needed for the present paper.
\end{remark}

\subsection{Equivalence of the Sobolev norms}

Let $\ell\colon \Gamma\to [0,\infty)$ be a length function on the group
$\Gamma$.
We define a length function $L$ on $G=\Gamma\ltimes_\theta X$ by
\[
L(\gamma,x)=\ell(\gamma),
\qquad (\gamma,x)\in G.
\]
It is immediate that $L$ is continuous and satisfies the axioms of a length
function on the groupoid $G$.

\begin{lemma}\label{sobolev-norms-partial-action}
For every $f\in C_c(G,E)$ and $\widetilde f=\Phi(f)$, one has
\[
\|f\|_{2,p,s,L}=\|\widetilde f\|_{2,p,s,\ell},
\qquad
\|f\|_{2,p,r,L}=\|\widetilde f\|_{2,p,r,\ell},
\]
and hence
\[
\|f\|_{2,p,L}=\|\widetilde f\|_{2,p,\ell}.
\]
\end{lemma}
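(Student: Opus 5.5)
The plan is to unwind both sides down to the fiber algebras $E_x$ and observe that the two sums are literally the same sum. Recall that for the group bundle $\widetilde E$ the unit fiber is $\widetilde E_e=C_0(G_e,E)$. Since $G_e=\{(e,x):x\in X\}$ is identified with $X=G^{(0)}$, $\widetilde E_e$ is the $C^*$-algebra $C_0(X,E^{(0)})$ of sections of the unit bundle. Its norm is $\|a\|=\sup_{x\in X}\|a(x)\|_{E_x}$.

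For the group $\Gamma$ the Sobolev norm is
\[
\|\widetilde f\|_{2,p,s,\ell}=\Big\|\sum_{\gamma\in\Gamma}\widetilde f(\gamma)^*\widetilde f(\gamma)(1+\ell(\gamma))^{2p}\Big\|^{1/2}_{\widetilde E_e},
\]
and the supremum over units is trivial because $\Gamma$ has a single unit. The first step is to compute the product $\widetilde f(\gamma)^*\widetilde f(\gamma)\in\widetilde E_e$ pointwise using the formulas defining multiplication and involution in $\widetilde E$. At $(e,x)$ it equals
\[
\widetilde f(\gamma)^*(\gamma^{-1},\theta_\gamma(x))\,\widetilde f(\gamma)(\gamma,x)=f(\gamma,x)^*f(\gamma,x)
\]
when $x\in D_{\gamma^{-1}}$. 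When $x\notin D_{\gamma^{-1}}$ it is $0$, because the product section is supported on $D_{\gamma^{-1}}$; this is consistent with extending by zero.

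The second step is the bijection $G_x=\{(\gamma,x):x\in D_{\gamma^{-1}}\}\leftrightarrow\{\gamma: x\in D_{\gamma^{-1}}\}$ together with $L(\gamma,x)=\ell(\gamma)$. Combining the two steps, for every $x\in X$,
\[
\Big(\sum_{\gamma\in\Gamma}\widetilde f(\gamma)^*\widetilde f(\gamma)(1+\ell(\gamma))^{2p}\Big)(x)=\sum_{g\in G_x}f(g)^*f(g)(1+L(g))^{2p}.
\]
The sum over $\Gamma$ is finite, since only finitely many $\gamma$ have $\widetilde f(\gamma)\neq 0$. Taking the sup norm in $C_0(X,E^{(0)})$ gives exactly $\sup_x\|\cdot\|$, so squaring and taking roots yields $\|f\|_{2,p,s,L}=\|\widetilde f\|_{2,p,s,\ell}$.

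For the range norms, I would use $\|f\|_{2,p,r,L}=\|f^*\|_{2,p,s,L}$. I would also check that the same identity holds for $\widetilde E$, which follows from $\ell(\gamma^{-1})=\ell(\gamma)$. Then $\Phi(f^*)=\Phi(f)^*$ (Proposition~\ref{phi-algebraic-iso}) reduces the range statement to the source statement. Alternatively, one can compute $\widetilde f(\gamma)\widetilde f(\gamma)^*$ at $(e,y)$ directly: it gives $f(\gamma,\theta_{\gamma^{-1}}(y))f(\gamma,\theta_{\gamma^{-1}}(y))^*$ for $y\in D_\gamma$, and these terms are indexed by $G^y$. Taking the maximum of the two identities gives the final equality.

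The main obstacle is only bookkeeping. One has to verify the multiplication formula in $\widetilde E$ at the level of domains, namely that $D_{\gamma^{-1}}\cap D_{e}$ behaves correctly and that the product lands in $C_0(G_e,E)$. One also has to be sure the norm on $\widetilde E_e$ is the sup norm over $X$, with $\widetilde E_e\cong C_0(X,E^{(0)})$ as a $C^*$-algebra. Once these identifications are in place, the equality is termwise.
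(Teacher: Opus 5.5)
Your proposal is correct and follows essentially the same route as the paper. Both evaluate $\sum_{\gamma}\widetilde f(\gamma)^*\widetilde f(\gamma)(1+\ell(\gamma))^{2p}\in\widetilde E_e\cong C_0(X,E^{(0)})$ pointwise at each $x$, identify it with the fiberwise sum over $G_x$ via $L=\ell\circ c$, take the sup norm, and obtain the range identity by passing to $f^*$ with $\Phi(f^*)=\Phi(f)^*$. Your explicit derivation of $(\widetilde f(\gamma)^*\widetilde f(\gamma))(e,x)=f(\gamma,x)^*f(\gamma,x)$ from the multiplication and involution formulas of $\widetilde E$ is a welcome bit of care that the paper leaves implicit.
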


\begin{proof}
We prove the source-norm identity; the range-norm identity follows by applying
the same argument to $f^*$.

Fix $x\in X$.
The source fiber of $G$ at $x$ is
\[
G_x=\{(\gamma,x)\in G : x\in D_{\gamma^{-1}}\}.
\]
Therefore
\begin{align*}
\|f\|_{2,p,s,L}^2
&=
\sup_{x\in X}
\Big\|
\sum_{(\gamma,x)\in G_x}
f(\gamma,x)^*f(\gamma,x)\,(1+\ell(\gamma))^{2p}
\Big\|.
\end{align*}
On the other hand, for each $\gamma\in\Gamma$, the element
$\widetilde f(\gamma)\in \widetilde E_\gamma=C_0(G_\gamma,E)$ satisfies
\[
\widetilde f(\gamma)(\gamma,x)=f(\gamma,x),
\qquad x\in D_{\gamma^{-1}}.
\]
Hence
\begin{align*}
\|\widetilde f\|_{2,p,s,\ell}^2
&=
\Big\|
\sum_{\gamma\in\Gamma}
\widetilde f(\gamma)^*\widetilde f(\gamma)\,(1+\ell(\gamma))^{2p}
\Big\| \\
&=
\sup_{x\in X}
\Big\|
\sum_{\gamma\in\Gamma}
\widetilde f(\gamma)(\gamma,x)^*\widetilde f(\gamma)(\gamma,x)\,
(1+\ell(\gamma))^{2p}
\Big\| \\
&=
\sup_{x\in X}
\Big\|
\sum_{(\gamma,x)\in G_x}
f(\gamma,x)^*f(\gamma,x)\,(1+\ell(\gamma))^{2p}
\Big\|.
\end{align*}
This proves that
\[
\|f\|_{2,p,s,L}=\|\widetilde f\|_{2,p,s,\ell}.
\]
Applying the same computation to $f^*$ yields
\[
\|f\|_{2,p,r,L}=\|\widetilde f\|_{2,p,r,\ell}.
\]
Therefore
\[
\|f\|_{2,p,L}=\|\widetilde f\|_{2,p,\ell}.
\]
\end{proof}

\subsection{Equivalence of Rapid Decay}

We can now compare the Rapid Decay Property on both sides.

\begin{theorem}\label{rdp-partial-action-reduction}
Let $\Gamma$ be a discrete group, let
$G=\Gamma\ltimes_\theta X$ be the transformation groupoid associated to a
partial action of $\Gamma$ on a locally compact Hausdorff space $X$, and let
$E$ be a Fell bundle over $G$.
Let $\widetilde E$ be the associated Fell bundle over $\Gamma$ constructed
above, and let $\ell$ be a length function on $\Gamma$.
If $L=\ell\circ c$ is the induced length function on $G$, then the following are
equivalent:
\begin{enumerate}[label=\textup{(\roman*)}]
\item $E$ has the Rapid Decay Property with respect to $L$;
\item $\widetilde E$ has the Rapid Decay Property with respect to $\ell$.
\end{enumerate}
\end{theorem}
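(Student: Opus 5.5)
The plan is to transport everything through the map $\Phi$. By Proposition~\ref{phi-algebraic-iso}, $\Phi\colon C_c(G,E)\to C_c(\Gamma,\widetilde E)$ is a linear bijection and a $*$-isomorphism. By Proposition~\ref{phi-reduced-iso}, it is isometric for the reduced norms, so $\|f\|_r=\|\Phi f\|_r$ for all $f\in C_c(G,E)$. By Lemma~\ref{sobolev-norms-partial-action}, $\|f\|_{2,p,L}=\|\Phi f\|_{2,p,\ell}$ for every $p$. Consequently an inequality $\|f\|_r\le C\|f\|_{2,p,L}$ valid for all $f\in C_c(G,E)$ translates verbatim into $\|\widetilde f\|_r\le C\|\widetilde f\|_{2,p,\ell}$ for all $\widetilde f$ in the image of $\Phi$, and conversely.

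For (i)$\Rightarrow$(ii), the only subtlety is that the RDP for $\widetilde E$ must hold on all of $C_c(\Gamma,\widetilde E)$. An element there assigns to finitely many $\gamma$ a section in $C_0(G_\gamma,E)$, which need not be compactly supported. So $\Phi$ is onto the subalgebra $C_c(\Gamma,\widetilde E)_0$ of finitely supported functions whose values lie in $C_c(G_\gamma,E)$, not onto all of $C_c(\Gamma,\widetilde E)$. I would handle this with a density argument. Given $\widetilde f=\sum_{\gamma\in F}\widetilde f(\gamma)\delta_\gamma$ with $F$ finite, approximate each $\widetilde f(\gamma)$ in the sup norm of $C_0(G_\gamma,E)$ by compactly supported sections $\xi_n(\gamma)$. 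Both sides of the RD inequality are continuous with respect to this convergence, because $F$ is finite.
\begin{itemize}
\item The reduced norm is dominated by $\sum_{\gamma\in F}\|\widetilde f(\gamma)\|$, so it is continuous.
\item For the Sobolev norm, $\|\widetilde f\|_{2,p,s,\ell}^2\le \sum_{\gamma\in F}(1+\ell(\gamma))^{2p}\|\widetilde f(\gamma)\|^2$, and the same bound holds on the range side.
\end{itemize}
Passing to the limit then gives the inequality on all of $C_c(\Gamma,\widetilde E)$ with the same constants $C,p$.

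For (ii)$\Rightarrow$(i), the argument is immediate: restrict the inequality for $\widetilde E$ to the image of $\Phi$ and pull it back using the two norm identities. The same constants $C$ and $p$ work in both directions.

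The main obstacle is not the argument itself but its inputs. One is the reduced isometry of Proposition~\ref{phi-reduced-iso}, which I take as given. The other is the surjectivity and density issue above, which is why I would include the approximation step explicitly rather than claiming $\Phi$ is onto.
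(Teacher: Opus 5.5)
Your proposal is correct and follows the paper's proof: both transport the Rapid Decay inequality through $\Phi$, using the reduced-norm isometry of Proposition~\ref{phi-reduced-iso} and the Sobolev-norm identity of Lemma~\ref{sobolev-norms-partial-action}, with the same constants $C,p$ in both directions. Your density step in (i)$\Rightarrow$(ii) is a genuine improvement, because the paper tacitly treats $\Phi$ as a bijection onto $C_c(\Gamma,\widetilde E)$; this fails whenever some $\widetilde E_\gamma=C_0(G_\gamma,E)$ contains sections without compact support, and your sup-norm approximation, with both the reduced and the weighted norms dominated by finite sums of fiber norms, closes exactly that gap.
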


\begin{proof}
Let $f\in C_c(G,E)$ and write $\widetilde f=\Phi(f)$.
By Proposition \ref{phi-reduced-iso},
\[
\|f\|_{C_r^*(G,E)}=\|\widetilde f\|_{C_r^*(\Gamma,\widetilde E)}.
\]
By Lemma \ref{sobolev-norms-partial-action},
\[
\|f\|_{2,p,L}=\|\widetilde f\|_{2,p,\ell}.
\]
Therefore the inequality
\[
\|f\|_{C_r^*(G,E)}\le C\,\|f\|_{2,p,L}
\]
holds for all $f\in C_c(G,E)$ if and only if
\[
\|\widetilde f\|_{C_r^*(\Gamma,\widetilde E)}
\le C\,\|\widetilde f\|_{2,p,\ell}
\]
holds for all $\widetilde f\in C_c(\Gamma,\widetilde E)$.
This is exactly the equivalence between \textup{(i)} and \textup{(ii)}.
\end{proof}

\subsection{The trivial bundle}

An important special case is the trivial Fell bundle over the transformation
groupoid $G=\Gamma\ltimes_\theta X$.
In that case,
\[
C_r^*(G)\cong C_0(X)\rtimes_{\theta,r}\Gamma,
\]
the reduced crossed product associated to the partial action $\theta$, and the
associated Fell bundle $\widetilde E$ over $\Gamma$ is precisely the Fell bundle
implementing this partial crossed product.
Hence Theorem \ref{rdp-partial-action-reduction} shows that
\[
\Gamma\ltimes_\theta X \text{ has RDP with respect to } \ell\circ c
\]
if and only if the corresponding Fell bundle over $\Gamma$ has RDP with respect
to $\ell$.

This observation is useful in two directions. On the one hand, positive results
for transformation groupoids provide examples of Fell bundles over groups with
Rapid Decay. On the other hand, negative results for transformation groupoids
also produce obstructions, even in situations coming from natural partial or
global actions.

\subsection{The Deaconu-Renault groupoid as a partial crossed product}

As an application of Theorem~\ref{rdp-partial-action-reduction}, we return to the Deaconu-Renault groupoids studied in Section~\ref{sec:Fell-bundles-poly-growth}. Recall that in Proposition~\ref{prop:ObstructionRDP} we showed that persistent branching prevents $G$ from having the RDP. We can now understand this failure structurally from the perspective of partial actions and Fell bundles over groups, following a fundamental realization due to Steinberg \cite{Steinberg2026}.

Let $T: X \to X$ be a $d$-to-1 local homeomorphism on a totally disconnected compact Hausdorff space. We can partition $X = \bigsqcup_{i=1}^d X_i$ such that each restriction $T_i = T|_{X_i}$ is a homeomorphism onto its clopen image. The local inverses $\theta_i = T_i^{-1}$ generate a semi-saturated partial action of the free group $F_d = \langle a_1, \dots, a_d \rangle$ on $X$, where each generator $a_i$ acts via $\theta_i$.

A crucial feature of this partial action is its domain constraint: for $i \neq j$, the composition $\theta_j^{-1} \circ \theta_i = T \circ \theta_i$ is only defined on $X_i$, but $T$ acts as the inverse of $\theta_j$ strictly on $X_j$. Hence, any word containing a subword $a_j^{-1} a_i$ ($i \neq j$) has an empty domain. Consequently, any element $(w, x) \in F_d \ltimes_\theta X$ with a non-empty domain must be of the form $w = u v^{-1}$ for positive words $u, v \in F_d^+$ of lengths $m$ and $n$, respectively. 

Applying $v^{-1}$ corresponds to $n$ forward steps via $T$, and $u$ corresponds to $m$ backward steps via local inverses. If $y = w \cdot x = u \cdot T^n(x)$, we immediately get $T^m(y) = T^n(x)$. This allowed Steinberg to show that the map $\Psi(w, x) = (w \cdot x, |u| - |v|, x)$ yields an isomorphism $F_d \ltimes_\theta X \cong G$. Under this identification, the canonical cocycle $c(y, m-n, x) = m-n$ is entirely determined by the acting word: $c(\Psi(w,x)) = \phi(w)$, where $\phi: F_d \to \mathbb{Z}$ is the unique group homomorphism mapping each generator $a_i \mapsto -1$.

\begin{remark}[RDP and Fell bundles over groups]
    This model provides a transparent explanation for the failure of RDP. It is well known that the free group $F_d$ satisfies the RDP with respect to its word length \cite{Haagerup1978}. By Theorem~\ref{rdp-partial-action-reduction}, the RDP for the groupoid $G \cong F_d \ltimes_\theta X$ is equivalent to the RDP of the associated Fell bundle $\widetilde{E} = \{C_0(D_g)\}_{g \in F_d}$ over $F_d$.
    
    However, the RDP of a group $\Gamma$ does not necessarily pass to every Fell bundle over $\Gamma$. In our case, although the base group $F_d$ has RDP, the exponential branching of the domains $D_g$ causes the $I$-norm of compactly supported functions on $G$ to grow exponentially faster than their weighted $L^2$-norms. This geometric obstruction breaks the analytical inequality required for Rapid Decay, which is formalized below by analyzing the kernel of $\phi$.
\end{remark}

\begin{proposition}[Failure of RDP via the principal kernel]
    Let $G \cong F_d \ltimes_\theta X$ be the Deaconu-Renault groupoid of a $d$-to-1 covering map ($d \geq 2$). Let $R = \ker(c)$ be the kernel of the canonical cocycle. Then:
    \begin{enumerate}
        \item $R \cong \ker(\phi) \ltimes_\theta X$ is a clopen principal subgroupoid of $G$.
        \item $R$ has exponential growth and does not satisfy the RDP.
        \item $G$ does not satisfy the RDP.
    \end{enumerate}
\end{proposition}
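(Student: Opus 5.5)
The plan is to prove the three items in order, transporting everything through Steinberg's isomorphism $\Psi\colon F_d\ltimes_\theta X\to G$. The key identity is $c\circ\Psi(w,x)=\phi(w)$, which lets us move freely between the groupoid cocycle and the group homomorphism.

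\textbf{Item (1).} First compute
\[
\Psi^{-1}(R)=\{(w,x): \phi(w)=0\}=\{(w,x)\in F_d\ltimes_\theta X: w\in\ker\phi\}.
\]
Since $\ker\phi$ is a subgroup of $F_d$, restricting the partial action $\theta$ to $\ker\phi$ gives a partial action, with the same domains $D_w$ for $w\in\ker\phi$. The set above is then exactly the transformation groupoid $\ker(\phi)\ltimes_\theta X$, and its groupoid operations are those of $F_d\ltimes_\theta X$. It is clopen because it is the preimage of $\{0\}$ under the continuous map $c$ into the discrete group $\mathbb Z$. Principality was already observed in the proof of Proposition~\ref{prop:ObstructionRDP}: an element $(x,0,y)$ is determined by its range and source, so $R$ is an equivalence relation. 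This transfers through $\Psi$.

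\textbf{Item (2).} Equip $R$ with $L|_R$, where $L$ is the length on $G$. Exponential growth follows verbatim from the counting argument of Proposition~\ref{prop:ObstructionRDP}. Put $z=T^N(x)$; since $T^N$ is a covering map of degree $d^N$, $z$ has $d^N$ preimages $y$. Each triple $(x,0,y)$ lies in $R_x$ and satisfies $L\le 2N$, so $|B_{R_x}(2N)|\ge d^N$.

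In the $F_d$-picture these elements are $\Psi(uv^{-1},x)$ with $|u|=|v|=N$, and the condition $\phi(uv^{-1})=0$ holds automatically. This also shows the growth is exponential for the length $\ell\circ c$, where $\ell$ is the word length on $F_d$, because $\ell(uv^{-1})\le 2N$.

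Finally, Weygandt's theorem \cite{Weygandt2024} says that for principal Hausdorff \'etale groupoids, RDP is equivalent to polynomial growth. Hence $R$ fails the RDP.

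\textbf{Item (3).} Argue by contradiction. If $G$ had the RDP, then Proposition~\ref{prop:RD-open-subgroupoid}, applied to the open subgroupoid $R$ with the trivial bundle, would give RDP for $R$ with respect to $L|_R$. This contradicts item (2).

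The same argument applies to the length $\ell\circ c$ coming from the word length. In that case Theorem~\ref{rdp-partial-action-reduction} reformulates the failure as failure of RDP for the Fell bundle $\{C_0(D_g)\}_{g\in F_d}$.

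\textbf{Main obstacle.} The delicate point is item (2): making sure the length function used on $R$ is the one for which Weygandt's equivalence applies. In particular, we need it to be continuous, and we need the growth to be genuinely exponential uniformly in $x$, rather than merely unbounded. I would first check that $L$ and $\ell\circ c\circ\Psi^{-1}$ are comparable on $R$, using the normal form $w=uv^{-1}$. If that comparison fails, I would run the counting argument separately for each length function, since the lower bound $d^N$ holds in either case.
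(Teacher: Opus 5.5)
Your proposal is correct and follows the paper's proof step for step. You identify $R$ with $\ker(\phi)\ltimes_\theta X$ through $c\circ\Psi=\phi$, get clopenness from continuity of $c$ into $\mathbb{Z}$, and get principality because $(x,0,y)$ is determined by its endpoints; you then bound $|B_{R_x}(2N)|\ge d^N$ by counting preimages under $T^N$, invoke Weygandt's equivalence for principal groupoids, and transfer the failure to $G$ via Proposition~\ref{prop:RD-open-subgroupoid}, while your side remarks on the word length and on continuity of $L$ (which is in fact locally constant) are harmless extras the argument does not need.
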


\begin{proof}
    (1) Since $c(w,x) = \phi(w)$ and $\phi$ is a homomorphism, the kernel $R$ is precisely the restriction of the partial action to the normal subgroup $\ker(\phi) \subset F_d$ (which, by the Nielsen-Schreier theorem, is a free group of infinite rank). Because $c$ is continuous and $\mathbb{Z}$ is discrete, $R$ is open and closed. In the Deaconu-Renault relation, $R$ consists of elements with $m=n$, corresponding to the orbit of the tail equivalence relation. This relation is principal.
    
    (2) The growth of $R$ is determined by the number of elements $y$ such that $T^n(y) = T^n(x)$. As discussed in Section~\ref{sec:Fell-bundles-poly-growth}, for a $d$-to-1 covering, there are exactly $d^n$ such points. Thus, $|B_{R_x}(2n)| \geq d^n$, yielding exponential growth.
    
    (3) By Weygandt \cite{Weygandt2024}, for a principal étale groupoid, RDP is equivalent to polynomial growth. Since $R$ is principal and has exponential growth, it fails RDP. Because RDP is hereditary for open subgroupoids, $G$ must also fail RDP.
\end{proof}

\section{Rapid Decay for group actions on C*-algebras}
\label{sec:group-actions}

In this final section we discuss some basic classes of examples and special
cases of Fell bundles with the Rapid Decay Property.
A particularly important source of Fell bundles comes from actions of discrete
groups on $C^*$-algebras.
This case already contains the trivial-action examples, matrix amplifications,
and many natural crossed products.

We also emphasize that the commutative case is closely related to the previous
section on partial actions and transformation groupoids.
Indeed, if $\Gamma$ acts partially on a locally compact Hausdorff space $X$,
then the associated transformation groupoid
\[
G=\Gamma\ltimes_\theta X
\]
has RDP if and only if the associated Fell bundle over $\Gamma$ has RDP, by
Theorem~\ref{rdp-partial-action-reduction}.
Thus the commutative partial-action case is already covered by the reduction
results proved earlier.
The purpose of the present section is rather to record some complementary
examples and structural observations in the language of group actions on
$C^*$-algebras.

\subsection{Fell bundles associated to global actions}

Let $\Gamma$ be a discrete group and let
\[
\alpha\colon \Gamma\to \Aut(A)
\]
be an action on a $C^*$-algebra $A$.
The associated Fell bundle over $\Gamma$ is
\[
E_\alpha=\bigsqcup_{\gamma\in \Gamma} E_\gamma,
\qquad
E_\gamma=A\delta_\gamma,
\]
with operations
\[
(a\delta_\gamma)(b\delta_\eta)=a\,\alpha_\gamma(b)\delta_{\gamma\eta},
\qquad
(a\delta_\gamma)^*=\alpha_{\gamma^{-1}}(a^*)\delta_{\gamma^{-1}}.
\]
Its reduced cross-sectional algebra is canonically isomorphic to the reduced
crossed product:
\[
C_r^*(E_\alpha)\cong A\rtimes_{\alpha,r}\Gamma.
\]

If $f\in C_c(E_\alpha)$, we may write
\[
f=\sum_{\gamma\in \Gamma} f(\gamma)\delta_\gamma,
\qquad f(\gamma)\in A.
\]
If $\ell$ is a length function on $\Gamma$, then the Sobolev norms introduced
earlier take the form
\begin{align*}
\|f\|_{2,p,s,\ell}
&=
\Big\|
\sum_{\gamma\in \Gamma}
\alpha_{\gamma^{-1}}(f(\gamma)^*f(\gamma))(1+\ell(\gamma))^{2p}
\Big\|^{1/2},\\
\|f\|_{2,p,r,\ell}
&=
\Big\|
\sum_{\gamma\in \Gamma}
f(\gamma)f(\gamma)^*(1+\ell(\gamma))^{2p}
\Big\|^{1/2},
\end{align*}
and
\[
\|f\|_{2,p,\ell}
=
\max\{\|f\|_{2,p,s,\ell},\|f\|_{2,p,r,\ell}\}.
\]

Thus the action $\alpha$ has Rapid Decay with respect to $\ell$ precisely when
there exist constants $C>0$ and $q\geq 0$ such that
\[
\|f\|_{A\rtimes_{\alpha,r}\Gamma}
\leq
C\,\|f\|_{2,q,\ell}
\qquad\text{for all }f\in C_c(\Gamma,A).
\]

\subsection{A necessary condition: RD for the action implies RD for the group}

The first observation is that, in the unital case, Rapid Decay for the action
already forces Rapid Decay for the acting group.

\begin{proposition}\label{prop:action-RD-implies-group-RD}
Let $\Gamma$ be a discrete group acting on a unital $C^*$-algebra $A$.
If the associated Fell bundle $E_\alpha$ has the Rapid Decay Property with
respect to a length function $\ell$ on $\Gamma$, then $\Gamma$ has the Rapid
Decay Property with respect to $\ell$.
\end{proposition}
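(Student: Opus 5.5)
The plan is to embed $C_c(\Gamma)$ into $C_c(E_\alpha)$ through the unit of $A$, and then compare both the reduced norms and the Sobolev norms. For $a\in C_c(\Gamma)$, define
\[
\tilde a:=\sum_{\gamma\in\Gamma} a(\gamma)1_A\delta_\gamma\in C_c(E_\alpha).
\]
Since $\alpha_\gamma(1_A)=1_A$, the map $a\mapsto\tilde a$ is a $*$-homomorphism $\bC\Gamma\to C_c(E_\alpha)$: it respects convolution and involution.

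The Sobolev norms match exactly. By the displayed formulas for the action case,
\[
\sum_\gamma \alpha_{\gamma^{-1}}\bigl(|a(\gamma)|^2 1_A\bigr)(1+\ell(\gamma))^{2p}=\Bigl(\sum_\gamma |a(\gamma)|^2(1+\ell(\gamma))^{2p}\Bigr)1_A,
\]
and this element has norm $\sum_\gamma|a(\gamma)|^2(1+\ell(\gamma))^{2p}$ because $1_A$ has norm one (assuming $A\neq 0$). The range-side sum gives the same value. Hence $\|\tilde a\|_{2,p,\ell}=\|a\|_{2,p,\ell}$, where the right-hand side is the scalar Sobolev norm on $\Gamma$.

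The main step is the inequality $\|a\|_{C_r^*(\Gamma)}\le\|\tilde a\|_{A\rtimes_{\alpha,r}\Gamma}$. To prove it, I would take a faithful representation $\pi\colon A\to B(H)$ and form the regular representation $\tilde\pi$ on $\ell^2(\Gamma,H)$, given by
\[
\tilde\pi(b\delta_\gamma)\xi(\eta)=\pi\bigl(\alpha_{\eta^{-1}}(b)\bigr)\xi(\gamma^{-1}\eta).
\]
This representation computes the reduced norm. On $\tilde a$ the element $b$ is $1_A$, so $\tilde\pi(\tilde a)$ acts as
\[
\lambda(a)\otimes\pi(1_A)
\]
on $\ell^2(\Gamma)\otimes H$. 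Since $\pi(1_A)$ is a nonzero projection, it follows that $\|\tilde\pi(\tilde a)\|=\|\lambda(a)\|$, and in fact equality holds. An alternative route would use the conditional expectation together with the inclusion $C_r^*(\Gamma)\hookrightarrow A\rtimes_r\Gamma$, but the direct computation is cleaner.

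Combining the two comparisons with the Rapid Decay Property for $E_\alpha$ gives
\[
\|a\|_{C_r^*(\Gamma)}=\|\tilde a\|_r\le C\|\tilde a\|_{2,q,\ell}=C\|a\|_{2,q,\ell},
\]
which is exactly RDP for $\Gamma$ with respect to $\ell$. The hard part is only the careful identification of the regular representation on $\tilde a$, which must use that $\alpha$ fixes $1_A$. Unitality is precisely what makes the embedding $a\mapsto\tilde a$ available in the first place.
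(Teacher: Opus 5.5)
Your proposal is correct and follows the same route as the paper: you embed $C_c(\Gamma)$ into $C_c(E_\alpha)$ via $\varphi\mapsto \varphi(\gamma)1_A\delta_\gamma$, check that the Sobolev norms coincide exactly, and use that this embedding is isometric for the reduced norms. The only difference is that you verify the isometric inclusion $C_r^*(\Gamma)\hookrightarrow A\rtimes_{\alpha,r}\Gamma$ directly, by computing $\tilde\pi(\tilde a)=\lambda(a)\otimes\pi(1_A)$ in the regular representation and noting that $A\neq 0$ is needed, whereas the paper simply cites this canonical embedding.
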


\begin{proof}
Since $A$ is unital, the reduced crossed product $A\rtimes_{\alpha,r}\Gamma$
contains the canonical unitary copy of $C_r^*(\Gamma)$ generated by the
implementing unitaries $(u_\gamma)_{\gamma\in\Gamma}$.
Equivalently, the map
\[
C_c(\Gamma)\longrightarrow C_c(E_\alpha),
\qquad
\varphi\longmapsto f_\varphi,\qquad
f_\varphi(\gamma)=\varphi(\gamma)1_A\delta_\gamma,
\]
extends isometrically to an embedding
\[
C_r^*(\Gamma)\hookrightarrow A\rtimes_{\alpha,r}\Gamma.
\]
Hence
\[
\|f_\varphi\|_{A\rtimes_{\alpha,r}\Gamma}
=
\|\varphi\|_{C_r^*(\Gamma)}.
\]

On the other hand,
\begin{align*}
\|f_\varphi\|_{2,p,s,\ell}^2
&=
\Big\|
\sum_{\gamma\in \Gamma}
\alpha_{\gamma^{-1}}\bigl(|\varphi(\gamma)|^2 1_A\bigr)
(1+\ell(\gamma))^{2p}
\Big\|\\
&=
\sum_{\gamma\in \Gamma}
|\varphi(\gamma)|^2(1+\ell(\gamma))^{2p},
\end{align*}
and similarly
\[
\|f_\varphi\|_{2,p,r,\ell}^2
=
\sum_{\gamma\in \Gamma}
|\varphi(\gamma)|^2(1+\ell(\gamma))^{2p}.
\]
Thus
\[
\|f_\varphi\|_{2,p,\ell}
=
\Big(
\sum_{\gamma\in \Gamma}
|\varphi(\gamma)|^2(1+\ell(\gamma))^{2p}
\Big)^{1/2}.
\]

If $E_\alpha$ has RD, there exist $C>0$ and $q\geq 0$ such that
\[
\|f_\varphi\|_{A\rtimes_{\alpha,r}\Gamma}
\leq
C\,\|f_\varphi\|_{2,q,\ell}
\qquad\text{for all }\varphi\in C_c(\Gamma).
\]
Using the two identities above, we get
\[
\|\varphi\|_{C_r^*(\Gamma)}
\leq
C
\Big(
\sum_{\gamma\in \Gamma}
|\varphi(\gamma)|^2(1+\ell(\gamma))^{2q}
\Big)^{1/2},
\]
which is exactly the Rapid Decay Property for $\Gamma$.
\end{proof}

\begin{remark}
In particular, if $A$ is unital and nonzero, then RD for the trivial action of
$\Gamma$ on $A$ can only occur if $\Gamma$ itself has RD.
Thus, even in the simplest coefficient case, one cannot obtain new RD examples
starting from groups without RD.
\end{remark}

\begin{remark}\label{remark:RD-global-vs-partial}
Let $\Gamma$ be a discrete group and let $\alpha:\Gamma\to\Aut(A)$ be a
global action on a unital $C^*$-algebra $A$, with associated Fell bundle
$E_\alpha$.
If $E_\alpha$ has the Rapid Decay Property with respect to a length function
$\ell$ on $\Gamma$, then the group $\Gamma$ itself has the Rapid Decay
Property with respect to $\ell$.

Indeed, given $\varphi\in C_c(\Gamma)$ consider the section
\[
f(\gamma)=\varphi(\gamma)\,1_A\delta_\gamma\in C_c(E_\alpha).
\]
Applying the Rapid Decay inequality for $E_\alpha$ to this section reduces
exactly to the classical Rapid Decay inequality for $\varphi$ in
$C_c(\Gamma)$.

The situation is very different for partial actions.
If $\theta$ is the partial action of $\Gamma$ on $A$ defined by
\[
D_e=A,\qquad D_\gamma=0 \quad (\gamma\neq e),
\]
then the associated Fell bundle satisfies
\[
C_c(E_\theta)=A\delta_e,
\]
so the Rapid Decay inequality holds trivially, independently of the group
$\Gamma$.
Thus a Fell bundle arising from a partial action may have the Rapid Decay
Property even when the underlying group does not.
\end{remark}

\subsection{Trivial actions}

We now turn to the trivial action. Let $A$ be a $C^*$-algebra and let $\Gamma$ act trivially on $A$. Then
\[
A\rtimes_r \Gamma \cong A\otimes_{\min} C_r^*(\Gamma),
\]
and the associated Fell bundle is simply
\[
E_{\mathrm{triv}}=A\times \Gamma.
\]

Even in this seemingly simple case, passing from RDP for $\Gamma$ to RDP for the trivial bundle over an arbitrary $C^*$-algebra $A$ is highly non-trivial. The analytical difficulty stems from the fact that the Rapid Decay Property for $\Gamma$ provides a norm bound for the natural inclusion map $\iota\colon H^{2,q}_{L}(\Gamma) \hookrightarrow C_r^*(\Gamma)$. For this norm bound to pass automatically to the minimal tensor product $A \otimes_{\min} C_r^*(\Gamma)$ for an arbitrary $C^*$-algebra $A$, one would typically need the inclusion $\iota$ to satisfy strong complete boundedness properties, which are not guaranteed by the standard formulation of RDP.

Nevertheless, we can establish the implication for several fundamental classes of $C^*$-algebras, including the commutative and finite-dimensional cases, where the spatial structure of the specific algebras allows for direct estimates.

\subsection{The commutative trivial action}

\begin{proposition}\label{prop:commutative-trivial-action-RD}
Let $\Gamma$ be a discrete group with the Rapid Decay Property with respect to
a length function $\ell$, and let $A=C_0(X)$ for a locally compact Hausdorff
space $X$.
Then the trivial action of $\Gamma$ on $A$ has the Rapid Decay Property with
respect to $\ell$.
\end{proposition}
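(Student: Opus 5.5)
The plan is to reduce everything to the scalar Rapid Decay inequality for $\Gamma$ by evaluating at points of $X$. For the trivial action on $A=C_0(X)$, we have the identification
\[
A\rtimes_r\Gamma\cong C_0(X)\otimes_{\min}C_r^*(\Gamma)\cong C_0(X,C_r^*(\Gamma)),
\]
and under it the reduced norm of $f\in C_c(\Gamma,C_0(X))$ is
\[
\|f\|_{A\rtimes_r\Gamma}=\sup_{x\in X}\|f_x\|_{C_r^*(\Gamma)},\qquad f_x(\gamma):=f(\gamma)(x)\in\bC .
\]
Here $f_x\in C_c(\Gamma)$, with support inside the finite set $\supp(f)$. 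The first step is to justify this formula. We can either quote the canonical isomorphism $C_0(X)\otimes_{\min}B\cong C_0(X,B)$, or argue directly: the evaluation maps $\mathrm{ev}_x\colon C_0(X)\rtimes_r\Gamma\to C_r^*(\Gamma)$ are equivariant quotients, and the family $\{\mathrm{ev}_x\}_{x\in X}$ is jointly faithful. Alternatively, one can view $C_0(X)\rtimes_r\Gamma$ as $C_r^*$ of the transformation groupoid $\Gamma\times X$ with trivial action. Its source fibres are copies of $\Gamma$, and the regular representation at $x$ is exactly the left regular representation of $\Gamma$ applied to $f_x$.

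The second step is to compare the Sobolev norms. Because the action is trivial and $A$ is commutative, the formulas in this section give
\[
\|f\|_{2,p,s,\ell}^2=\|f\|_{2,p,r,\ell}^2=\Big\|\sum_{\gamma}|f(\gamma)|^2(1+\ell(\gamma))^{2p}\Big\|_{C_0(X)}=\sup_{x\in X}\sum_\gamma|f_x(\gamma)|^2(1+\ell(\gamma))^{2p}.
\]
Hence $\|f\|_{2,p,\ell}=\sup_x\|f_x\|_{2,p,\ell}$, where the right-hand side uses the scalar Sobolev norm on $\Gamma$.

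The third step combines the two. Let $C,q$ be the Rapid Decay constants for $\Gamma$. Then
\[
\|f\|_{A\rtimes_r\Gamma}=\sup_x\|f_x\|_{C_r^*(\Gamma)}\le C\sup_x\|f_x\|_{2,q,\ell}=C\|f\|_{2,q,\ell},
\]
which is the required inequality with the same constants. The only genuine obstacle is the first step, the pointwise description of the reduced norm. This is the point where commutativity is essential. For noncommutative $A$ one would need a complete-boundedness version of RD, whereas here the sup over characters of $C_0(X)$ computes the minimal tensor norm. I would present it via the groupoid picture $\Gamma\ltimes X$ with trivial action, since this also fits Theorem~\ref{rdp-partial-action-reduction}.
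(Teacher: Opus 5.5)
Your proposal is correct and follows essentially the same route as the paper: identify $C_0(X)\rtimes_r\Gamma$ with $C_0(X,C_r^*(\Gamma))$ so that $\|f\|_r=\sup_{x}\|f_x\|_{C_r^*(\Gamma)}$, apply the scalar RD inequality to each $f_x$, and recognize $\sup_x\sum_\gamma|f(\gamma)(x)|^2(1+\ell(\gamma))^{2q}$ as $\|f\|_{2,q,\ell}^2$. Your direct justification of the pointwise norm formula, via the trivial product groupoid $\Gamma\times X$ whose regular representation at $x$ is $\lambda(f_x)$, is a detail the paper simply takes from the canonical isomorphism.
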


\begin{proof}
Under the canonical isomorphism
\[
C_0(X)\rtimes_r \Gamma
\cong
C_0(X)\otimes C_r^*(\Gamma)
\cong
C_0(X,C_r^*(\Gamma)),
\]
an element $f\in C_c(\Gamma,C_0(X))$ may be viewed as a continuous function
\[
x\longmapsto f_x\in C_c(\Gamma),
\qquad
f_x(\gamma):=f(\gamma)(x).
\]
Moreover,
\[
\|f\|_r
=
\sup_{x\in X}\|f_x\|_{C_r^*(\Gamma)}.
\]

Since $\Gamma$ has RD, there exist constants $C>0$ and $q\geq 0$ such that
\[
\|f_x\|_{C_r^*(\Gamma)}
\leq
C
\Big(
\sum_{\gamma\in \Gamma}
|f(\gamma)(x)|^2(1+\ell(\gamma))^{2q}
\Big)^{1/2}
\qquad\text{for all }x\in X.
\]
Taking the supremum over $x$ gives
\[
\|f\|_r
\leq
C\sup_{x\in X}
\Big(
\sum_{\gamma\in \Gamma}
|f(\gamma)(x)|^2(1+\ell(\gamma))^{2q}
\Big)^{1/2}.
\]
But
\[
\sup_{x\in X}
\sum_{\gamma\in \Gamma}
|f(\gamma)(x)|^2(1+\ell(\gamma))^{2q}
=
\Big\|
\sum_{\gamma\in \Gamma}
f(\gamma)^*f(\gamma)(1+\ell(\gamma))^{2q}
\Big\|,
\]
so the right-hand side is exactly \(\|f\|_{2,q,\ell}^2\) in the trivial-action
Fell bundle.
Hence
\[
\|f\|_r\leq C\|f\|_{2,q,\ell},
\]
and the trivial action has RD.
\end{proof}

\begin{corollary}\label{cor:commutative-unital-iff}
Let \(X\) be a nonempty compact Hausdorff space.
Then the trivial action of \(\Gamma\) on \(C(X)\) has RD with respect to
\(\ell\) if and only if \(\Gamma\) has RD with respect to \(\ell\).
\end{corollary}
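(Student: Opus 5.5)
The statement is an equivalence, and each direction is already essentially available from earlier results in this section; the plan is to combine them.

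For the implication ``$\Gamma$ has RD $\Rightarrow$ the trivial action on $C(X)$ has RD'', I would take $A=C(X)=C_0(X)$ with $X$ compact. Compact Hausdorff spaces are in particular locally compact Hausdorff, so Proposition~\ref{prop:commutative-trivial-action-RD} applies verbatim. It gives, with the same constants $C$ and $q$ as for $\Gamma$, the inequality $\|f\|_r\le C\|f\|_{2,q,\ell}$ for all $f\in C_c(\Gamma,C(X))$.

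For the converse, I would use Proposition~\ref{prop:action-RD-implies-group-RD}. The algebra $C(X)$ is unital, and it is nonzero because $X\neq\varnothing$, so the unit satisfies $\|1_A\|=1$. Proposition~\ref{prop:action-RD-implies-group-RD} then shows that RD for the Fell bundle $E_\alpha$ of the trivial action forces RD for $\Gamma$ with respect to $\ell$.

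The one point deserving care is the nonemptiness hypothesis. It enters in the computation $\bigl\|\sum_\gamma |\varphi(\gamma)|^2(1+\ell(\gamma))^{2p}1_A\bigr\| = \sum_\gamma |\varphi(\gamma)|^2(1+\ell(\gamma))^{2p}$, which uses $\|1_A\|=1$. It also enters in the isometric embedding $C_r^*(\Gamma)\hookrightarrow C(X)\rtimes_r\Gamma\cong C(X,C_r^*(\Gamma))$. This embedding can be checked directly: evaluating at any point $x\in X$ recovers $\varphi$, so $\|f_\varphi\|_r=\sup_x\|\varphi\|_{C_r^*(\Gamma)}=\|\varphi\|_{C_r^*(\Gamma)}$. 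If $X$ were empty, $A=0$ and the left-hand side would vanish, so the converse would fail. No step here is a real obstacle; the proof reduces to citing the two propositions.
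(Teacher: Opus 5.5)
Your proposal is correct and matches the paper's proof: one direction is Proposition~\ref{prop:commutative-trivial-action-RD} applied to $C(X)=C_0(X)$, and the other is Proposition~\ref{prop:action-RD-implies-group-RD}, using that $C(X)$ is unital. Your remark that nonemptiness is exactly what gives $\|1_A\|=1$ is a worthwhile clarification, since the paper uses that hypothesis only implicitly.
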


\begin{proof}
The forward implication follows from
Proposition~\ref{prop:action-RD-implies-group-RD}, since \(C(X)\) is unital.
The reverse implication is Proposition~\ref{prop:commutative-trivial-action-RD}.
\end{proof}

\begin{remark}
The case of commutative coefficients is closely related to transformation
groupoids. For trivial actions, one obtains the product groupoid
\[
\Gamma\times X,
\]
and Proposition~\ref{prop:commutative-trivial-action-RD} is consistent with the
partial-action reduction results proved earlier.
\end{remark}

\subsection{Finite-dimensional trivial coefficients}

We next treat the finite-dimensional case.

\begin{proposition}\label{prop:matrix-trivial-action-RD}
Let $\Gamma$ be a discrete group with the Rapid Decay Property with respect to
a length function $\ell$, and let $A=M_n(\bC)$.
Then the trivial action of $\Gamma$ on $A$ has the Rapid Decay Property with
respect to $\ell$.
\end{proposition}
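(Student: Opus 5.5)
Write $f\in C_c(\Gamma,M_n(\bC))$ entrywise as $f(\gamma)=\sum_{i,j}f_{ij}(\gamma)e_{ij}$ with $f_{ij}\in C_c(\Gamma)$. Under the identification $M_n(\bC)\rtimes_r\Gamma\cong M_n(C_r^*(\Gamma))$, the section $f$ corresponds to the matrix $[\lambda(f_{ij})]_{i,j}$. The plan is to bound the norm of this operator matrix by a dimension-dependent multiple of the entrywise reduced norms. Then we apply RD for $\Gamma$ to each entry, and finally compare the entrywise weighted $\ell^2$-norms with the Fell bundle Sobolev norm $\|f\|_{2,q,\ell}$. Constants may depend on $n$, which is harmless since $n$ is fixed.

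First, the operator estimate. For an $n\times n$ matrix of bounded operators one has
\[
\big\|[T_{ij}]\big\|\le \Big(\sum_{i,j}\|T_{ij}\|^2\Big)^{1/2}\le n\max_{i,j}\|T_{ij}\|.
\]
Indeed, $[T_{ij}]=\sum_{i,j}e_{ii}\,(T_{ij}\otimes e_{ij})\,e_{jj}$ is a sum of $n^2$ terms each of norm $\|T_{ij}\|$, and a crude triangle inequality giving $n^2\max\|T_{ij}\|$ also suffices. Combined with RD for $\Gamma$ (constants $C,q$), this gives
\[
\|f\|_r\le n^2\,C\max_{i,j}\Big(\sum_\gamma|f_{ij}(\gamma)|^2(1+\ell(\gamma))^{2q}\Big)^{1/2}.
\]

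Next, the comparison with the Sobolev norm. For the trivial action,
\[
\|f\|_{2,q,s,\ell}^2=\Big\|\sum_\gamma f(\gamma)^*f(\gamma)(1+\ell(\gamma))^{2q}\Big\|_{M_n}.
\]
Since the operator norm dominates $\max_j$ of the diagonal entries and trace$/n$, we get
\[
\|f\|_{2,q,s,\ell}^2\ge \frac1n\operatorname{tr}\Big(\sum_\gamma f(\gamma)^*f(\gamma)(1+\ell(\gamma))^{2q}\Big)=\frac1n\sum_{i,j}\sum_\gamma|f_{ij}(\gamma)|^2(1+\ell(\gamma))^{2q}.
\]
More simply, the $(j,j)$ diagonal entry equals $\sum_i\sum_\gamma|f_{ij}(\gamma)|^2(1+\ell(\gamma))^{2q}$, and it is bounded by the norm. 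Hence each weighted $\ell^2$ sum of an entry $f_{ij}$ is at most $\|f\|_{2,q,s,\ell}^2\le\|f\|_{2,q,\ell}^2$. Putting the two steps together yields $\|f\|_r\le n^2C\|f\|_{2,q,\ell}$, which is RD for the trivial action.

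The only point needing care is the identification $M_n(\bC)\rtimes_r\Gamma\cong M_n(\bC)\otimes C_r^*(\Gamma)\cong M_n(C_r^*(\Gamma))$. This holds because $M_n$ is nuclear, or directly because the regular representation on $\ell^2(\Gamma)\otimes\bC^n$ implements it, and it sends $f$ to $[\lambda(f_{ij})]$. I expect this bookkeeping step, rather than any analytic estimate, to be the main thing to verify. The argument extends verbatim to finite direct sums of matrix algebras, hence to all finite-dimensional $A$, by taking the maximum over blocks.
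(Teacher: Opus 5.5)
Your proof is correct and follows essentially the same route as the paper: identify $M_n(\bC)\rtimes_r\Gamma$ with $M_n(C_r^*(\Gamma))$, bound the norm of the operator matrix by the entrywise norms $\|f_{ij}\|_{C_r^*(\Gamma)}$, apply RD for $\Gamma$ to each entry, and compare the entrywise weighted $\ell^2$-sums with $\|f\|_{2,q,\ell}$. Where the paper appeals loosely to equivalence of norms on $M_n(\bC)$, you make the final comparison explicit by bounding each entry's weighted sum by a diagonal entry (or the normalized trace) of $\sum_\gamma f(\gamma)^*f(\gamma)(1+\ell(\gamma))^{2q}$, which gives the concrete constant $n^2C$.
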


\begin{proof}
Under the canonical identification
\[
M_n(\bC)\rtimes_r \Gamma
\cong
M_n(\bC)\otimes C_r^*(\Gamma)
\cong
M_n(C_r^*(\Gamma)),
\]
an element \(f\in C_c(\Gamma,M_n(\bC))\) may be written as
\[
f(\gamma)=\bigl(f_{ij}(\gamma)\bigr)_{1\le i,j\le n},
\qquad
f_{ij}\in C_c(\Gamma).
\]
Hence \(f\) corresponds to the matrix \((f_{ij})\in M_n(C_r^*(\Gamma))\).

Since all norms on the finite-dimensional space \(M_n(\bC)\) are equivalent,
there exists a constant \(K_n>0\) such that
\[
\|X\|^2\le K_n\sum_{i,j}|X_{ij}|^2
\qquad\text{for all }X=(X_{ij})\in M_n(\bC).
\]
Using also the corresponding matrix norm estimate on \(M_n(C_r^*(\Gamma))\), we get
\[
\|f\|_r^2
\le
K_n\sum_{i,j}\|f_{ij}\|_{C_r^*(\Gamma)}^2.
\]
Since \(\Gamma\) has RD, there exist \(C>0\) and \(q\ge 0\) such that
\[
\|f_{ij}\|_{C_r^*(\Gamma)}^2
\le
C
\sum_{\gamma\in \Gamma}|f_{ij}(\gamma)|^2(1+\ell(\gamma))^{2q}.
\]
Therefore
\[
\|f\|_r^2
\le
K_nC
\sum_{\gamma\in \Gamma}\sum_{i,j}|f_{ij}(\gamma)|^2(1+\ell(\gamma))^{2q}.
\]
By equivalence of norms on \(M_n(\bC)\), after changing the constant once more,
we obtain
\[
\|f\|_r^2
\le
C'
\sum_{\gamma\in \Gamma}\|f(\gamma)\|^2(1+\ell(\gamma))^{2q}.
\]
Since \(M_n(\bC)\) is finite-dimensional, the scalar quantity on the
right-hand side is equivalent to
\[
\Big\|
\sum_{\gamma\in \Gamma}
f(\gamma)^*f(\gamma)(1+\ell(\gamma))^{2q}
\Big\|.
\]
Thus
\[
\|f\|_r\le C''\|f\|_{2,q,\ell},
\]
and the result follows.
\end{proof}

\begin{corollary}\label{cor:finite-dimensional-trivial-action-RD}
Let \(A\) be a finite-dimensional \(C^*\)-algebra.
If \(\Gamma\) has RD with respect to \(\ell\), then the trivial action of
\(\Gamma\) on \(A\) has RD with respect to \(\ell\).
\end{corollary}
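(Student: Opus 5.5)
The plan is to reduce to the matrix case of Proposition~\ref{prop:matrix-trivial-action-RD} through the structure theorem for finite-dimensional $C^*$-algebras. Write
\[
A\cong \bigoplus_{k=1}^m M_{n_k}(\bC),
\]
so every element of $A$ splits into components $a=(a_1,\dots,a_m)$. The trivial action of $\Gamma$ on $A$ is the direct sum of the trivial actions on the summands $M_{n_k}(\bC)$. The decomposition then passes to the reduced crossed product:
\[
A\rtimes_r\Gamma\cong \bigoplus_{k=1}^m M_{n_k}(\bC)\rtimes_r\Gamma .
\]
This is compatible with the identification $A\rtimes_r\Gamma\cong A\otimes C_r^*(\Gamma)$ recorded before Proposition~\ref{prop:commutative-trivial-action-RD}, since the minimal tensor product distributes over finite direct sums.

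Next take $f\in C_c(\Gamma,A)$ and write it as $f=(f_1,\dots,f_m)$ with $f_k\in C_c(\Gamma,M_{n_k}(\bC))$. The norm in a finite direct sum is the maximum of the component norms, so
\[
\|f\|_r=\max_k\|f_k\|_r .
\]
Proposition~\ref{prop:matrix-trivial-action-RD} supplies, for each $k$, constants $C_k>0$ and $q_k\ge 0$ with
\[
\|f_k\|_r\le C_k\|f_k\|_{2,q_k,\ell}.
\]
Put $q=\max_k q_k$ and $C=\max_k C_k$. Since the weights satisfy $(1+\ell)^{q_k}\le(1+\ell)^q$, each $\|f_k\|_{2,q_k,\ell}$ is bounded by $\|f_k\|_{2,q,\ell}$, and therefore $\|f\|_r\le C\max_k\|f_k\|_{2,q,\ell}$.

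It remains to compare $\max_k\|f_k\|_{2,q,\ell}$ with $\|f\|_{2,q,\ell}$. Because the action is trivial, the sums defining the Sobolev norms are taken inside $A$ and decompose into components:
\[
\sum_\gamma f(\gamma)^*f(\gamma)(1+\ell(\gamma))^{2q}
\]
has $k$-th component $\sum_\gamma f_k(\gamma)^*f_k(\gamma)(1+\ell(\gamma))^{2q}$, and likewise for the range-side sums. The norm of an element of a direct sum is the maximum of the component norms, which gives
\[
\|f\|_{2,q,s,\ell}=\max_k\|f_k\|_{2,q,s,\ell}
\]
and the same identity for the range-side norm. Hence $\|f\|_{2,q,\ell}=\max_k\|f_k\|_{2,q,\ell}$, and the RD inequality $\|f\|_r\le C\|f\|_{2,q,\ell}$ follows.

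I do not expect any genuine obstacle here. The only point needing care is that the reduced crossed product really does split as the direct sum of the component crossed products; this is a standard fact for finite direct sums and invariant ideals. Everything else is bookkeeping with the maximum norm, together with the uniform choice of $q$ and $C$, which is possible because there are only finitely many summands.
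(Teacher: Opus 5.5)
Your proposal is correct and follows the same route as the paper: decompose $A$ as a finite direct sum of matrix algebras, apply Proposition~\ref{prop:matrix-trivial-action-RD} to each summand, and take the maximum of the finitely many constants. Your explicit check that the reduced norm and the Sobolev norms $\|\cdot\|_{2,q,\ell}$ both split as maxima over the summands (using triviality of the action and a common $q$) fills in bookkeeping that the paper leaves implicit.
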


\begin{proof}
Any finite-dimensional \(C^*\)-algebra is a finite direct sum of matrix algebras,
\[
A\cong \bigoplus_{k=1}^m M_{n_k}(\bC).
\]
The trivial crossed product decomposes accordingly as a finite direct sum of the
corresponding matrix amplifications of \(C_r^*(\Gamma)\).
Applying Proposition~\ref{prop:matrix-trivial-action-RD} to each summand and
taking the maximum of the finitely many constants gives the result.
\end{proof}


\begin{example}
Let \(\Gamma=\bF_2\) with its usual word-length function, and let
\(A=M_n(\bC)\) with the trivial action of \(\bF_2\).
Since \(\bF_2\) has the classical Rapid Decay Property, it follows from
Proposition~\ref{prop:matrix-trivial-action-RD} that the trivial Fell bundle
\[
M_n(\bC)\times \bF_2
\]
has the Rapid Decay Property.
Equivalently,
\[
M_n(\bC)\rtimes_r \bF_2
\cong
M_n(C_r^*(\bF_2))
\]
contains a dense Schwartz-type Fr\'echet $*$-subalgebra given by the
corresponding weighted Sobolev norms.
\end{example}

\subsection{A cautionary remark on transformation groupoids}

The previous examples might suggest that RD for \(\Gamma\) should often pass to transformation objects built from actions of \(\Gamma\). However, one should be careful: even for global actions on compact spaces, Rapid Decay for the acting group does \emph{not} in general force Rapid Decay for the transformation groupoid \(\Gamma\ltimes X\). 

Recall from Section~\ref{sec:Fell-bundles-poly-growth} that for principal groupoids (which includes transformation groupoids of free actions), Rapid Decay is strictly equivalent to polynomial growth by Weygandt's theorem \cite{Weygandt2024}. Consequently, if a group $\Gamma$ has RDP but exhibits exponential growth (such as the free group $\mathbb{F}_2$), any free action of $\Gamma$ will yield a groupoid that necessarily fails to have RDP.

This geometric obstruction highlights why the reduction theorem for partial actions proved in Section~\ref{sec:partial-actions-reduction} is so useful: it identifies RDP for the groupoid \(\Gamma\ltimes_\theta X\) with RDP for the associated Fell bundle over \(\Gamma\), and \emph{not} with RDP of the bare group \(\Gamma\) itself. Thus, the coefficient data carried by the specific partial action matters in an essential way.

In particular, for topologically free actions one should expect strong restrictions on the transformation groupoid if it is to have RD. This shows that the examples coming from trivial actions and from coefficient bundles should be viewed as genuinely bundle-theoretic phenomena, and not merely as direct consequences of RD for the acting group.

\section{Localizability}\label{sec:localizability}

The notion of \emph{localizability} for Fell bundles was introduced by
Resende \cite{Resende2017} in the context of quantales associated with
groupoids and their $C^*$-algebras.
For the purposes of the present paper, we use the following concrete analytic
formulation.

Let $A=C_r^*(E)$ and let $U\subseteq G$ be open.
Define
\[
A_U:=\{a\in A:\supp(a)\subseteq U\},
\qquad
p^*(U):=\overline{C_c(E|_U)}^{\|\cdot\|_r}.
\]

\begin{definition}
We say that $A=C_r^*(E)$ is \emph{localizable} if
\[
A_U=p^*(U)
\qquad\text{for every open }U\subseteq G.
\]
Equivalently, if $a\in C_r^*(E)$ satisfies $\supp(a)\subseteq U$, then for every
$\varepsilon>0$ there exists $f\in C_c(E|_U)$ such that
\[
\|a-f\|_r<\varepsilon.
\]
\end{definition}

In other words, localizability means that elements of the reduced
cross-sectional algebra whose support is contained in an open subset $U$
can be approximated in the reduced norm by compactly supported sections
supported in $U$.

\begin{remark}
If $V\subseteq G^{(0)}$ is open and invariant, and $G_V:=s^{-1}(V)=r^{-1}(V)$
denotes the corresponding open subgroupoid, then localizability implies
\[
C_r^*(E|_{G_V}) = C_r^*(E)\cap A_{G_V},
\]
and hence implies inner exactness for the Fell bundle $E\to G$, in the sense
that
\[
0 \longrightarrow C_r^*(E|_{G_V})
\longrightarrow C_r^*(E)
\longrightarrow C_r^*(E|_{G\setminus G_V})
\longrightarrow 0
\]
is exact for every open invariant subset $V\subseteq G^{(0)}$.
\end{remark}

\subsection{Localizability, negative type functions, and the Haagerup property}

The goal of this section is to derive localizability from Rapid Decay under an
additional approximation hypothesis coming from positive definite multipliers.

We begin by recalling the standard notions of negative type and the Haagerup
property for groupoids.

\begin{definition}
A continuous function $\psi\colon G\to \bR$ is said to be of
\emph{negative type} (or \emph{conditionally negative definite}) if:
\begin{enumerate}[label=\textup{(\roman*)}]
\item $\psi|_{G^{(0)}}=0$;
\item $\psi(\gamma)=\psi(\gamma^{-1})$ for all $\gamma\in G$;
\item for every $x\in G^{(0)}$, every finite family
$\gamma_1,\dots,\gamma_n\in G^x$, and every family
$c_1,\dots,c_n\in \bC$ satisfying $\sum_{i=1}^n c_i=0$, one has
\[
\sum_{i,j=1}^n \overline{c_i}c_j\,\psi(\gamma_i^{-1}\gamma_j)\le 0.
\]
\end{enumerate}
\end{definition}

\begin{remark}
Despite the terminology, a function of negative type need not take negative
values. The word ``negative'' refers to the quadratic form in
\textup{(iii)}, not to the pointwise sign of $\psi$.
\end{remark}

\begin{definition}
A continuous function $\psi\colon G\to \bR$ is said to be
\emph{locally proper} if for every compact subset $K\subseteq G^{(0)}$,
the restriction of $\psi$ to
\[
G_K^K:=r^{-1}(K)\cap s^{-1}(K)
\]
is proper.
\end{definition}

\begin{definition}
We say that $G$ has the \emph{Haagerup property} (or is
\emph{a-T-menable}) if it admits a continuous locally proper function of
negative type.
\end{definition}

\begin{remark}
If $\psi$ is a continuous function of negative type on $G$, then by
Schoenberg's theorem the functions
\[
h_t(\gamma):=e^{-t\psi(\gamma)}, \qquad t>0,
\]
are continuous positive definite functions on $G$. If moreover $\psi$ is
locally proper, then $h_t\to 1$ uniformly on compact subsets as $t\downarrow 0$.
Thus the Haagerup property provides a natural source of positive definite
multipliers approximating the identity.
\end{remark}

\begin{remark}
The Rapid Decay Property and the Haagerup property play different roles in our
argument. The Rapid Decay Property is formulated with respect to a length
function $L\colon G\to [0,\infty)$, while the Haagerup property is formulated
in terms of a negative type function $\psi\colon G\to \bR$. In general, these
are different pieces of data and should not be confused. The role of $\psi$ is
to produce positive definite multipliers, whereas the role of $L$ is to control
the Sobolev norms appearing in the Rapid Decay estimate.
\end{remark}

\subsection{Multipliers and approximations via RD}

If $h$ is a bounded continuous positive definite function on a groupoid $G$,
then
\[
M_h(f)(\gamma)=h(\gamma)f(\gamma),
\qquad f\in C_c(E),
\]
extends uniquely to a completely positive map on $C_r^*(E)$, and
\[
\|M_h\|=\|h\|_\infty,
\]
by \cite[Lemma~4.2]{Takeishi2014}; see also
\cite[Proposition~3.6]{KwasniewskiLiSkalski2022}.

\begin{proposition}\label{hap}
Let $E$ be a Fell bundle over a groupoid $G$, and let $(h_i)$ be a uniformly
bounded net of continuous positive definite functions on $G$ converging to $1$
uniformly on compact subsets of $G$. Then
\[
\|M_{h_i}(f)-f\|_r \to 0
\qquad\text{for all }f\in C_r^*(E).
\]
\end{proposition}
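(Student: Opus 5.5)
The plan is a standard $\varepsilon/3$ argument. It combines the uniform norm bound on the multipliers with pointwise convergence on the dense subalgebra $C_c(E)$. Set $K:=\sup_i\|h_i\|_\infty<\infty$. By the result quoted just before the statement (\cite[Lemma~4.2]{Takeishi2014}), each $M_{h_i}$ extends to a completely positive map on $C_r^*(E)$ with $\|M_{h_i}\|=\|h_i\|_\infty\le K$. So the family $(M_{h_i})$ is uniformly bounded.

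The first step is to check convergence on $C_c(E)$. Fix $f\in C_c(E)$ with compact support $S$. Then $M_{h_i}(f)-f=(h_i-1)f$ is again in $C_c(E)$ and is supported in $S$. By the norm inequalities \eqref{eq:norm-inequalities}, $\|(h_i-1)f\|_r\le\|(h_i-1)f\|_I$. To bound the $I$-norm uniformly, cover $S$ by finitely many open bisections $B_1,\dots,B_N$. Each fiber $G_x$ or $G^x$ meets each $B_k$ in at most one point, so it meets $S$ in at most $N$ points. Hence
\[
\|(h_i-1)f\|_I\le N\,\|f\|_\infty\,\sup_{\gamma\in S}|h_i(\gamma)-1|,
\]
and the right-hand side tends to $0$ because $h_i\to1$ uniformly on the compact set $S$.

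The second step extends this to all of $C_r^*(E)$. Given $a\in C_r^*(E)$ and $\varepsilon>0$, choose $f\in C_c(E)$ with $\|a-f\|_r<\varepsilon$. Then
\[
\|M_{h_i}(a)-a\|_r\le\|M_{h_i}(a-f)\|_r+\|M_{h_i}(f)-f\|_r+\|f-a\|_r\le (K+1)\varepsilon+\|M_{h_i}(f)-f\|_r .
\]
The limsup is therefore at most $(K+1)\varepsilon$, which gives the claim.

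The only step needing care is the first one: bounding the reduced norm of a compactly supported section by a sup-norm times a constant that depends only on the support. I would handle it exactly as above, through the $I$-norm and the finite bisection cover. An alternative is to write $f$ as a finite sum of sections supported in bisections and use that such sections have reduced norm equal to their sup norm. Everything else uses only the uniform bound on the multipliers, which the statement provides through the uniform boundedness hypothesis together with the cited norm equality $\|M_h\|=\|h\|_\infty$.
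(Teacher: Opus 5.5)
Your proposal is correct and follows the paper's $\varepsilon/3$ argument: use the uniform bound $\|M_{h_i}\|=\|h_i\|_\infty\le K$, then check convergence on the dense subalgebra $C_c(E)$. The only difference is the step on $C_c(E)$, which you prove directly via the $I$-norm estimate and a finite bisection cover of the support, whereas the paper cites \cite[Lemma~4.5]{KwasniewskiLiSkalski2022}; this makes your version self-contained, and your $(K+1)\varepsilon$ bookkeeping avoids the paper's tacit use of $M\ge 1$.
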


\begin{proof}
Fix $\varepsilon>0$, and let $M:=\sup_i\|h_i\|_\infty<\infty$.
Given $f\in C_r^*(E)$, choose $f_N\in C_c(E)$ such that
\[
\|f-f_N\|_r<\frac{\varepsilon}{3M}.
\]
Then
\[
\|M_{h_i}(f)-f\|_r
\le
\|M_{h_i}(f-f_N)\|_r
+
\|M_{h_i}(f_N)-f_N\|_r
+
\|f_N-f\|_r.
\]
Since $\|M_{h_i}\|=\|h_i\|_\infty\le M$, we get
\[
\|M_{h_i}(f-f_N)\|_r
\le
M\|f-f_N\|_r
<
\frac{\varepsilon}{3}.
\]
Also, by \cite[Lemma~4.5]{KwasniewskiLiSkalski2022},
\[
\|M_{h_i}(f_N)-f_N\|_r\to 0.
\]
Hence, for $i$ sufficiently large,
\[
\|M_{h_i}(f_N)-f_N\|_r<\frac{\varepsilon}{3}.
\]
Therefore, for all sufficiently large $i$,
\[
\|M_{h_i}(f)-f\|_r<\varepsilon.
\]
\end{proof}

\begin{proposition}\label{localschwartz-r}
Assume that $E$ has the Rapid Decay Property with respect to $L$, witnessed by
constants $C>0$ and $q\geq 0$.
If $f\in H^{2,L}(E)$ satisfies $\supp(f)\subseteq U$, where $U\subseteq G$ is open,
then for every $\varepsilon>0$ there exists $g\in C_c(E|_U)$ such that
\[
\|f-g\|_r<\varepsilon.
\]
Equivalently,
\[
f\in \overline{C_c(E|_U)}^{\|\cdot\|_r}.
\]
\end{proposition}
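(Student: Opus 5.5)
The plan is to combine Corollary~\ref{cor:schwartz-locality-II} with the Rapid Decay estimate. Here $f\in H^{2,L}(E)$ is viewed inside $C_r^*(E)$ via the continuous inclusion $\iota$ of Proposition~\ref{prop:RD-gives-continuous-inclusion}. The support of $f$ is computed from its realization as a section in $C_0(G,E)$, and $H^{2,L}(E)$ is defined inside $C_0(G,E)$, so $\supp(f)\subseteq U$ is meaningful there.

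\emph{Step 1: approximation in a single Sobolev norm.} Fix $\varepsilon>0$ and let $C,q$ be the RD constants. By Corollary~\ref{cor:schwartz-locality-II}, applied with $p=q$ (if $q=0$ we use Corollary~\ref{cor:l2-locality-II} directly), there is $g\in C_c(E|_U)$ with
\[
\|f-g\|_{2,q,L}<\varepsilon/C.
\]

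\emph{Step 2: pass to the reduced norm.} The RD inequality $\|h\|_r\le C\|h\|_{2,q,L}$ holds on $C_c(E)$. By the construction of $\iota$, namely the continuous extension from $H^{2,q,L}(E)$, it persists for all $h\in H^{2,q,L}(E)$, with $\|h\|_r$ meaning $\|\iota(h)\|_r$. Apply this to $h=f-g$, which lies in $H^{2,L}(E)\subseteq H^{2,q,L}(E)$ because $g\in C_c(E)$. Since $\iota(g)=g$, this gives
\[
\|\iota(f)-g\|_r\le C\|f-g\|_{2,q,L}<\varepsilon.
\]
Hence $\iota(f)$ lies in $\overline{C_c(E|_U)}^{\|\cdot\|_r}$.

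\emph{Main obstacle.} The point needing care is consistency of identifications. The element of $H^{2,q,L}(E)$ given by the completion must be the same object as the $C_0$-section whose support lies in $U$. Only then does the locality corollary, which is phrased via supports of sections, apply to the element that $\iota$ maps into $C_r^*(E)$. I would handle this by noting that $\|\cdot\|_\infty\le\|\cdot\|_{2,q,L}$, so the completion maps injectively-compatibly into $C_0(G,E)$ by pointwise evaluation. The Cauchy sequences used in Corollary~\ref{cor:schwartz-locality-II} converge both in $\|\cdot\|_{2,q,L}$ and uniformly, so there is no ambiguity. Apart from this bookkeeping, the argument is just the two estimates above.
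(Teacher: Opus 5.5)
Your proposal is correct and is essentially the paper's proof: you approximate $f$ in $\|\cdot\|_{2,q,L}$ by sections in $C_c(E|_U)$ using Corollary~\ref{cor:schwartz-locality-II}, then pass to the reduced norm with the Rapid Decay estimate. Your extra remarks only make explicit points the paper uses implicitly, namely extending the RD inequality to $H^{2,q,L}(E)$ through $\iota$, treating the case $q=0$ separately, and identifying elements of the completions with $C_0$-sections.
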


\begin{proof}
Since $f\in H^{2,L}(E)$, one has
\[
f(1+L)^q\in \ell^2_s(E)\cap \ell^2_r(E),
\]
and clearly
\[
\supp\bigl(f(1+L)^q\bigr)=\supp(f)\subseteq U.
\]
By Corollary~\ref{cor:schwartz-locality-II}, there exists a net
$(g_i)_i\subseteq C_c(E|_U)$ such that
\[
\|g_i-f\|_{2,q,L}\to 0.
\]
Applying Rapid Decay, we get
\[
\|g_i-f\|_r\le C\|g_i-f\|_{2,q,L}\to 0.
\]
Hence, for every $\varepsilon>0$, some $g_i\in C_c(E|_U)$ satisfies
\[
\|f-g_i\|_r<\varepsilon.
\]
\end{proof}

\begin{lemma}\label{lemma:multiplier-into-schwartz}
Let $E$ be a Fell bundle over an \'etale groupoid $G$, and assume that $E$
has the Rapid Decay Property with respect to a length function $L$ on $G$.

Let $h\colon G\to \bC$ be a bounded continuous function such that, for every
$p>0$,
\[
B_p(h):=\sup_{\gamma\in G}|h(\gamma)|(1+L(\gamma))^p<\infty.
\]
Then, for every $f\in C_r^*(E)$, one has
\[
M_h(f)\in H^{2,L}(E).
\]
Moreover, for every $p>0$,
\[
\|M_h(f)\|_{2,p,L}\leq B_p(h)\|f\|_r.
\]
\end{lemma}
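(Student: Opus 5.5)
The plan is to prove the pointwise weighted estimate first for $f\in C_c(E)$ and then pass to general $f\in C_r^*(E)$ by density. Throughout, elements of $C_r^*(E)$ are realized as sections in $C_0(E)$ via the contractive map $j\colon C_r^*(E)\to C_0(E)$, so that $M_h(f)$ is the section $\gamma\mapsto h(\gamma)f(\gamma)$. Here $M_h$ is simply pointwise multiplication; no positive definiteness is needed for this lemma.

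\emph{Step 1 (key estimate on $C_c(E)$).} Fix $p>0$ and $f\in C_c(E)$, and set $g:=M_h(f)=hf\in C_c(E)$. For each $x\in G^{(0)}$ the positive elements $f(\gamma)^*f(\gamma)\in E_x$ satisfy
\[
\sum_{\gamma\in G_x} h(\gamma)\overline{h(\gamma)}\,f(\gamma)^*f(\gamma)(1+L(\gamma))^{2p}
\le B_p(h)^2\sum_{\gamma\in G_x} f(\gamma)^*f(\gamma).
\]
Taking norms and the supremum over $x$ gives $\|g\|_{2,p,s,L}\le B_p(h)\|f\|_{2,s}$. The same argument on range fibres gives $\|g\|_{2,p,r,L}\le B_p(h)\|f\|_{2,r}$. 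Hence $\|g\|_{2,p,L}\le B_p(h)\|f\|_{II}\le B_p(h)\|f\|_r$, using \eqref{eq:norm-inequalities}.

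\emph{Step 2 (extension).} Now take a general $f\in C_r^*(E)$ and choose $f_n\in C_c(E)$ with $f_n\to f$ in $\|\cdot\|_r$. By Step 1 applied to $f_n-f_m$, the sequence $(hf_n)$ is Cauchy in every seminorm $\|\cdot\|_{2,p,L}$, so it converges in each completion $H^{2,p,L}(E)$. Since $\|\cdot\|_\infty\le\|\cdot\|_{II}\le\|\cdot\|_{2,p,L}$, each of these limits is the uniform limit of $hf_n$. Because $j$ is contractive, $f_n\to f$ uniformly, and so $hf_n\to hf=M_h(f)$ uniformly, with $hf\in C_0(G,E)$ as $h$ is bounded. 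Thus all the limits coincide with the section $M_h(f)$, which therefore lies in $\bigcap_p H^{2,p,L}(E)\cap C_0(G,E)=H^{2,L}(E)$. Passing to the limit in Step 1 gives $\|M_h(f)\|_{2,p,L}\le B_p(h)\|f\|_r$.

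\emph{Main obstacle.} The delicate point is identifying the abstract completion elements with honest sections. Elements of $H^{2,p,L}(E)$ must be viewed as sections via the injective map given by the $\ell^2$ norms, using the fiberwise sums as in Proposition~\ref{prop:Tf-continuous}. One must also check that $M_h$ on $C_r^*(E)$ agrees with pointwise multiplication under $j$. I would handle both by working consistently inside $C_0(E)$ and using the chain of norm inequalities above. RDP is not actually needed for the estimate itself; it only ensures that $H^{2,L}(E)$ sits inside $C_r^*(E)$, by Proposition~\ref{prop:RD-gives-continuous-inclusion}, so that $M_h(f)$ lands in the Schwartz algebra.
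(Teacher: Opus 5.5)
Your proof is correct and follows essentially the same route as the paper. Both arguments use the estimate $\|hg\|_{2,p,L}\le B_p(h)\|g\|_{II}\le B_p(h)\|g\|_r$ on $C_c(E)$, then a Cauchy-sequence argument in each $\|\cdot\|_{2,p,L}$, and then identify the limits through uniform convergence in $C_0(E)$. Your version is slightly cleaner at one point: you get $hf_n\to hf$ uniformly directly from $\|f_n-f\|_\infty\le\|f_n-f\|_r$ and the boundedness of $h$, whereas the paper appeals to boundedness of $M_h$ on $C_r^*(E)$, which the paper only justifies for positive definite $h$.
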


\begin{proof}
Fix $p>0$ and $f\in C_r^*(E)$.
Choose a sequence $(f_n)\subseteq C_c(E)$ such that
\[
\|f_n-f\|_r\to 0.
\]
Since $M_h$ is bounded on $C_r^*(E)$, we have
\[
\|M_h(f_n)-M_h(f)\|_r\to 0.
\]

If $m,n$ are arbitrary, then
\[
M_h(f_n-f_m)(\gamma)=h(\gamma)(f_n-f_m)(\gamma),
\]
so
\[
(M_h(f_n-f_m))(1+L)^p=h(1+L)^p(f_n-f_m).
\]
Using the definition of the $II$-norm and the estimate
\[
|h(\gamma)|(1+L(\gamma))^p\leq B_p(h),
\]
we obtain
\[
\|M_h(f_n-f_m)\|_{2,p,L}
\leq B_p(h)\|f_n-f_m\|_{II}.
\]
By \eqref{eq:norm-inequalities},
\[
\|f_n-f_m\|_{II}\leq \|f_n-f_m\|_r,
\]
hence
\[
\|M_h(f_n-f_m)\|_{2,p,L}
\leq B_p(h)\|f_n-f_m\|_r.
\]
Thus $(M_h(f_n))_n$ is Cauchy in \(\|\cdot\|_{2,p,L}\).

Let \(g_p\) denote its limit in \(H^{2,p,L}(E)\).
On the other hand,
\[
\|M_h(f_n)-M_h(f)\|_\infty\le \|M_h(f_n)-M_h(f)\|_r\to 0,
\]
so \(M_h(f_n)\to M_h(f)\) uniformly in \(C_0(E)\). Therefore \(g_p=M_h(f)\)
as an element of \(C_0(E)\). Since this holds for every \(p>0\), we conclude
that
\[
M_h(f)\in \bigcap_{p>0} H^{2,p,L}(E)\cap C_0(E)=H^{2,L}(E).
\]

Finally, passing to the limit in the above estimate gives
\[
\|M_h(f)\|_{2,p,L}\leq B_p(h)\|f\|_r.
\]
\end{proof}

\begin{theorem}\label{thm:localizability-via-rd}
Let $E$ be a Fell bundle over an \'etale groupoid $G$, and let
$U\subseteq G$ be open.
Assume that $E$ has the Rapid Decay Property with respect to a length function
$L$ on $G$.

Assume moreover that there exists a net $(h_i)_i$ consisting of continuous positive definite functions on $G$ such that:
\begin{enumerate}[label=\textup{(\roman*)}]
\item $\|h_i\|_\infty \leq c$ for all $i$, for some $c>0$; 
\item $h_i\to 1$ uniformly on compact subsets of $G$;
\item for every $p>0$ and every $i$,
\[
B_{p,i}:=\sup_{\gamma\in G}|h_i(\gamma)|(1+L(\gamma))^p<\infty.
\]
\end{enumerate}

If $f\in C_r^*(E)$ satisfies
\[
\supp(f)\subseteq U,
\]
then there exists a net $(f_i)\subseteq C_c(E|_U)$ such that
\[
\|f_i-f\|_r\to 0.
\]
Equivalently,
\[
\{a\in C_r^*(E):\supp(a)\subseteq U\}
=
\overline{C_c(E|_U)}^{\|\cdot\|_r}.
\]
\end{theorem}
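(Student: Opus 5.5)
The argument is a two-stage approximation: regularize $f$ with the multipliers $M_{h_i}$, then localize each regularized element using the Rapid Decay machinery. Fix $f\in C_r^*(E)$ with $\supp(f)\subseteq U$ and $\varepsilon>0$. By Proposition~\ref{hap}, applied to the uniformly bounded net $(h_i)$ from hypotheses (i) and (ii), we have $\|M_{h_i}(f)-f\|_r\to 0$. Hence we may choose an index $i$ with $\|M_{h_i}(f)-f\|_r<\varepsilon/2$.

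Next, hypothesis (iii) says precisely that $B_p(h_i)<\infty$ for every $p>0$. Lemma~\ref{lemma:multiplier-into-schwartz} therefore gives $M_{h_i}(f)\in H^{2,L}(E)$.

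The key observation concerns supports. $M_{h_i}(f)$ is realized in $C_0(E)$ as the pointwise product $\gamma\mapsto h_i(\gamma)f(\gamma)$. This holds because $M_{h_i}$ is continuous on $C_r^*(E)$, evaluation at $\gamma$ is continuous since $\|\cdot\|_\infty\le\|\cdot\|_r$, and the formula holds on $C_c(E)$. It follows that $\supp(M_{h_i}(f))\subseteq\supp(f)\subseteq U$.

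We can now apply Proposition~\ref{localschwartz-r} to $M_{h_i}(f)$. It produces $g\in C_c(E|_U)$ with $\|M_{h_i}(f)-g\|_r<\varepsilon/2$. By the triangle inequality, $\|f-g\|_r<\varepsilon$. Letting $\varepsilon$ run over $1/n$, or indexing over pairs $(i,\varepsilon)$, yields the desired net $(f_i)\subseteq C_c(E|_U)$.

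For the stated equality of sets, the inclusion $\supseteq$ uses the same continuity of evaluation. If $a$ is an $\|\cdot\|_r$-limit of elements $g_n\in C_c(E|_U)$, then $a(\gamma)=\lim g_n(\gamma)=0$ for $\gamma\notin U$. Since $\supp$ is defined as the set where the section is nonzero, this gives $\supp(a)\subseteq U$. The inclusion $\subseteq$ is the approximation just proved.

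The main point requiring care is the identification of $M_{h_i}(f)$ with the pointwise product in $C_0(E)$, together with the compatibility of the embedding $H^{2,L}(E)\to C_r^*(E)$ from Proposition~\ref{prop:RD-gives-continuous-inclusion} with these pointwise realizations. Both are needed so that Proposition~\ref{localschwartz-r}, which is phrased for elements of $H^{2,L}(E)$ viewed in $C_0(E)$, applies to the same object whose $r$-norm distance to $f$ we control. Both identifications rest on injectivity of the canonical map $C_r^*(E)\to C_0(E)$ and on the fact that all the relevant norms dominate $\|\cdot\|_\infty$. Lemma~\ref{lemma:multiplier-into-schwartz} already records these facts.
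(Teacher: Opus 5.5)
Your proposal is correct and follows the paper's proof essentially step for step: Proposition~\ref{hap} gives $M_{h_i}(f)\to f$, Lemma~\ref{lemma:multiplier-into-schwartz} puts $M_{h_i}(f)$ in $H^{2,L}(E)$, and you then use the support inclusion, Proposition~\ref{localschwartz-r} and the triangle inequality, with the reverse inclusion coming from continuity of evaluation. Choosing a single index $i$ for each $\varepsilon=1/n$ is, if anything, slightly cleaner than the paper's pairing of the net $(h_i)$ with an auxiliary net $\varepsilon_i\to 0$.
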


\begin{proof}
Let \(f\in C_r^*(E)\) satisfy \(\supp(f)\subseteq U\).

By Proposition~\ref{hap},
\[
\|M_{h_i}(f)-f\|_r\to 0.
\]
For each \(i\), Lemma~\ref{lemma:multiplier-into-schwartz} gives
\[
M_{h_i}(f)\in H^{2,L}(E),
\]
and clearly
\[
\supp(M_{h_i}(f))\subseteq \supp(f)\subseteq U.
\]

Applying Proposition~\ref{localschwartz-r} to \(M_{h_i}(f)\), we obtain that for
every \(\varepsilon>0\) there exists \(g_{i,\varepsilon}\in C_c(E|_U)\) such that
\[
\|M_{h_i}(f)-g_{i,\varepsilon}\|_r<\varepsilon.
\]

Choose a net \((\varepsilon_i)\) with \(\varepsilon_i\to 0\), and for each \(i\)
pick \(f_i\in C_c(E|_U)\) such that
\[
\|M_{h_i}(f)-f_i\|_r<\varepsilon_i.
\]
Then
\[
\|f_i-f\|_r
\le
\|f_i-M_{h_i}(f)\|_r+\|M_{h_i}(f)-f\|_r\to 0.
\]
Therefore there exists a net \((f_i)\subseteq C_c(E|_U)\) such that
\[
\|f_i-f\|_r\to 0.
\]

This proves
\[
\{a\in C_r^*(E):\supp(a)\subseteq U\}
\subseteq
\overline{C_c(E|_U)}^{\|\cdot\|_r}.
\]
The reverse inclusion is immediate, since every element of \(C_c(E|_U)\) has
support contained in \(U\), and this property is preserved under reduced-norm
limits inside \(C_0(E)\).
\end{proof}

\begin{corollary}\label{cor:localizability-via-negative-type}
The conclusion of Theorem~\ref{thm:localizability-via-rd} holds whenever
$E$ has the Rapid Decay Property with respect to a length function $L$, and
there exists a continuous locally proper negative type function
$\psi\colon G\to \bR$ such that, for
\[
h_t(\gamma):=e^{-t\psi(\gamma)},\qquad t>0,
\]
one has
\[
\sup_{\gamma\in G}|h_t(\gamma)|(1+L(\gamma))^p<\infty
\]
for every \(p>0\) and every \(t>0\).
\end{corollary}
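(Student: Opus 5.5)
The plan is to check that the family $(h_t)_{t>0}$, indexed as a net by $t\downarrow 0$, satisfies hypotheses (i)--(iii) of Theorem~\ref{thm:localizability-via-rd}, and then apply that theorem directly. Take the directed set $(0,\infty)$ with the reversed order, so that $t\to 0$, and set $h_t=e^{-t\psi}$.

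First, positivity and continuity. Each $h_t$ is continuous because $\psi$ is. By the Schoenberg-type remark preceding Proposition~\ref{hap}, each $h_t$ is positive definite on $G$. For (i), I need a uniform sup bound. Since $h_t$ is positive definite and $h_t|_{G^{(0)}}=1$, the $2\times 2$ positivity applied to $\{r(\gamma),\gamma\}$ in $G^{r(\gamma)}$ gives $|h_t(\gamma)|^2\le h_t(r(\gamma))h_t(s(\gamma))=1$. Alternatively, this also follows from $\psi\ge 0$, which holds by the standard argument for negative type functions with $\psi|_{G^{(0)}}=0$. Either way, $\|h_t\|_\infty\le 1$, so $c=1$ works.

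Second, uniform convergence on compacta. For a compact $K\subseteq G$, $\psi$ is bounded on $K$, say $|\psi|\le M_K$. Then
\[
\sup_{\gamma\in K}|1-e^{-t\psi(\gamma)}|\le e^{tM_K}-1\to 0 \quad\text{as } t\downarrow 0,
\]
which is (ii). Local properness is not needed here; it is only what makes the decay hypothesis plausible. Hypothesis (iii) is exactly the standing assumption $\sup_\gamma |h_t(\gamma)|(1+L(\gamma))^p<\infty$ for all $p,t>0$.

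With (i)--(iii) verified, Theorem~\ref{thm:localizability-via-rd} yields, for every open $U$ and every $f\in C_r^*(E)$ with $\supp(f)\subseteq U$, a net in $C_c(E|_U)$ converging to $f$ in $\|\cdot\|_r$.

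The only point requiring any care is the uniform bound $\|h_t\|_\infty\le 1$. I would derive it from positive definiteness on the pair $\{r(\gamma),\gamma\}$ rather than relying on a separate nonnegativity lemma for $\psi$.
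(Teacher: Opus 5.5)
Your proposal is correct and follows the paper's own route: use Schoenberg's theorem to see that $(h_t)_{t>0}$, directed by $t\downarrow 0$, satisfies hypotheses (i)--(iii) of Theorem~\ref{thm:localizability-via-rd}, then apply that theorem. Two of your points are valid refinements: the explicit bound $\|h_t\|_\infty\le 1$ from $2\times 2$ positivity supplies hypothesis (i), which the paper leaves implicit, and you correctly note that continuity of $\psi$ alone gives (ii), so local properness is not needed there.
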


\begin{proof}
Since \(\psi\) is of negative type, Schoenberg's theorem implies that each
\[
h_t(\gamma)=e^{-t\psi(\gamma)}
\]
is continuous and positive definite on \(G\).
Since \(\psi\) is locally proper and vanishes on \(G^{(0)}\), one has
\(h_t\to 1\) uniformly on compact subsets of \(G\) as \(t\downarrow 0\).

By assumption,
\[
\sup_{\gamma\in G}|h_t(\gamma)|(1+L(\gamma))^p<\infty
\]
for every \(p>0\) and every \(t>0\).
Thus the family \((h_t)_{t>0}\) satisfies the hypotheses of
Theorem~\ref{thm:localizability-via-rd}, and the conclusion follows.
\end{proof}


\end{document}